\theoremstyle{plain}
\newtheorem{theorem}{Theorem}[section]
\newtheorem{lemma}[theorem]{Lemma}
\newtheorem{corollary}[theorem]{Corollary}
\newtheorem{proposition}[theorem]{Proposition}
\newtheorem{prop}[theorem]{Proposition}
\newtheorem{prop-defn}[theorem]{Proposition-Definition}
\newtheorem{claim}[theorem]{Claim}
\newtheorem*{theorem:main}{Main Theorem} 
\theoremstyle{definition}
\newtheorem{definition}[theorem]{Definition}
\newtheorem{remark}[theorem]{Remark}
\newtheorem*{remark*}{Remark}
\newtheorem*{remarks*}{Remarks}
\newtheorem{example}[theorem]{Example}
\newcommand{\Mod}{\mathrm{Mod}}
\newcommand{\diam}{\mathrm{diam}}
\newcommand{\Teich}{\mathcal T}
\newcommand{\PSL}{\rm PSL}
\newcommand{\PGL}{\rm PGL}
\newcommand{\CB}{{\mathcal B}}
\newcommand{\bTheta}{{\bf\Theta}}
\newcommand{\CP}{{\mathcal P}}
\newcommand{\vtx}{\mathcal{V}}
\newcommand{\Homeo}{\rm Homeo}
\newcommand{\SL}{\rm SL}
\newcommand{\SO}{\rm SO}
\newcommand{\length}{\mathrm{length}}
\DeclareMathOperator{\QI}{QI}
\DeclareMathOperator{\Isom}{Isom}
\newcommand{\fib}{{\mathrm{fib}}}
\newcommand{\SAff}{{\rm SAff}}
\newcommand{\OO}{{\rm O}}
\newcommand{\sdl}{\sigma}
\title{Extensions of Veech groups I: A hyperbolic action}
\author[Dowdall]{Spencer Dowdall}
	\address{Department of Mathematics, Vanderbilt University, Nashville, TN}
	\email{spencer.dowdall@vanderbilt.edu}
\author[Durham]{Matthew G. Durham}
	\address{Department of Mathematics, University of California, Riverside, CA}
	\email{mdurham@ucr.edu}
\author[Leininger]{Christopher J. Leininger}
	\address{Department of Mathematics, Rice University, Houston, TX}
	\email{cjl12@rice.edu}
\author[Sisto]{Alessandro Sisto}
	\address{Maxwell Institute and Department of Mathematics, Heriot-Watt University, Edinburgh, UK}
	\email{a.sisto@hw.ac.uk}
\begin{document}

\begin{abstract} Given a lattice Veech group in the mapping class group of a closed surface $S$, this paper investigates the geometry of $\Gamma$, the associated $\pi_1S$--extension group.  We prove that $\Gamma$ is the fundamental group of a bundle with a singular Euclidean-by-hyperbolic geometry.  Our main result is that collapsing ``obvious'' product regions of the universal cover produces an action of $\Gamma$ on a hyperbolic space, retaining most of the geometry of $\Gamma$.  This action is a key ingredient in the sequel where we show that $\Gamma$ is hierarchically hyperbolic and quasi-isometrically rigid.
\end{abstract}

\maketitle


\section{Introduction}

Let $S$ be a closed, connected, oriented surface of genus at least $2$.  For any group $G$, a $\pi_1S$--extension of $G$ is a group $\Gamma$ fitting into a short exact sequence:
\[ 1 \to \pi_1S \to \Gamma \to G \to 1.\]
Such a group $\Gamma$ is the fundamental group of an $S$--bundle and is determined up to isomorphism by its
monodromy homomorphism $G\to \Mod^{\pm}(S)\cong\mathrm{Out}(\pi_1S)$ to the extended mapping class group; see e.g.~\cite{Morita,SalTsh}. 
An important problem is to decide how geometric properties of $\Gamma$ (or the bundle) are reflected in properties of the monodromy.

A motivating success story for this line of inquiry is Thurston's geometrization theorem (see e.g.~\cite{Otal-ThurstonHyp}), stating that when $G \cong \mathbb Z$, the associated surface bundle over the circle admits a hyperbolic structure if and only if the image of the monodromy is generated by a pseudo-Anosov mapping class.  A generalization of this to the world of coarse geometry states that a $\pi_1S$--extension of any group $G$ is {\em word-hyperbolic} if and only if the monodromy has finite kernel and its image in $\Mod(S)$ is {\em convex cocompact}  in the sense of Farb and Mosher (see \cite{FM:CC,hamenstadt:WHSG,MjSardar}).

In that spirit, this paper initiates an analysis of the geometry of extension groups of \emph{lattice Veech subgroups} of the mapping class group. 
There are two 
important 
perspectives motivating this analysis.
Firstly, in the classical setting of Kleinian groups, convex cocompactness is a special case of {\em geometric finiteness}.
While there is as yet no precise analogue of this latter notion in the context of mapping class groups---nor any proposal for how such a notion should relate, as in the case of convex cocompactness, to the (coarse) geometry of associated extension groups---it is evident that lattice Veech groups should qualify as geometrically finite with respect to {\em any} definition  (c.f.~Mosher \cite[\S6]{Mosher:Problems}).
Taking these as prototypes for geometric finiteness suggests that their extension groups should model compelling geometric features. Our main theorem shows this is indeed the case,
in that the extension  becomes hyperbolic after collapsing naturally occurring \emph{vertex subgroups} that are virtually isomorphic to fundamental groups of Seifert manifolds; see \S\ref{sec:bundle_metrics}.

\begin{theorem} \label{T:main full} Suppose $G<\Mod(S)$ is a lattice Veech group with extension group $\Gamma$ and let $\Upsilon_1,\ldots,\Upsilon_k < \Gamma$ be representatives of the conjugacy classes of vertex subgroups. 
Then $\Gamma$ admits an isometric action on a hyperbolic space $\hat E$, quasi-isometric to the Cayley graph of $\Gamma$ coned off along the cosets of $\Upsilon_1,\ldots,\Upsilon_k$.  Furthermore, every element not conjugate into one of $\Upsilon_1,\ldots,\Upsilon_k$ acts loxodromically on $\hat E$.
\end{theorem}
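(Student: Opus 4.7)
The plan is to prove the statement in four stages: construct $\hat E$, establish its hyperbolicity, assemble the HHS structure, and classify loxodromic elements. The backbone is the combinatorial HHS criterion referenced in Section~\ref{S:combinatorial HHS}, while the hyperbolicity of $\hat E$ requires a standalone combination argument. To construct $\hat E$, note that $\Gamma$ acts cocompactly on $\bar E$ and each boundary component is the universal cover of a graph manifold whose Seifert pieces are stabilized by conjugates of $\Upsilon_1,\ldots,\Upsilon_k$; coning off the preimages of these Seifert pieces in $\bar E$ is quasi-isometric to coning off the $\Upsilon_i$-cosets in a Cayley graph of $\Gamma$, and the resulting space inherits a natural isometric $\Gamma$-action.

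The most technically delicate step is proving that $\hat E$ is hyperbolic. Away from the boundary multi-twist cusps the geometry of $\bar E$ is controlled by the Teichm\"uller disk of the Veech group $G$, which is isometric to $\mathbb H^2$; the surface fibers map uniformly coarsely into the curve graph along this disk, and lifts of Teichm\"uller geodesics pull back to quasi-geodesics in $\bar E$. Near each boundary component, the graph-manifold HHS structure of \cite{BHS:HHS2} has the Bass--Serre tree as its $\nest$--maximal hyperbolic space, and coning off the Seifert pieces reduces the boundary contribution to a quasi-tree. I would combine these two hyperbolic layers via a combination theorem in the style of Mj--Sardar for metric bundles, or equivalently verify Bowditch's criterion for relative hyperbolicity of $\Gamma$ relative to $\{\Upsilon_i\}$ and then invoke that the coned-off Cayley graph of a relatively hyperbolic group is hyperbolic. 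The key technical input is a cone-off lemma showing that electrified geodesics passing through a cusp have uniformly thin triangles once the Seifert pieces have been coned off.

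To assemble the combinatorial HHS structure, the index set takes $\hat E$ as its $\nest$--maximal element; for each vertex in $\vtx$ one adjoins a domain whose associated hyperbolic space is either a quasi-line (for the annular multi-twist pieces, coming from the Seifert fiber direction) or a quasi-tree (for the universal covers of the Seifert base orbifolds). Nesting, orthogonality, and transversality within a single boundary component are inherited from the graph-manifold HHS structure, and every non-maximal domain nests into $\hat E$. Verification of the combinatorial HHS axioms reduces to a case analysis of the simplices of a suitable nerve, which is exactly what Lemmas~\ref{L: empty simplex case}--\ref{L:remaining simplices} encode; in particular the identification of each non-maximal $\C(W)$ as a quasi-tree comes out of this case analysis.

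The loxodromic classification then follows from a dichotomy: any $\gamma\in\Gamma$ whose image in $G$ is parabolic stabilizes a boundary component of $\bar E$ and hence is conjugate into some $\Upsilon_i$, while any other $\gamma$ projects to a hyperbolic isometry of $\mathbb H^2$, and a ping-pong argument using the $\Gamma$-action on $\hat E$ promotes this to positive asymptotic translation length. The main obstacle I anticipate is the hyperbolicity of $\hat E$: the Teichm\"uller-disk interior geometry and the graph-manifold boundary geometry interact delicately, and while coning off the Seifert pieces plainly eliminates the Seifert-fiber distortion, one must confirm this is the only obstruction to hyperbolicity in $\bar E$. The lattice assumption on $G$ enters crucially here, via finite-volume and cusp-thickness estimates for the Veech group's action on $\mathbb H^2$, which ensure the cone-off is uniform across all $\Gamma$-translates of boundary components.
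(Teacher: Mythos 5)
Your stage-3 assembly (the combinatorial HHS index set, with quasi-lines $\CK^v$ in the Seifert-fiber direction and quasi-trees $\bXi^v$ for the coned base orbifolds, verified via the case analysis of Lemmas~\ref{L: empty simplex case}--\ref{L:remaining simplices}) matches the paper. However, there are two genuine gaps. First, in stage 2 you propose to ``verify Bowditch's criterion for relative hyperbolicity of $\Gamma$ relative to $\{\Upsilon_i\}$ and then invoke that the coned-off Cayley graph of a relatively hyperbolic group is hyperbolic.'' This route cannot work: $\Gamma$ is \emph{not} relatively hyperbolic with respect to the vertex subgroups, because two vertex subgroups corresponding to adjacent vertices of the same Bass--Serre tree $T_\alpha$ intersect in a $\mathbb{Z}^2$ (the edge stabilizer coming from a JSJ torus), so almost malnormality of the peripheral collection fails. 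That is precisely why the paper develops only an \emph{analogue} of Bowditch's fineness condition inside the combinatorial HHS machinery (\S\ref{S:projections}), and why the hyperbolicity of $\hat E$ is established by a bespoke argument: the Guessing-geodesics criterion (Proposition~\ref{T:bowditch}) applied to explicitly constructed preferred paths, with the heavy lifting done by flat-geometry inputs (the Euclidean triangle area bound, the balance-point ordering in Claim~\ref{claim:fan_hoiz_balancpts}, and the fan/pivot decomposition). A Mj--Sardar-style combination theorem also does not apply out of the box, since $\hat E$ is not a metric bundle with uniformly hyperbolic fibers --- over cusp points the ``fibers'' have collapsed to trees.

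Second, your loxodromic dichotomy is incorrect. You assert that any $\gamma\in\Gamma$ whose image in $G$ is parabolic is conjugate into some $\Upsilon_i$. In fact such a $\gamma$ need only preserve the tree $T_\alpha$, not fix a vertex of it; those $\gamma$ that act loxodromically on $T_\alpha$ are not conjugate into any vertex subgroup yet have parabolic image in $G$, and they are precisely a nontrivial class of loxodromic isometries of $\hat E$. Your dichotomy also omits elements with trivial image in $G$ --- i.e.\ elements of $\pi_1 S$ --- which generically fix no vertex of any $T_\alpha$ and must still be shown loxodromic on $\hat E$. Proposition~\ref{prop:loxodromics} addresses exactly these two cases via a Gauss--Bonnet/strip-crossing bound (Claim~\ref{claim:loxo-spine-diam-bound}) controlling how many strips a spine can traverse; a ping-pong argument is not supplied by your proposal and is not obviously available before hyperbolicity of $\hat E$ and the projection estimates of \S\ref{S:projections} are in place.
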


The hyperbolic space $\hat E$ is constructed from bundle $E$ on which $\Gamma$ acts nicely as described below (and in particular, it has more structure than the coned-off Cayley graph); see Definition~\ref{defn:hatE}.

In the sequel \cite{DDLSII}, we further analyze how the coned-off cosets interact with each other and  prove that $\Gamma$ admits a hierarchically hyperbolic structure. In fact, we show that this structure is $\Gamma$ invariant, thus proving the following.

\begin{theorem}[\cite{DDLSII}] \label{T:main simple} For any lattice Veech group $G < \Mod(S)$, the extension group $\Gamma$ of $G$ admits the structure of a hierarchically hyperbolic group for which the maximal hyperbolic space is $\hat E$.
\end{theorem}

We note that Theorem \ref{T:main full} is a key step in the proof of Theorem \ref{T:main simple} and refer the reader to our second paper \cite{DDLSII} for a detailed description of the hierarchically hyperbolic group structure, a general discussion of geometric finiteness in mapping class groups, and several applications of Theorems~\ref{T:main full}--\ref{T:main simple}.
One such application, which we mention here to highlight the significance of $\hat E$, is that the action of the extension group $\Gamma$ on $\hat E$ is a universal acylindrical action. 
We also remark that while the characterization of loxodromics in Theorem~\ref{T:main full} can formally be deduced from  Theorem~\ref{T:main simple},  we also give a direct elementary proof  here in \S\ref{S:loxodromics}.

A second motivation for studying extensions of lattice Veech groups is that their associated $S$--bundles naturally carry a $4$-dimensional singular Euclidean-by-hyperbolic geometry in the sense of Thurston \cite{Thurston-book}. 
This is similar to the fact, 
which has been used to great effect (e.g., \cite{Bowditch-stacks,CannonThurston,Kozai-hyp_structures,Minsky:rigidity-limit_sets,Mosher:stable-teich-geods})  in the study of surface bundles and $3$-manifolds,
that hyperbolic plane bundles over Teichm\"uller geodesics carry singular {\bf Sol} geometries. Indeed, as explained in \S\ref{S:rel_to_SolV}, the 4-dimensional geometry exhibited here can be viewed as a direct generalization of this 3-dimensional {\bf Sol} phenomenon.

This  perspective is central to our entire approach.
The proof of Theorem~\ref{T:main full} involves the construction of a bundle $E$ on which the extension group $\Gamma$ of a Veech group $G$ acts geometrically by bundle automorphisms, together with a detailed analysis of the geometry of $E$.  The bundle $E$ is derived from the two interpretations of $G$ as simultaneously the affine group of a Euclidean cone metric on $S$ and as the stabilizer of an isometrically embedded hyperbolic plane in Teichm\"uller space (the associated {\em Teichm\"uller disk}); see \S\ref{S:Veech groups} and \S\ref{sec:bundles-total-space} for definitions. 
Away from a codimension 2 singular locus, $E$ is modeled on the homogeneous Euclidean-by-hyperbolic geometry mentioned above; in \S\ref{S:homogeneous} we consider this geometry more closely and prove the following:

\begin{theorem} \label{T:GXmanifold} Suppose $\Gamma$ is the extension group of a Veech group and $E/\Gamma$ is the associated bundle.  Away from the singular locus, $E/\Gamma$ is an $(\SAff^\pm(\mathbb R^2),\frak X)$--manifold, where $\frak X = \SAff^\pm(\mathbb R^2)/K$ and $K \cong \OO(2)$ is a maximal compact subgroup.
\end{theorem}

The space $E$ fibers over a Teichm\"uller disk $D$ and the Veech group $G$ acts on $D$ as a finite co-area Fuchsian group.    This action is necessarily {\em not} cocompact, and truncating $D$ by removing a $G$--invariant family of horoballs gives a space $\bar D \subset D$ on which $G$ acts cocompactly.  The extension group $\Gamma$ acts cocompactly on the subbundle $\bar E \subset E$ over $\bar D$, and thus $\Gamma$ and $\bar E$ are quasi-isometric.  For simplicity, assume $\Gamma$ is torsion free.  Then the subbundle over a boundary component of $\bar D$ is naturally a copy of the universal cover of a graph manifold (in fact, this graph manifold is finitely covered by the mapping torus of a Dehn multi-twist).  The Seifert manifolds in the graph manifold have universal covers that are products which thus obstructs hyperbolicity of $\bar E$ (and hence $\Gamma$).  Via the quasi-isometry between $\bar E$ and $\Gamma$, these product spaces correspond to the cosets of the vertex groups, and the space $\hat E$ in Theorem~\ref{T:main full} is alternatively (coarsely) defined by coning off these products.  The theorem thus says that after doing so, one obtains a hyperbolic space.

We end this introduction with a final application of Theorem~\ref{T:main simple} and the analysis in \S\ref{S:homogeneous} of this paper, which serves to illustrate the naturality of the geometry of $\bar E$.
\begin{theorem} [\cite{DDLSII}] \label{T:QI rigidity} The homomorphism $\Isom(\bar E) \to \QI(\bar E) \cong \QI(\Gamma)$ is an isomorphism and $\Isom(\bar E)$ contains $\Gamma$ as a finite index subgroup.
\end{theorem}
\begin{remark} The space $\bar E$ depends on $\bar D$ whose definition includes choices (specifically, choices of horoballs in $D$ which are removed), and we remark that Theorem~\ref{T:QI rigidity} is in fact only true for a particular choice of $\bar E$.\end{remark}

\subsection{Outline and proofs}
Let us briefly outline the paper and comment on the main structure of the proofs. 
See the beginning of each section for more discussion. 
In \S\ref{S:Definitions} we review the necessary background material. Then in \S\ref{S:setup} we introduce all the objects and notation that will be used in the rest of the paper.  In particular, we define the space $E$ and its truncation $\bar E$, which is a quasi-isometric model for the extension group $\Gamma$ of a Veech group $G$. This section also describes some of the key geometric features and subspaces of $E$ and $\bar E$, and introduces the (coned-off) space $\hat E$.

In \S\ref{S:slim triangles} we prove the main theorem, showing that $\hat E$ is hyperbolic, and classify the elements of $\Gamma$ acting loxodromically.
Roughly, $\hat E$ is something like a bundle over a coned-off Teichm\"uller disk, where all fibers are objects related to flat geometry on surfaces. More precisely, each fiber is either the universal cover of $S$ with a flat metric, or a tree dual to a certain foliation (combinatorially, these are the Bass-Serre trees dual to the JSJ splittings of the graph manifolds, or more generally, graph orbifolds, mentioned above, explaining why the subgroups in Theorem~\ref{T:main full} are called vertex groups).  To prove hyperbolicity, we exploit features of flat geometry to construct explicit paths that form thin triangles, and then use the ``Guessing geodesics'' criterion (Proposition~\ref{T:bowditch}) due to Masur--Schleimer \cite{MS:disks} and Bowditch \cite{Bowditch:Hyp}. The proofs involve a careful analysis of various types of triangles in the universal cover of a flat surface (which can be more complicated than one might expect).  
This section ends with \S\ref{S:loxodromics} where we prove that any element not conjugate into a vertex subgroup acts loxodromically on $\hat E$.  For this, we quickly reduce to the case that the element projected to the Veech group $G$ is not loxodromic, and hence up to a power, fixes one of the trees mentioned above.  Not being in a vertex stabilizer, the element has an axis in the tree, and the proof is completed by constructing an equivariant lipschitz retraction from $\hat E$ to the axis in the tree.

The final section, \S\ref{S:homogeneous}, shifts gears to describe the four-dimensional Thurston-type geometry on which the nonsingular part of $E$ and $\bar E$ are modeled.  After describing this geometry, we briefly discuss the connection between the geometry of $E$ and $\bar E$, and singular {\bf Sol} metrics on fibered hyperbolic $3$--manifolds.  Some of the results in this section will be important in the sequel \cite{DDLSII}. We also note that it is the similarity between this singular {\bf Sol} geometry and the geometry of $E$ that motivated our ``geodesic guesses" in the proof of hyperbolicity of $\hat E$.

\subsection{Notation and guides} This paper contains notation that is used throughout.  For the reader's convenience, we have included both a Notation Index \S\ref{S:notation} as well as a detailed diagram illustrating the key objects and maps \S\ref{S:diagram section}.

\bigskip

\noindent
{\bf Acknowledgments.}  The authors would like to thank MSRI and its Fall 2016 program on {\em Geometric Group Theory}, where this work began. We also gratefully acknowledge NSF grants DMS 1107452, 1107263, 1107367 (the GEAR Network) for supporting travel related to this project.
Dowdall was partially supported by NSF grants DMS-1711089 and DMS-2005368. Durham was partially supported by NSF grant DMS-1906487.  Leininger was partially supported by NSF grants DMS-1510034, DMS-1811518, and DMS-2106419.
Sisto  was partially supported by the Swiss National  Science Foundation (grant \#182186).  The authors would also like to thank the anonymous referees for many helpful suggestions.

\section{Definitions and background} \label{S:Definitions}

We briefly recall some essential background material on coarse geometry, mapping class groups, Teichm\"uller spaces, quadratic differentials and Veech groups. As we expect that most readers will already be familiar with this material, we largely defer to other sources for details.

\subsection{Basic metric geometry}
A \emph{path} in a metric space $(X,d)$ is a continuous function $\gamma\colon [a,b]\to X$ from some (possibly degenerate) compact interval in the real line. Such a path \emph{connects} or \emph{joins} its endpoints $\gamma(a),\gamma(b)$ and has \emph{length}
\[\length(\gamma) = \sup \left\{\sum_{i=1}^{n} d(\gamma(t_{i-1}),\gamma(t_{i})) \mid a=t_0\leq \dots\leq t_n=b \right\}.\]
A path is \emph{rectifiable} its its length is finite, and is a \emph{geodesic} if its length equals the distance between its endpoints. The metric space is said to be geodesic if each pair of points in it are connected by a geodesic. It is customary to denote by $[x,y]$ any choice of a geodesic connecting $x$ to $y$. We will often conflate geodesics with their images, and same for paths.

A subset $Z\subset X$ is \emph{rectifiably path connected} if any pair of points in $Z$ are connected by a rectifiable path $[a,b]\to Z$. In this case $Z$ inherits an \emph{induced path metric} where the distance between points $z_1,z_2\in Z$ is defined as the infimum of the lengths of all paths in $Z$ connecting $z_1$ and $z_2$.
We call $d$ a \emph{path metric} if it agrees with the induced path metric on $X$ itself; that is, if for all $x,y$ the distance $d(x,y)$ coincides with the infimum of the lengths of paths connecting $x$ to $y$.

Given two metrics $d$ and $d'$ on a set $X$, we say that $d$ is {\em coarsely bounded} by $d'$ if there exists a monotone function $N \colon [0,\infty) \to [0,\infty)$ so that $d(x,y) \leq N(d'(x,y))$, for all $x,y \in X$.  If $d$ is coarsely bounded by $d'$ and $d'$ is coarsely bounded by $d$, we say that $d$ and $d'$ are {\em coarsely equivalent}.

To any (set-theoretic) quotient $\pi\colon X\to Q$ of the metric space $(X,d)$, there is an associated \emph{quotient pseudo-metric} $d_Q$ on $Q$ defined as
\[d_Q(p,q)=\inf\sum_{i=1}^n d(x_i,y_i),\]
where the infimum is over all chains $x_1,y_1,\dots, x_n,y_n\in X$ satisfying $\pi(x_1)=p$, $\pi(x_n)=q$, and $\pi(y_i)=\pi(x_{i+1})$ for all $i=1,\dots,n-1$; see \cite[I.5.19]{BH:NPC}. Call such a chain \emph{tight} if $y_i \neq x_{i+1}$ for each $1\le i < n$. Observe that if a chain is not tight because $y_i = x_{i+1}$, then $ d(x_i, y_{i+1}) \le  d(x_i, y_i) +  d(x_{i+1},y_{i+1})$ and we may delete $y_{i},x_{i+1}$ to get a new chain with potentially smaller sum. Thus $d_Q(p,q)$ is in fact an infimum over tight chains.

\subsection{Graph approximation}
The $C \ge 0$ neighborhood of a subset $A$ of a metric space $(X,d)$ will be denoted
\[N_C(A) = \{x\in X \mid d(x,a) \le C\text{ for some }a\in A\}.\]
We say that $A$ is \emph{$C$--dense} if $X = N_C(A)$. A \emph{quasi-isometric embedding} $f\colon X\to Y$ is a map of metric spaces for which there exist constants $K\geq 1,C\geq 0$ such that 
\[\frac{1}{K}d_Y(f(x),f(y)) -C \leq d_X(x,y)\leq Kd_Y(f(x),f(y))+C\]
for all $x,y\in X$. 
The map is furthermore a \emph{quasi-isometry} if it is coarsely surjective, meaning that $f(X)$ is $R$--dense in $Y$ for some $R\ge 0$. 
It is well known that every quasi-isometry $f\colon X\to Y$ admits a \emph{coarse inverse} quasi-isometry $g\colon Y\to X$ for which $g\circ f$ and $f\circ g$ are bounded distance from the identities on $X$ and $Y$. A quasi-isometric embedding with domain an interval in $\mathbb{R}$ or $\mathbb{Z}$ is called a \emph{quasi-geodesic}.

The following well-known fact will be useful in a few places.

\begin{prop}\label{P:graph approximation}
Suppose $\Omega$ is a path metric space and $\Upsilon \subset \Omega$ an $R$--dense subset, where $R > 0$. Fix $R'> 3R$ and consider a graph $\mathcal G$ with vertex set $\Upsilon$ such that:
  \begin{itemize}
   \item all pairs of elements of $\Upsilon$ within distance $3R$ are joined by an edge in $\mathcal G$,
   \item if an edge in $\mathcal G$ joins points $w,w'\in \Upsilon$, then $d_\Omega(w,w') \le R'$.
  \end{itemize}
 Then the inclusion of $\Upsilon$ into $\Omega$ extends to a quasi-isometry $\mathcal G \to \Omega$.
\end{prop}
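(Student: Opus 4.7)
The plan is to explicitly extend the inclusion $\Upsilon \hookrightarrow \Omega$ to a map $f\colon \mathcal G \to \Omega$ and verify the two-sided linear distance bounds. For each edge $e = \{w,w'\}$ of $\mathcal G$, the hypothesis $d_\Omega(w,w') \le R'$ together with $\Omega$ being a path metric space gives a rectifiable path $\sigma_e$ in $\Omega$ from $w$ to $w'$ of length at most $R'+1$. Metrizing each edge as a unit interval and using $\sigma_e$ to parameterize the image, we obtain an extension $f\colon \mathcal G \to \Omega$. Coarse surjectivity is immediate since $f(\mathcal G) \supset \Upsilon$ is $R$--dense.

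The upper bound in $\Omega$ is the easy direction. An edge-path of combinatorial length $n$ in $\mathcal G$ maps to a concatenation of $n$ arcs $\sigma_e$ of total length at most $n(R'+1)$, which gives an estimate of the form $d_\Omega(f(x),f(y)) \le (R'+1)(d_{\mathcal G}(x,y)+1)$ for all $x,y\in\mathcal G$, where the extra unit absorbs at-most-one partial edge at each end.

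The lower bound is the heart of the argument and is precisely where both hypotheses on $\mathcal G$ are used. Given vertices $w,w'\in\Upsilon$ with $D = d_\Omega(w,w')$, choose a path $\gamma$ in $\Omega$ from $w$ to $w'$ of length at most $D+1$ and, via arc-length, select successive points $p_0=w,p_1,\dots,p_n=w'$ on $\gamma$ with $d_\Omega(p_{i-1},p_i)\le R$ and $n\le (D+1)/R+1$. Apply $R$--density to each intermediate $p_i$ to obtain some $u_i\in\Upsilon$ with $d_\Omega(p_i,u_i)\le R$, and set $u_0=w$, $u_n=w'$. The triangle inequality gives $d_\Omega(u_{i-1},u_i)\le 3R$, so the first hypothesis on $\mathcal G$ joins consecutive $u_i$ by edges, yielding $d_{\mathcal G}(w,w')\le n \le D/R + 2$. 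Combining these estimates on vertices and noting every point of $\mathcal G$ lies within combinatorial distance $1/2$ of a vertex, $f$ is a quasi-isometry with constants depending only on $R$ and $R'$. There is no real obstacle here: this is a standard net/covering argument, with the two hypotheses on $\mathcal G$ calibrated exactly so that both directions of the quasi-isometry inequality go through.
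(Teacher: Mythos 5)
Your proof is correct and follows essentially the same approach as the paper: extend the inclusion over edges via rectifiable paths, get the Lipschitz upper bound from concatenation, and get the lower bound by subdividing a nearly-optimal path into arcs of length $\le R$ and applying $R$--density to obtain a chain of $\Upsilon$--points at pairwise distance $\le 3R$. The only (favorable) difference is that you allow edge paths of length $R'+1$ rather than $R'$, correctly accounting for the infimum in the path-metric possibly not being attained.
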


\begin{proof}
We regard $\Upsilon$ as a subset of both $\Omega$ and $\mathcal G$, where we denote the respective metrics by $d_\Omega$, $d_{\mathcal G}$.
Let $f \colon \mathcal G \to \Omega$ be any map that includes the vertex set $\Upsilon$ into $\Omega$ and sends each edge to a constant speed path connecting the endpoints of length at most $R'$.  The map $f$ is clearly $R'$--Lipschitz and the image is $R$--dense.  

To prove the remaining inequality, consider now arbitrary vertices $w,w'\in \Upsilon$ of $\mathcal G$. 
Let $\alpha$ be a path connecting $w$ to $w'$ in $\Omega$ of length $Rk\leq d_\Omega(w,w')+R$ for some integer $k$. Subdividing $\alpha$ into subpaths of length $R$ and considering points of $\Omega$ within distance $R$ of the endpoints of the subpaths, we can find a sequence of points $w=w_0,\dots,w_k=w'$ of $\Upsilon$ so that $d_\Omega(w_i,w_{i+1})\leq 3R$. In particular,
$$d_{\mathcal G}(w,w')\leq k\leq d_\Omega(w,w')/R+1,$$
and the proof is complete.
\end{proof}

\subsection{Hyperbolicity}
A metric space $(X,d)$ is called Gromov \emph{hyperbolic} if there is a constant $\delta' \ge 0$ such that for all $x,y,z,w\in X$ one has
\[(x \vert z)_w \ge \min\big\{(x\vert y)_w, (y\vert z)_w\big\} - \delta',\]
where $(a \vert b)_c$ denotes $(d(a,c) + d(b,c) - d(a,b))/2$. In the case that $X$ is geodesic, this is equivalent to the existence of a constant $\delta\geq 0$ so that for all $x,y,z\in X$ and all choices of geodesics $[x,y],[y,z],[z,x]$ we have
\[[x,y]\subseteq N_\delta\big([y,z]\cup [z,x]\big).\]
When we want to specify the constant $\delta$, we say that the space $X$ is $\delta$--hyperbolic. We call $\delta$ a hyperbolicity constant for $X$.
It is a standard result that the property of being hyperbolic is preserved by quasi-isometries of path metric spaces; see e.g., \cite{vaisala-hyperbolic}.
It is also well-known that the hyperbolic plane $\mathbb H^2$ is, say, $2$--hyperbolic.

We will need the following criterion for hyperbolicity, which is an easy modification of one due to Masur--Schleimer \cite[Theorem 3.11]{MS:disks}, see also \cite[Proposition~3.1]{Bowditch:Hyp}.
For the statement, 
given a metric space $\Omega$ and designated subsets $L(x,y)\subseteq \Omega$ for each pair $x,y\in \Omega$, we say that the subsets $L(x,y)$ \emph{form $\delta$--slim triangles} if for all $x,y,z\in\Omega$ we have $L(x,y)\subseteq N_\delta(L(x,z)\cup L(z,y))$. So, by definition, a geodesic metric space is hyperbolic if the set of geodesics form slim triangles.

\begin{prop}[Guessing geodesics]
\label{T:bowditch} Suppose $\Omega$ is a path metric space, $\Upsilon \subset \Omega$ an $R$--dense subset for some $R > 0$, and $\delta \ge 0$ a constant such that for all pairs $x,y \in \Upsilon$ there are rectifiably path-connected sets $L(x,y) \subset \Omega$ containing $x,y$ satisfying the properties:
\begin{enumerate}
\item the $L(x,y)$ form $\delta$--slim triangles, and
\item if $x,y \in \Upsilon$ have $d(x,y)\le 3R$, then the diameter of $L(x,y)$ is at most $\delta$.
\end{enumerate}
Then there exists $\delta'$, depending only on $R$ and $\delta$, so that $\Omega$ is $\delta'$--hyperbolic.  Moreover, any geodesic connecting $x,y$ has Hausdorff distance at most $\delta'$ to $L(x,y)$.
\end{prop}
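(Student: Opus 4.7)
The plan is to use Proposition~\ref{P:graph approximation} to reduce to the case of a graph, and then invoke the original Masur--Schleimer/Bowditch criterion in that (purely combinatorial) setting. Concretely, I would first choose some $R' > 3R$ and apply Proposition~\ref{P:graph approximation} to build a graph $\mathcal G$ on the vertex set $\Upsilon$ that is quasi-isometric to $\Omega$ via the inclusion. Because hyperbolicity is preserved by quasi-isometries of path metric spaces, it then suffices to prove that $\mathcal G$ is hyperbolic.

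For each pair $x,y\in\Upsilon$, I would transfer the guessing-geodesic data from $\Omega$ to a discrete analog $L'(x,y) = N_R(L(x,y)) \cap \Upsilon \subset \mathcal G$. The $R$--density of $\Upsilon$ together with the rectifiable path-connectivity of $L(x,y)$ makes $L'(x,y)$ a connected subset of $\mathcal G$ containing $x,y$: any rectifiable path in $L(x,y)$ can be subdivided into segments of length at most $R$, and selecting a $\Upsilon$--vertex near each subdivision point produces a chain of vertices lying in $L'(x,y)$ with consecutive ones adjacent in $\mathcal G$. Furthermore, (i) the $\delta$--slim triangle hypothesis in $\Omega$ transfers to $L'(x,y) \subset N_{\delta'}(L'(x,z) \cup L'(z,y))$ in $\mathcal G$ for a constant $\delta'$ depending only on $\delta, R, R'$ and the quasi-isometry constants, and (ii) the bounded-diameter hypothesis for nearby pairs transfers to a uniform $\mathcal G$--diameter bound on $L'(x,y)$ whenever $d_{\mathcal G}(x,y)$ is bounded.

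With $\{L'(x,y)\}$ satisfying (i) and (ii), the graph-theoretic form of the guessing-geodesics criterion (as in \cite[Theorem 3.11]{MS:disks} or \cite[Proposition 3.1]{Bowditch:Hyp}) applies to conclude that $\mathcal G$, and hence $\Omega$, is hyperbolic. The main obstacle is the verification of (i): showing that slim triangles of the $L$'s in $\Omega$ yield slim triangles of the $L'$'s in $\mathcal G$ with a uniform constant independent of the chosen points requires careful control of the distortion between the $\Omega$-- and $\mathcal G$--metrics on $\Upsilon$, which is exactly the information provided by Proposition~\ref{P:graph approximation}. Once this bookkeeping is in place, the remainder of the argument follows the standard Masur--Schleimer/Bowditch proof with only cosmetic modifications to accommodate the path-metric hypothesis on $\Omega$.
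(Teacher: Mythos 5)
Your proposal is correct and follows essentially the same route as the paper's proof: build the graph $\mathcal G$ via Proposition~\ref{P:graph approximation}, transfer the sets $L(x,y)$ to discrete analogs on $\Upsilon$, verify connectivity by subdividing rectifiable paths into $R$--pieces, and invoke the graph version of Bowditch's criterion. The only cosmetic difference is that the paper fixes a choice function $g\colon\Omega\to\Upsilon$ (a coarse inverse to the inclusion) and uses the span of $g(L(x,y))$, whereas you take all of $N_R(L(x,y))\cap\Upsilon$; the two sets are within bounded Hausdorff distance and the argument goes through identically either way.
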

\begin{proof}  
For the special case of a graph $\mathcal G$ with $\Upsilon$ the $1$--dense set of vertices and the subsets $L(x,y)$ connected subgraphs, this statement is precisely the criterion of Bowditch \cite[Proposition~3.1]{Bowditch:Hyp}.
To translate this criterion for graphs into the condition we have here, we construct the graph $\mathcal G$ from Proposition~\ref{P:graph approximation} and prove that $\mathcal G$ is hyperbolic; this suffices since $\mathcal G$ is quasi-isometric to $\Omega$.

Let $g \colon \Omega \to \Upsilon$ be a map fixing $\Upsilon$ and assigning to $x\in \Omega$ some point of $\Upsilon$ within distance $R$ of $x$. Note that we will think of elements of $\Upsilon$ as contained both in $\mathcal G$ and in $\Omega$.
For $x,y \in \Upsilon$, let $\mathscr{L}(x,y)$ be the span of $g(L(x,y))$; that is, the largest subgraph whose vertex set is precisely $g(L(x,y))$.  Since $L(x,y)$ is rectifiably path connected, given any two points $w,w' \in L(x,y)$, there is a rectifiable path connecting them.  Partition this path into subpaths of length at most $R$, at points $w = w_0,w_1,\ldots,w_n = w'$.  Thus $d_\Omega(w_i,w_{i+1}) \leq R$, and so $d_\Omega(g(w_i),g(w_{i+1})) \leq 3R$, ensuring that $g(w_i)$ and $g(w_{i+1})$ are adjacent in $\mathcal G$.  Consequently, $w = w_0$ is connected by a path of length $n$ to $w' = w_n$ in $\mathcal G$ with all vertices in $g(L(x,y))$, and hence $\mathscr L(x,y)$ is connected.

Notice that $g$ is a quasi-isometry (since it is a coarse inverse of the quasi-isometry induced by inclusion from Proposition \ref{P:graph approximation}). Hence, since the $L(w,w')$ form $\delta$--slim triangles, we have that $\mathscr L(w,w')$ also form slim triangles (for a possibly larger constant).  Similarly, the bound on the diameter of the sets $L(w,w')$  implies a bound on the diameter of the sets $\mathscr L(w,w')$.  By the hyperbolicity criterion for graphs, it follows that $\mathcal G$, and hence $\Omega$, is hyperbolic.  The uniformity of the hyperbolicity constant and the final sentence of the proposition follow from the corresponding statements for $\mathcal G$.
\end{proof}

\subsection{Surfaces and mapping class groups} 
\label{S:Surface background}
\label{S:Mod S} 
We next describe  a variety of geometric, topological, and analytic objects associated to a closed surface $S$ of genus at least two. 
Firstly, we write $\Mod(S) = \pi_0(\Homeo^+(S))$ for its {\em mapping class group}, the group of components of its orientation preserving homeomorphisms.  Let $\dot S$ denote the surface $S$ with a marked point and $\Mod(\dot S)$ its mapping class group (in this case, all homeomorphisms must preserve the marked point).  Birman proved that by forgetting the marked point, these mapping class groups fit into the following {\em Birman exact sequence},
\[ 1 \to \pi_1S \to \Mod(\dot S) \to \Mod(S) \to 1.\]
Given a subgroup $G< \Mod(S)$, we let $\Gamma_G < \Mod(\dot S)$ denote the preimage in $\Mod(\dot S)$, which thus also fits into a short exact sequence 
\begin{equation} \label{E:extension group} 1 \to \pi_1S \to \Gamma_G \to G \to 1
\end{equation}
mapping into the one above by inclusion. See e.g. \cite{Birman-book,Birman-MCGs,farb:MCG}.

\subsection{Teichm\"uller spaces} \label{S:Teich spaces}  Two complex structures $X$ and $Y$ on $S$ are equivalent if there is a biholomorphic map $(S,X) \to (S,Y)$ isotopic to the identity on $S$.  The {\em Teichm\"uller space} of $S$, denoted $\Teich(S)$, is the space of equivalence classes of complex structures on $S$.  Given a complex structure $X$ on $S$, we also write $X \in \Teich(S)$ for its equivalence class.  We equip $\Teich(S)$ with the Teichm\"uller metric, for which the distance from $X$ to $Y$ measures the maximal quasiconformal dilatation between the two complex structures at every point, minimized over all representatives of the equivalence classes.  See e.g. \cite{Gardiner,Imayoshi-Taniguchi, Bers60}.

Any complex structure $X$ on $S$ also determines a point of $\Teich(\dot S)$, where equivalence is defined via isotopies that fix the marked point.
Forgetting the marked point defines a fibration of Teichm\"uller spaces called the {\em Bers fibration},
\begin{equation} \label{E:bers fiber}
\widetilde S \to \Teich(\dot S) \to \Teich(S).
\end{equation}
Changing the complex structure on $\dot S$ by an isotopy that does {\em not} fix the base point gives different points in the fiber over $X \in \Teich(S)$.  The isotopy to the identity lifts to an isotopy to the identity of the universal covering $\widetilde S$, and tracking the location of a fixed lift of the marked point gives the identification of the fiber with $\widetilde S$.  The fibration is equivariant with respect to the homomorphisms in the Birman exact sequence.
See \cite{Bers-fiber_spaces,FM:CC}.

\subsection{Quadratic differentials} \label{S:quad diffs} Given a complex structure $X$ on $S$, by a {\em quadratic differential for $X$} we mean a nonzero holomorphic section of the square of the canonical line bundle over $(S,X)$.  In a local coordinate $z$ for $X$, this is represented as $q(z) dz^2$, where $q$ is a holomorphic function.  Integrating the square root of a quadratic differential $q$ for $X$ (away from the zeros) gives a local coordinate $\zeta$ for $X$ called a {\em preferred coordinate for $q$} in which $q$ is represented as $d \zeta^2$.  The transition functions for overlapping preferred coordinates are locally given by $z \mapsto \pm z + c$ for some $c \in \mathbb C$.  The Euclidean metric, being invariant by such transformations, pulls back to a metric on the $S$ minus the zeros of $q$.  The metric completion with respect to this metric is a nonpositively curved Euclidean cone metric, called a {\em flat metric}, obtained by filling the zeros back in, so that there is a cone point of cone angle $k\pi$ at each zero of $q$ of order $k-2$.  We will write $q$ for both the quadratic differential as well as the associated flat metric.  See e.g. \cite{Gardiner,strebel:QD}.

\begin{remark} \label{R:sloppy differentials} Using the notation $q$ for both the quadratic differential as well as the flat metric is imprecise because the latter only determines the former up to multiplication by a unit modulus complex number.  When passing back and forth between the two, a choice of specific quadratic differential will either be irrelevant or the choice will be clear from the context.
\end{remark}

\subsection{Flat geodesics, foliations, and directions} \label{S:flat geodesics etc}

Consider a complex structure $X$ and flat metric $q$ (from a quadratic differential of the same name).  The geodesics for $q$ are Euclidean straight lines away from the cone points.  When a geodesic passes through a cone point, it subtends two angles, one on ``each side" of the cone point, each of which is at least $\pi$.  In fact, this characterizes $q$--geodesics: a path which is a Euclidean straight line away from the cone points, and makes angle at least $\pi$ on both sides of any cone point it meets is necessarily a geodesic.  Geodesic segments between pairs of cone points of $q$ that contain no cone points in their interior are called {\em saddle connections}.

Because the transition functions for the preferred coordinates defining the flat metric have the form $z \mapsto \pm z + c$, any line in the tangent space of a nonsingular point can be parallel translated around the surface, in the complement of the cone points, to produce a parallel line field.
Such line fields are in a one-to-one correspondence with the projective tangent space at any non-cone point, and we denote this space of directions as $\mathbb P^1(q)$.

Integrating a parallel line field gives a foliation by geodesics, which extends to a singular foliation over the entire surface. Thus for every $\alpha \in \mathbb P^1(q$), we have a singular foliation $\mathcal F(\alpha)$ in direction $\alpha$.  An important special case occurs when $\mathcal F(\alpha)$ has all nonsingular leaves being closed geodesics.  In this case, $S$ is a union of Euclidean cylinders, the interior of each foliated by parallel geodesic core curves in direction $\alpha$, and with boundary curves a geodesic concatenation of saddle connections (also in direction $\alpha$).  In this case, we say that $\mathcal F(\alpha)$ defines a {\em cylinder decomposition}.

\begin{example}
\label{ex:silver-L}  Consider the genus $2$ surface $S$ obtained from the polygon shown below with the sides identified by translations according to the numbering indicated. This surface is built from three squares and has an obvious $3$--fold branched cover map to the square torus.
\begin{center}
\begin{tikzpicture}
\draw[fill, opacity = .2] (0,0) -- (4,0) -- (4,2) -- (2,2) -- (2,4) -- (0,4) -- (0,0);
\draw (0,0) -- (4,0) -- (4,2) -- (2,2) -- (2,4) -- (0,4) -- (0,0);
\draw[line width = .1pt] (0,.2) -- (4,.2);
\draw[line width = .1pt] (0,.4) -- (4,.4);
\draw[line width = .1pt] (0,.6) -- (4,.6);
\draw[line width = .1pt] (0,.8) -- (4,.8);
\draw[line width = 1pt,dotted] (0,1) -- (4,1);
\draw[line width = .1pt] (0,1.2) -- (4,1.2);
\draw[line width = .1pt] (0,1.4) -- (4,1.4);
\draw[line width = .1pt] (0,1.6) -- (4,1.6);
\draw[line width = .1pt] (0,1.8) -- (4,1.8);
\draw (0,2) -- (2,2);
\draw[line width = .1pt] (0,2.2) -- (2,2.2);
\draw[line width = .1pt] (0,2.4) -- (2,2.4);
\draw[line width = .1pt] (0,2.6) -- (2,2.6);
\draw[line width = .1pt] (0,2.8) -- (2,2.8);
\draw[line width = 1pt,dashed] (0,3) -- (2,3);
\draw[line width = .1pt] (0,3.2) -- (2,3.2);
\draw[line width = .1pt] (0,3.4) -- (2,3.4);
\draw[line width = .1pt] (0,3.6) -- (2,3.6);
\draw[line width = .1pt] (0,3.8) -- (2,3.8);
\draw[fill] (0,0) circle (.05);
\draw[fill] (4,0) circle (.05);
\draw[fill] (4,2) circle (.05);
\draw[fill] (2,2) circle (.05);
\draw[fill] (2,4) circle (.05);
\draw[fill] (0,4) circle (.05);
\draw[fill] (0,2) circle (.05);
\draw[fill] (2,0) circle (.05);
\node[left] at (0,1) {$1$};
\node[left] at (0,3) {$2$};
\node[right] at (4,1) {$1$};
\node[right] at (2,3) {$2$};
\node[above] at (1,4) {$3$};
\node[below] at (1,0) {$3$};
\node[below] at (3,0) {$4$};
\node[above] at (3,2) {$4$};
\end{tikzpicture}
\end{center}
The Euclidean metric on the polygon descends to a flat metric on $S$ determined by a complex structure $X$ and quadratic differential $q$.  The vertices shown project to a single cone point of cone angle $6\pi$.
The horizontal foliation illustrated defines a cylinder decomposition with core curves isotopic to the dotted and dashed curves and with boundaries made of the three horizontal saddle connections. In fact, any direction of rational slope defines a cylinder decomposition, as can be seen from the projection to the square torus.
\end{example}

The complex structure and quadratic differential can be pulled back to the universal covering $\widetilde S$, and will be denoted by the same names.  The metric on $\widetilde S$ is CAT(0) (since all cone angles are greater than $2\pi$), and in particular, any two points are connected by a unique geodesic segment.  The characterization of geodesics in $\widetilde S$ still holds, and saddle connections are defined similarly (or equivalently, as lifts of saddle connections).  The preimage of a cylinder on $S$ is an {\em infinite strip} in $\widetilde S$: this is an isometric embedding of a strip $[0,w] \times \mathbb R$ of some width $w > 0$.  The universal covering determines a canonical identification of the directions for $q$ on $\widetilde S$ with those on $S$, and we refer to either one as $\mathbb P^1(q)$.

\subsection{Teichm\"uller disks} \label{S:Teich disks defined} Suppose $q$ is any quadratic differential for $X$ with preferred coordinates $\{\zeta_j\}$ covering the complement of the zeros of $q$.  For any matrix $A \in \SL_2(\mathbb R)$, viewed as a real linear transformation of $\mathbb C$, we obtain a new atlas $\{A \circ \zeta_j\}$ defining a new complex structure and quadratic differential,
\[ A \cdot (X,q) = (A \cdot X,A \cdot q),\]
with the same set of zeros of the same orders.  (A caveat: the notation $A \cdot X$ only makes sense in the presence of a quadratic differential $q$ for $X$).  In this setting, the identity map $(S,X,q) \to (S,A \cdot X,A \cdot q)$ is {\em affine} in the respective preferred coordinates for $q$ and $A \cdot q$, and Teichm\"uller's theorem states that the distance between $X$ and $A \cdot X$ in $\Teich(S)$ is precisely the logarithm $\log(\|A\|)$ of the operator norm
\[ \|A\| = \max \{ \|A(v)\| \mid v \in \mathbb C, \|v\| = 1 \}.\]
We call $q$ and $A \cdot q$ the {\em initial} and {\em terminal flat metrics}, respectively.  We note that one usually refers to the specific quadratic differentials as initial and terminal, but this is unimportant for us; compare Remark~\ref{R:sloppy differentials}.
As the identity $(S,X,q) \to (S,A \cdot X,A \cdot q)$ is affine, it sends every $q$--geodesic to an $(A \cdot q)$--geodesic.

If $B \in \SO(2)$ and $A \in \SL_2(\mathbb R)$, then $A \cdot X$ and $BA \cdot X$ are equivalent (since a rotation is holomorphic), and so the map $\SO(2) A \mapsto A \cdot X$ defines a homeomorphism from $\SO(2)\setminus \SL_2(\mathbb R)$ to the orbit $D_q\subset \Teich(S)$.  On the other hand, $\SO(2) \setminus \SL_2(\mathbb R)$ may be identified with the (upper half-plane model of the) hyperbolic plane, $\mathbb H^2$, via the homeomorphism $\SO(2) A \mapsto A^{-1}(i)$, and we do so.  By Teichm\"uller's theorem, the push-forward of the Poincar\'e metric is the restriction of the Teichm\"uller metric to $D_q$; in particular, $D_q$ is totally geodesic in $\Teich(S)$. We call $D_q$ the \emph{Teichm\"uller disk} of the quadratic differential $q$.  For any other point $(X',q') = A \cdot (X,q)$, we have $D_{q} = D_{q'}$ and we have a canonical identification $\mathbb P^1(q) \cong \mathbb P^1(q')$ from the affine identity map $id_S \colon (S,X,q) \to (S,X',q')$. See \cite{gardiner:QT} for details.

Any geodesic through $X$ in $D_q$ is given by $t \mapsto A_t \cdot (X,q)$, where $\{A_t\}_{t \in \mathbb R} < \SL_2(\mathbb R)$ is a symmetric, $1$--parameter, hyperbolic subgroup.  The (orthogonal) eigenlines for the common eigenvalues of the nontrivial elements give two direction in $\mathbb P^1(q)$, and we use this to identify the circle at infinity of $D_q$ with $\mathbb P^1(q)$.  Specifically, we identify the direction $\alpha$ of the contracting eigenline of $A_t$, $t > 0$, with the endpoint of the positive ray.  In particular, along this ray the flat-lengths of any saddle connection or closed geodesic in direction $\alpha$ contracts exponentially.  Furthermore, the horocycle through $X$ based at the point at infinity associated to $\alpha$ is given by $t \mapsto B_t \cdot (X,q)$, where $\{B_t\}_{t \in \mathbb R} < \SL_2(\mathbb R)$ is a $1$--parameter parabolic subgroup with all nontrivial elements having common eigenline $\alpha$.  Consequently, a saddle connection or closed geodesic in direction $\alpha$ has constant length along this horocycle, as does the transverse measure to the foliation $\mathcal F(\alpha)$.  Combining these properties, we have the following useful fact.
\begin{proposition} \label{P:horoball sublevel set}
Given a quadratic differential $q$ for some complex structure $X$ on $S$, and any saddle connection or simple closed geodesic $\sdl$ with respect to $q$ in direction $\alpha \in \mathbb P^1(q)$, the horoballs (respectively, horocycles) based at $\alpha$ in $D_q$ are sublevel sets (respectively, level sets) of the length of $\sdl$.  If there is a maximal cylinder in direction $\alpha$, then its width is constant on horocycles based at $\alpha$. \qed
\end{proposition}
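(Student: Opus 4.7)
The plan is to parameterize the Teichm\"uller disk $D_q$ using the Iwasawa decomposition of $\SL_2(\mathbb R)$, and then compute the length of $\sdl$ directly in these coordinates. Let $v \in \mathbb R^2$ be the holonomy vector of $\sdl$ in the flat metric $q$, so that $v$ is parallel to $\alpha$ and $|v|$ equals the $q$--length $L$ of $\sdl$. Let $\{A_t\}$ and $\{B_s\}$ be the hyperbolic and parabolic one-parameter subgroups appearing in the preamble: $A_t$ has $\alpha$ as its contracting eigendirection for $t>0$, while every $B_s$ is unipotent with $\alpha$ as its (common) eigenline.

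By the Iwasawa decomposition $\SL_2(\mathbb R)=NA\SO(2)$ with $N=\{B_s\}$ and $A=\{A_t\}$, every point of $D_q$ has a unique expression $Y=B_sA_t\cdot X$. The flat metric $B_sA_t\cdot q$ realizes $\sdl$ as a saddle connection (resp.\ closed geodesic) with holonomy vector $B_sA_t\, v$. Since $v$ is a joint eigenvector of $B_s$ (eigenvalue $1$) and $A_t$ (eigenvalue $e^{-t}$), we compute
\[B_sA_t\, v \;=\; B_s(e^{-t}v)\;=\;e^{-t}v,\]
so the length of $\sdl$ at $Y$ equals $e^{-t}L$; in particular it depends only on the coordinate $t$.

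I would then identify the level sets of this length function with horocycles based at $\alpha$. The $N$--orbit of $X$ is by definition the horocycle through $X$ based at $\alpha$, and because $\{A_\tau\}$ normalizes $N$ (i.e.\ $A_\tau B_sA_\tau^{-1}\in N$) and fixes $\alpha$, the $N$--orbit $\{B_sA_\tau\cdot X:s\in\mathbb R\}$ is the horocycle based at $\alpha$ that meets the geodesic $\{A_\tau\cdot X\}_{\tau\in\mathbb R}$ at parameter $\tau$; these sweep out the whole horocyclic foliation at $\alpha$ as $\tau$ varies. Thus the level sets $\{t=\tau\}$ in the Iwasawa coordinates are precisely the horocycles based at $\alpha$. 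Since $t\mapsto e^{-t}L$ is strictly decreasing, the level sets of the length of $\sdl$ coincide with these horocycles, and the sublevel set $\{\ell_{\sdl}\le C\}$ is $\{t\ge \log(L/C)\}$, which is exactly the horoball based at $\alpha$ bounded by the relevant horocycle.

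There is no real obstacle; the only conceptual check is that the Iwasawa coordinate $t$ really is a Busemann function for $\alpha$ (so that $\{t\ge c\}$ is a horoball pointing toward $\alpha$), which follows immediately from the conjugation identity above combined with the fact that $A_\tau$ acts by translation along the geodesic through $X$ ending at $\alpha$. The computation covers both the saddle connection case and the simple closed geodesic case uniformly, since in either case $|v|$ records the relevant flat length.
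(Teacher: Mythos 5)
Your proof is correct and follows essentially the same route as the paper's: the paper's preamble establishes exactly the two ingredients you compute -- that the flat length of $\sdl$ decays like $e^{-t}$ along the geodesic ray toward $\alpha$ (your $A_t$ eigenvalue computation) and is constant along horocycles based at $\alpha$ (your $B_s$ eigenvalue computation) -- and the proposition is stated with $\qed$ precisely because it is the immediate combination of those two facts. Your Iwasawa parameterization $Y = B_s A_t \cdot X$ simply makes this combination explicit; it is a fine way to write out the argument, though no new idea is involved.
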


\subsection{Veech groups}  \label{S:Veech groups} Given a quadratic differential $q$ for $X$ and associated Teichm\"uller disk $D_q \subset \Teich(S)$, the stabilizer of $D_q$, denoted $G_q < \Mod(S)$ is called the {\em Veech group} of $q$ (or $D_q$).   This is equivalently the subgroup of $\Mod(S)$ consisting of all mapping classes represented by homeomorphisms that are affine in preferred coordinates for $q$.  The derivative of any such affine homeomorphism in preferred coordinates is well defined, up to sign, and defines a homomorphism $G_q \to \PSL_2(\mathbb R)$.  The action of $G_q$ on $D_q$ gives a homomorphism from $G_q \to \Isom^+(D_q) \cong \PSL_2(\mathbb R)$, and up to conjugation, this is precisely the derivative homomorphism.  Since the action of $\Mod(S)$ on $\Teich(S)$ is properly discontinuous, so is the action of $G_q$ on $D_q$, and hence the image in $\PSL_2(\mathbb R)$ is discrete.  We say that $(S,X,q)$ is a \emph{lattice surface} if $G_q$ is a lattice (i.e.~the quotient $D_q/G_q$ has finite area).  

An element of $G_q$ is called {\em elliptic}, {\em parabolic}, or {\em hyperbolic}, respectively, if its image under the derivative map is of that type.  Elliptic elements fix a point of $D_q$, while parabolic and hyperbolic elements $g \in G_q$ fix one or two points, respectively, in $\mathbb P^1(q)$.  If $g$ is parabolic with fixed point $\alpha\in \mathbb P^1(q)$, then the foliation $\mathcal F(\alpha)$ defines a $g$--invariant cylinder decomposition, and a power of $g$ is a composition $\tau_\alpha$ of Dehn twists in the core curves of the cylinders. This power of $g$ is the identity on the saddle connections in direction $\alpha$, which form a union of spines for the complementary components of the core curves of the cylinders.  The set of {\em parabolic directions}, denoted $\mathcal P(q) \subset \mathbb P^1(q)$, is the fixed points of parabolic elements in $G_q$.

Given $\alpha \in \mathcal P(q)$, the boundary of every cylinder in direction $\alpha$ consists of saddle connections.  The key result for us in the next theorem is that all saddle connections arise in this way in the case of interest to us. See \cite{Masur-Tabachnikov,Thurston-geomDynSurfs} for details.

\begin{theorem} [Veech dichotomy]
\label{T:Veech_Dichotomy}
Suppose $(S,X,q)$ is a lattice surface.  Then every saddle connection is contained in the boundary of a cylinder in direction $\alpha \in \mathcal P(q)$.  Moreover, for every $\alpha \not \in \mathcal P(q)$, every half leaf of $\mathcal F(\alpha)$ is dense in $S$.
\end{theorem}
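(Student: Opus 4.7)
The plan is to exploit that $D_q/G_q$ is a finite-area hyperbolic orbifold whose cusps correspond precisely to the $G_q$--orbits of parabolic fixed points in $\mathcal P(q)$, combined with the fact (from the discussion preceding Proposition~\ref{P:horoball sublevel set}) that along the Teichm\"uller geodesic ray in $D_q$ with forward endpoint $\alpha$, the flat length of any saddle connection in direction $\alpha$ contracts exponentially to zero.

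First I would show that every saddle connection $\sigma$ lies in some parabolic direction. Let $\alpha\in\mathbb P^1(q)$ be the direction of $\sigma$, and suppose toward a contradiction that $\alpha\notin\mathcal P(q)$. Let $\rho\colon[0,\infty)\to D_q$ be the Teichm\"uller ray from $X$ with forward endpoint $\alpha$, so the flat length of $\sigma$ on $\rho(t)$ tends to $0$. Because $\alpha$ is not a parabolic fixed point, the projection of $\rho$ to $D_q/G_q$ does not eventually enter any cusp, hence is recurrent in this finite-area quotient: there exist $t_n\to\infty$ and $g_n\in G_q$ with $g_n\rho(t_n)$ lying in a fixed compact set $K\subset D_q$. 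Since $G_q$ acts on $D_q$ by isometries of the associated flat metrics, the length of the shortest saddle connection is a $G_q$--invariant continuous function on $D_q$, which by Mumford's compactness criterion is bounded below by some $\epsilon>0$ on $K$. But $g_n\cdot\sigma$ is a saddle connection on $g_n\rho(t_n)\in K$ whose length equals that of $\sigma$ on $\rho(t_n)$, and this tends to $0$, a contradiction.

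Given this, suppose $\sigma$ is a saddle connection in direction $\alpha\in\mathcal P(q)$. By the definition of a parabolic direction, a power of the parabolic generator fixing $\alpha$ is the multitwist $\tau_\alpha$ associated to a cylinder decomposition of $S$ in direction $\alpha$. This decomposition partitions $S$ into open flat cylinders together with a graph of boundary edges, each of which is an $\alpha$--direction saddle connection. Since cylinder interiors contain no cone points while $\sigma$ has cone-point endpoints, $\sigma$ must coincide with one of these boundary edges and hence lies on the boundary of some cylinder.

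For the second assertion, fix $\alpha\notin\mathcal P(q)$ and suppose some half-leaf of $\mathcal F(\alpha)$ were not dense in $S$. By the standard structural theory of straight-line foliations on flat surfaces, non-minimality in direction $\alpha$ forces the existence of an $\alpha$--direction saddle connection: the topological boundary of a proper saturated closed subset, or of a periodic cylindrical component, is composed of such saddle connections extracted from the singular leaves emanating from cone points. This contradicts the first step. The main obstacle in the whole argument is the recurrence/compactness step in the first paragraph: one must carefully package the equivariance of the flat-length data so that Mumford compactness applies after descending to the lattice quotient $D_q/G_q$, where the non-parabolicity of $\alpha$ is used precisely to prevent $\rho$ from escaping into a cusp.
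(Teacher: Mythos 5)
The paper does not prove this theorem; it states it as a classical result and refers to \cite{Masur-Tabachnikov,Thurston-geomDynSurfs} for the argument. So there is no in-paper proof to compare against, but your proposal does in fact reproduce the standard proof (essentially Veech's original, as presented in those references): saddle connections in a non-parabolic direction $\alpha$ would force the systole of saddle connections to tend to $0$ along the ray toward $\alpha$, while non-parabolicity of $\alpha$ forces recurrence of that ray in the finite-area quotient $D_q/G_q$, giving a contradiction; then the structure theory of directional flows handles both the first and second assertions. The recurrence dichotomy you invoke (a geodesic ray in a finite-area quotient either has parabolic forward endpoint, in which case it eventually stays in a cusp, or else returns infinitely often to a fixed compact set) is exactly the right mechanism, and the lift to a compact $K\subset D_q$ via elements $g_n\in G_q$ is correct.

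Two small remarks. First, invoking Mumford compactness is more than you need: on $K$ the function $Y\mapsto$ (length of shortest saddle connection on $q_Y$) is already continuous (since $f_{X,Y}$ is $e^{\rho(X,Y)}$--biLipschitz) and everywhere positive, hence bounded below on the compact set $K$. Second, in the last paragraph the step ``some half-leaf is not dense $\Rightarrow$ the foliation in direction $\alpha$ is non-minimal'' is not quite immediate as written, since a non-dense half-leaf need not have saturated closure. The cleaner route, which is what the structure theory you cite actually delivers, is the contrapositive via Keane's criterion: no saddle connections in direction $\alpha$ implies the first-return interval exchange satisfies the i.d.o.c., hence is minimal, hence every forward half-orbit (equivalently every infinite half-leaf of $\mathcal F(\alpha)$) is dense. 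Combined with your first step (which rules out $\alpha$--direction saddle connections for $\alpha\notin\mathcal P(q)$), this gives the second assertion directly without the intermediate non-minimality claim. These are presentational rather than substantive issues; the argument is sound.
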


\begin{example}
Consider the Veech group $G_q$ for the surface $(S,X,q)$ of Example~\ref{ex:silver-L}. 
There are two parabolic elements $g,h \in G_q$ with derivatives
\[ dg = \left( \begin{array}{cc} 1 & 2 \\ 0 & 1 \end{array} \right) \quad \mbox{ and } \quad dh = \left( \begin{array}{cc} 1 & 0 \\ 2 & 1 \end{array} \right).\]
The element $g$ preserves the horizontal foliation and is isotopic to a composition of Dehn twists about the core curves of the corresponding cylinder decomposition.
The element $h$ similarly preserves the vertical foliation and is given by a composition of Dehn twists.
The derivatives of the elements $g,h \in G_q$ generate a lattice in $\PSL_2(\mathbb R)$.  It follows that $\langle g,h \rangle < G_q$ is a finite index subgroup, and $(S,X,q)$ is a lattice surface. Since the rational slopes are precisely the directions of cylinder decompositions, we see from the Veech dichotomy that $\CP(q)=\mathbb P^1(\mathbb Q)$ in this case.
\end{example}

\begin{remark} On the one hand, it might seem that lattice Veech groups are fairly special; only countably many Teichm\"uller disks can define a lattice Veech group. On the other hand, they actually exists in abundance.  For instance, the example described above is a ``square tiled'' flat metric.  The Teichm\"uller disks defined by square tiled flat metrics are dense in Teichm\"uller space and the maximal Veech group for each one is a lattice (see~\cite{Zorich}).
\end{remark}

\section{Setup and Notation}
\label{S:setup}

We now fix a flat metric (i.e.~quadratic differential) $q$ \label{ind:q diff} for a complex structure $X_0$ \label{ind:cx str} in the Teichm\"uller space $\Teich(S)$ of a closed surface $S$ of genus at least $2$, and assume $(S,X_0,q)$ is a lattice surface.  Let $D = D_q \subset \Teich(S)$ \label{ind:disk} be the Teichm\"uller disk of $q$ and let $\rho$ be the Poincar\'e metric on $D$. 
For any $X \in D$, let $q_X$ be the terminal flat metric from the Teichm\"uller mapping $(S,X_0,q) \to (S,X,q_X)$. \label{ind:gen point}  
Let $G = G_q \le \Mod(S)$ \label{ind:Veech group} be the lattice Veech group of $q$ and  $\CP = \CP(q) \subset \mathbb{P}^1(q)$ \label{ind:par dir} be the set of parabolic directions on $q$, which are precisely the directions of the saddle connections by Theorem~\ref{T:Veech_Dichotomy}.

\subsection{Horoballs}
\label{sec:horoballs}
For each direction $\alpha\in \CP$, we fix a closed horoball $B_\alpha\subset D$ \label{ind:horoballs} that is invariant by the corresponding parabolic subgroup of $G$. We choose these so that the family $\{B_\alpha\}_{\alpha\in \CP}$ is $G$--invariant and \emph{$1$--separated}, meaning that the pairwise distance between any two horoballs is at least $1$. For each $\alpha\in \CP$, we let $c_\alpha\colon D\to B_\alpha$ be the $\rho$--closest-point projection map.\label{ind:closest point} This map is $1$--Lipschitz, as can be seen by observing that the derivative of $c_\alpha$ is norm non-increasing at each point $X\in D$.

Let $p \colon D \to \hat D$ denote the quotient space obtained by collapsing each $B_\alpha$ to a point, and give $\hat D$ the quotient pseudo-metric $\hat \rho$.\label{ind:D collapse}  It is straightforward to see that this is indeed a metric and in fact, according to \cite[Lemma I.5.20]{BH:NPC}, a path metric.  The $1$--separated assumption implies that $\hat D$ is quasi-isometric to the electric space obtained by coning each $B_\alpha$ to a point, and that $p(B_\alpha)$ and $p(B_{\alpha'})$ are at least $\hat\rho$--distance $1$ apart if $\alpha \neq \alpha'$. 

We also consider the associated truncated Teichm\"uller disk \label{ind:D trunc}
\[ \bar D = D \setminus \bigcup_{\alpha \in \CP} B_\alpha^\circ,\]
where $B_\alpha^\circ$ is the interior of $B_\alpha$. The induced path metric $\bar \rho$ on $\bar D$ is CAT(0) by \cite[Theorem II.11.27]{BH:NPC}, and the group $G$ acts cocompactly on $\bar D$. 
The projections $c_\alpha$ above now restrict to maps $c_\alpha \colon \bar D \to \partial B_\alpha \subset \bar D$ that remain $1$--Lipschitz.

\subsection{Bundles and the total space}\label{sec:bundles-total-space} Let $\pi \colon E \to D$\label{ind:E} be the pull-back bundle of the Bers fibration \eqref{E:bers fiber} via the inclusion $D \subset \Teich(S)$.
We identify $E \subset \Teich(\dot S)$.
Let $\Gamma = \Gamma_G  < \Mod(\dot S)$\label{ind:Gamma} be the extension of $G$ as in \eqref{E:extension group}.
The group $\Gamma$ acts on $E$ and the quotient $E/\Gamma$ is a noncompact (orbifold) $S$--bundle over $D/G$.
To rectify this noncompactness, we also consider the pull-back bundle over $\bar D$
\[ \bar E = \pi^{-1}(\bar D) = E \setminus \bigcup_{\alpha \in \CP} \CB_\alpha^\circ,\]
where $\CB_\alpha^\circ = \pi^{-1}(B_\alpha^\circ)$ is the interior of the horoball preimage $\CB_\alpha = \pi^{-1}(B_\alpha)$ for $\alpha\in \CP$.\label{ind:E trunc}\label{ind:horobundle} 
The space $\bar E$ is a $4$--manifold with boundary
\[ \partial \bar E = \pi^{-1}\Big(\bigcup_{\alpha \in \CP} \partial B_\alpha \Big) = \bigcup_{\alpha \in \CP} \partial \CB_\alpha,\]
and the group $\Gamma$ acts on $\bar E$ with quotient $\bar E/\Gamma$ a compact $S$--bundle over $\bar D/G$:
\[ S \to \bar E/\Gamma \to \bar D/G.\]

\subsection{Moving between fibers}
\label{sec:moving_between_fibers}
For any $X \in D$ let $E_X = \pi^{-1}(X) \subset E$\label{ind:gen fib} be the fiber over $X$,\label{ind:base fib} with $E_0 = E_{X_0}$ denoting the fiber over $X_0$.\label{ind:base fib}  The fiber $E_X$ is canonically identified with the universal cover $\widetilde S$ of $S$ equipped with the (pulled back) complex structure $X$ and flat metric $q_X$.

Given $X,Y \in D$, let $f_{X,Y} \colon E_Y \to E_X$\label{ind:ass map} be the lift of the Teichm\"uller mapping to the universal covering, which is affine with respect to the flat metrics $q_X$ and $q_Y$ on the domain and range, respectively.\label{ind:fib map}  We note that $f_{X,Y} f_{Y,Z} = f_{X,Z}$ and that $f_{X,Y}$ is $e^{\rho(X,Y)}$--biLipschitz (in preferred Euclidean coordinates, $f_{X,Y}$ is given by $(u,v) \mapsto (e^{\rho(X,Y)}u,e^{-\rho(X,Y)}v$). Varying over all $Y \in D$, these determine a map
\[f_X \colon E \to E_X,\quad\text{where}\quad f_X\vert_{E_Y} = f_{X,Y}\quad\text{for any $Y \in D$}.\]
In the case of $X_0$, we simply write $f = f_{X_0} \colon E \to E_0$.  For any $X \in D$ and $x \in E_X$, the fiber $D_x = f_X^{-1}(x)$ is the unique Teichm\"uller disk in $\Teich(\dot S)$ through $x$ that covers $D$ via the projection $\pi$.  We also consider these objects in $\bar E$ over $\bar D$, as well, writing $\bar D_x = D_x \cap \bar E$, which is a truncated Teichm\"uller disk mapping bijectively to $\bar D$.\label{ind:horiz D}

We note that $E$ is product, being a bundle over the contractible space $D$.  The product structure $E \cong D \times \widetilde S$ is quite natural; indeed, after identifying $E_X$ with $\widetilde S$ (for any $X \in D$), the maps $\pi$ and $f_X$ determine the projections onto the two factors.

For any $\alpha \in \CP$, let $f_\alpha \colon \bar E \to \partial \CB_\alpha \subset \bar E$ be defined by
\[ f_\alpha(x) = f_{c_\alpha(\pi(x))}(x),\]
for any $x \in \bar E$.\label{ind:horiz closest}  In words, $f_\alpha$ restricted to any fiber $E_X$ is the canonical map $f_{Y,X}$ to the fiber $E_Y$, where $Y = c_\alpha(X)$ is the $\rho$--closest point on $\partial B_\alpha$ to $X$.

\subsection{Trees} \label{S:tree discussion}

For every $\alpha \in \CP$ there is a simplicial tree $T_\alpha$ dual to foliation of $E_0$ in direction $\alpha$.\label{ind:tree}  The action of $\pi_1S$ on $E_0$ determines an action on $T_\alpha$, identifying it as the Bass-Serre tree dual to the (core curves of the) cylinder decomposition of $S$ in direction $\alpha$.

There is a natural $\pi_1S$--equivariant map $E_0 \to T_\alpha$ that pushes forward the transverse measure of the foliation to a metric on $T_\alpha$ which on each edge is a multiple of the Euclidean metric from the simplicial structure.   For any other point $Y \in D$, $f_{X_0,Y} \colon E_Y \to E_0$ maps the foliation of $E_Y$ in direction $\alpha$ to the foliation of $E_0$ in direction $\alpha$.  Composing the map $E_0 \to T_\alpha$ with $f_{X_0,Y}$ gives another ``natural map", and the induced metrics differ only by dilation (according to the change in transverse measures).  We assemble these maps all together into
\[ t_\alpha \colon E \to T_\alpha,\]
given as the composition $f \colon E \to E_0$ with the map $E_0 \to T_\alpha$.\label{ind:tree proj}
From the last sentence of Proposition~\ref{P:horoball sublevel set}, it follows that any two points $X,Y$ on the same horocycle based at $\alpha$ define the same metric on $T_\alpha$.

\subsection{Cone points} \label{S:cone points}
For $X \in D$, let $\Sigma_X \subset E_X$ denote the set of cone points of the flat structure $q_X$ on $E_X$. We write $\Sigma_0 = \Sigma_{X_0}$, as a special case, and define\label{ind:cone points}
\[ \Sigma = \bigcup_{X \in D} \Sigma_X \quad \mbox{ and } \quad \bar \Sigma = \Sigma \cap \bar E.\]
Note that $\Sigma$ is the union of all Teichm\"uller disks through all cone points and $\bar \Sigma$ the union of truncated Teichm\"uller disks through cone points.

\subsection{Bundle metrics}
\label{sec:bundle_metrics}
We give $E$ a metric $d$ defined as follows. In $E - \Sigma$, the metric is Riemannian given by the orthogonal direct sum of the flat metric in the fiber and Poincar\'e metric on Teichm\"uller disks, and then $E$ can be identified as the metric completion.  As explained in \S\ref{S:homogeneous}, away from the singular locus, the metric is locally homogeneous; see Proposition~\ref{P:E is singular locally homogeneous}. This metric is invariant by $\Gamma$. The induced \emph{path} metric in each fiber $E_X$ is its flat metric, but the fibers are distorted. The induced metric in each Teichm\"uller disk $D_x$ is a Poincar\'e metric, and each such $D_x$ is isometrically embedded with $\pi$ restricting to an isometry $D_x\to D$.\label{ind:E quots}

The subspace $\bar E \subset E$ is given the induced path metric which we denote $\bar d$.
Since $\Gamma$ acts isometrically and cocompactly on $(\bar E, \bar d)$, this metric space will serve as our quasi-isometric model for the extension group $\Gamma$.

We note that $\bar \Sigma$ is $r$--dense for some $r > 0$, which follows from the compactness of the quotient metric space $\bar E/\Gamma$. 
Using the Arzel\`a--Ascoli Theorem and the compactness of $\bar E / \Gamma$, it is easy to see that any sequence of paths joining $x$ and $y$ in $\bar E$ whose lengths converge to $\bar d(x,y)$ has a subsequential limit that is, necessarily, a geodesic. Thus $(\bar E, \bar d)$ is a geodesic metric space.

From the $G$--invariant quotient $p \colon D \to \hat D$ we construct a $\Gamma$--invariant quotient $P \colon E \to \hat E$ as follows.  Let $T_\alpha$ be the tree and $t_\alpha \colon E \to T_\alpha$ the map described in \S\ref{S:tree discussion}.  We endow $T_\alpha$ with the metric $d_\alpha$ coming from any point on the horocycle $\partial B_\alpha$, making $T_\alpha$ into an $\mathbb R$--tree.  For concreteness (and for later use) we also fix a point $X_\alpha \in \partial B_\alpha$ so that $d_\alpha$ is obtained from the push forward of the transverse measure on the foliation of $E_{X_\alpha}$ in direction $\alpha$.\label{ind:metrics}

\begin{definition}[The hyperbolic space]
\label{defn:hatE}
Let $P \colon E \to \hat E$ denote the quotient obtained by collapsing each set $\CB_\alpha$ onto $T_\alpha$ via the map $t_\alpha|_{\CB_\alpha}$. 
Equip $\hat E$ with the quotient pseudo-metric $\hat d$ obtained from $\bar d$ under the (surjective) restriction
\[\bar P = P\vert_{\bar E} \colon \bar E \to \hat E.\]
\end{definition}

We note that there is a uniform lower and upper bound of the length of every edge of every tree $T_{\alpha}$ since there are only finitely-many $\Gamma$--orbits of edges.

Lemma~\ref{L:quotient metric on E hat} below shows that $\hat d$ is in fact a path metric.  The main part of Theorem~\ref{T:main full} is that the metric space $\hat E$ is hyperbolic (see Theorem~\ref{T:hyperbolicity of hat E}).  Let $\vtx \subset \hat E$\label{ind:vtx} be the set of all vertices of all trees $T_\alpha$ over all $\alpha \in \CP$. 
We write $\alpha \colon \vtx \to \CP$ to denote the map that associates to $v$ the direction $\alpha(v) \in \CP$ so that $v \in T_{\alpha(v)}$.  
Given $v \in \vtx$, we will write $B_v = B_{\alpha(v)}$, $\partial B_v = \partial B_{\alpha(v)}$, etc.\label{ind:alpha(v)}

The map $\bar  \pi = \pi\vert_{\bar E}\colon \bar E\to \bar D$ descends to a quotient map $\hat \pi \colon \hat E \to \hat D$ that is $1$--Lipschitz  by construction (as it is the descent of a $1$--Lipschitz map).\label{ind:other proj} 
For every $x \in E$, the image $\hat D_x$ of $D_x$ in $\hat E$ is a obtained by collapsing $\CB_\alpha \cap D_x$ to a point, for each $\alpha \in \CP$, and hence $\hat \pi|_{\hat D_x} \colon \hat D_x \to \hat D$ is a bijection.  In particular, each $\hat D_x$ with its path metric is isometric to $\hat D$, and isometrically embedded in $\hat E$.
Objects in $E$, $\bar E$, and $\hat E$ are called {\em vertical} if they are contained in a fiber of $\pi$, $\bar \pi$, or $\hat \pi$, respectively, and {\em horizontal} if they are contained in $D_x$, $\bar D_x$, or $\hat D_{x}$, for some $x \in E,\bar E$.
The following commutative diagram 
summarizes the situation:
\begin{equation} \label{Eq:diagram of maps}
\begin{tikzcd}[row sep=6]
                 & E \ar[dr,  "P"] \ar[dd, near start, "\pi"] \\
\bar E \arrow[ur, hook] \ar[dd,  "\bar \pi"] \ar[rr,  near start,  "\bar P" below, crossing over] &&  \hat E \ar[dd,  "\hat \pi"]\\
                 & D \ar[dr,  "p"] \\
\bar D \arrow[ur, hook] \ar[rr,  "\bar p"] &&  \hat D\\
\end{tikzcd}
\end{equation}

Some key features of the various metrics that we will use are highlighted in the following two lemmas.  In particular, the next lemma states that the trees $T_\alpha$ inside of $\hat E$ behave exactly as expected.

\begin{lemma} \label{L:quotient metric on E hat}
The quotient pseudo-metric $\hat d$ on $\hat E$ is a path metric.  The map $\bar P \colon \bar E \to \hat E$ is $1$--Lipschitz and is a local isometry at every point $x \in \bar E - \partial \bar E$.  Furthermore, for every $\alpha\in \CP$, 
\begin{itemize}
\item The induced path metric on $P(\partial \CB_\alpha) = T_\alpha$ is $d_\alpha$, the $\mathbb{R}$--tree metric determined by the horocycle $\partial B_\alpha$.

\item The subspace topology on $T_\alpha \subset \hat E$ agrees with the $\mathbb{R}$--tree topology on $T_\alpha$.
\end{itemize}
\end{lemma}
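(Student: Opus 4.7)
The strategy is to verify the four claims in sequence, with the identification of the quotient metric on each $T_\alpha$ being the main content. That $\bar P$ is $1$-Lipschitz follows directly from the definition of a quotient pseudo-metric. To see that $\hat d$ is a genuine path metric, I would invoke \cite[Lemma I.5.20]{BH:NPC} together with the observations that the collapsing sets $\{\partial \CB_\alpha\}_{\alpha \in \CP}$ are pairwise $1$-separated in $(\bar E, \bar d)$---this is because $\bar\pi$ is $1$-Lipschitz and the family $\{B_\alpha\}$ is $1$-separated in $\bar D$---and that each fiber of $\bar P$ is closed, being either a single point or a leaf-preimage within some $\partial \CB_\alpha$. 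These facts give both the positivity of $\hat d$ on distinct classes and the path metric structure inherited from $\bar d$.

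For the local isometry claim, fix $x \in \bar E \setminus \partial \bar E$ and set $r = \tfrac{1}{3}\bar d(x, \partial \bar E) > 0$. For any $y$ in the $\bar d$-ball of radius $r$ about $x$, the trivial chain $(x_1, y_1) = (x, y)$ witnesses $\hat d(\bar P(x), \bar P(y)) \le \bar d(x,y) < r$. Any chain that actually uses a collapsing identification must begin with a jump to a point $y_1 \in \partial \CB_\alpha$ for some $\alpha$, contributing $\bar d(x, y_1) \ge 3r$ to the sum---strictly larger than what the trivial chain achieves. Hence $\hat d(\bar P(x), \bar P(y)) = \bar d(x,y)$ on this ball, proving local isometry.

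The R-tree identification I would establish via a local equality $\hat d = d_{T_\alpha}$ on $T_\alpha$ at small scales. The upper bound $\hat d(s,t) \le d_{T_\alpha}(s,t)$ is immediate from $\bar P$ being $1$-Lipschitz: pick representatives $x, y$ of $s,t$ in $E_{X_\alpha}$ on a common geodesic perpendicular to $\mathcal{F}(\alpha)$ of flat length $L = d_{T_\alpha}(s,t)$, and project via $\bar P$ to get a path in $T_\alpha$ of $\hat d$-length at most $L$. For the reverse inequality at scales $L < 1$, I would classify the chains: any chain detouring through a distinct horoball $\partial \CB_{\alpha'}$ incurs cost at least $\bar d(\partial \CB_\alpha, \partial \CB_{\alpha'}) \ge 1 > L$ and is ruled out; any chain whose collapses occur only within $\partial \CB_\alpha$ has each link projecting $1$-Lipschitzly onto $T_\alpha$ via $t_\alpha$. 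The key input here is Proposition~\ref{P:horoball sublevel set}: for $X \in \partial B_\alpha$, the affine map $f_{X_\alpha, X}$ is parabolic with axis $\alpha$ and hence preserves the transverse-to-$\mathcal{F}(\alpha)$ measure, while the leaf-projection $\tau \colon E_{X_\alpha} \to T_\alpha$ is $1$-Lipschitz. Summing these contributions along any such chain gives the lower bound.

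The main obstacle will be carefully verifying that $t_\alpha|_{\partial \CB_\alpha}$ is $1$-Lipschitz for all combinations of vertical (fiber) motion and horizontal motion along the horocycle $\partial B_\alpha$ within a Teichm\"uller disk $\bar D_y$; each type of displacement must contribute only nonnegatively to transverse-to-$\mathcal{F}(\alpha)$ distance. Once this local equality $\hat d = d_{T_\alpha}$ is established, the global identification of the induced path metric on $T_\alpha$ with $d_{T_\alpha}$ follows because tree metrics are determined by local data, and the subspace topology claim follows because $\hat d|_{T_\alpha}$ and $d_{T_\alpha}$ agree on small scales and thus generate the same topology.
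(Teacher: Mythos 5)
Your outline follows the paper's proof closely: $1$-Lipschitz by definition, the appeal to \cite[Lemma I.5.20]{BH:NPC} for the path metric once positive-definiteness is known, an essentially identical local isometry argument, and a lower bound on $\hat d$ along $T_\alpha$ via $1$-separation and the transverse measure. Two steps, however, are not correct as written. First, positive-definiteness of $\hat d$ does not follow from ``$1$-separation plus closed fibers''; the $1$-separation only separates distinct trees $T_\alpha$, and closed fibers are not a sufficient criterion for a quotient pseudo-metric to be a metric. The positivity for two points in the same $T_\alpha$ must come from the same quantitative lower bound you need for the tree-metric identification; the paper derives it there rather than from a soft criterion.

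Second, and more seriously, the claim that each chain link projects \emph{$1$-Lipschitzly} to $T_\alpha$ via $t_\alpha$ (which you flag as the key input) is false. The map $t_\alpha|_{\partial\mathcal B_\alpha}$ is length-nonincreasing for the \emph{path} metric of $\partial\mathcal B_\alpha$, but the $\bar d$-geodesic between two points $x_i,y_i\in\partial\mathcal B_\alpha$ generically leaves $\partial\mathcal B_\alpha$, since $\partial B_\alpha$ is a horocycle and hence not convex in the $\mathrm{CAT}(0)$ space $\bar D$. At a point $X\in\bar D$ with $\rho(X,B_\alpha)=t>0$, the derivative of $f_{X_\alpha,X}$ expands the direction transverse to $\alpha$ by $e^{t}$, so $t_\alpha|_{E_X}$ is $e^{t}$-Lipschitz and not $1$-Lipschitz. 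Thus if $\bar d(x_i,y_i)<2\epsilon$ one only gets $d_{T_\alpha}(t_\alpha(x_i),t_\alpha(y_i))\le e^{\epsilon}\bar d(x_i,y_i)$. The paper handles this by first retracting the connecting geodesics $\gamma_i$ into $\partial\mathcal B_\alpha$ via $f_\alpha$, which is $e^\epsilon$-Lipschitz on the $\epsilon$-neighborhood, and only then applying the $1$-Lipschitz projection; this yields $\hat d\ge e^{-\epsilon}d_{T_\alpha}$ on small scales, and the equality of the induced path metric with $d_{T_\alpha}$ together with the topology statement is recovered in the limit $\epsilon\to 0$. Your ``summing the contributions'' step needs exactly this $e^\epsilon$ correction; without it the lower bound does not close.
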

\begin{proof}
The map $\bar P$ is $1$--Lipschitz by definition of $\hat d$.
According to \cite[Lemma I.5.20]{BH:NPC}, since $\bar d$ is a path metric, $\hat d$ is necessarily a path metric provided it is a metric. 
Recall that for $\hat x,\hat y\in \hat E$, the quotient pseudo-metric $\hat d(\hat x, \hat y)$ is the infimum of $\sum_i \bar d(x_i,y_i)$
over all tight chains $x_1,y_1,\dots,x_n,y_n$ from $\hat x$ to $\hat y$, meaning that $\bar P(x_1) = \hat x$, $\bar P(y_n) = \hat y$, and $\bar P(y_i) = \bar P(x_{i+1})$ for all $1\le i < n$ with $y_i \neq x_{i+1}$. 

First suppose $x\in \bar E \setminus \partial \bar E$ and set $3\epsilon = \bar d(x, \partial \bar E) > 0$. Let $U\subset \bar E$ be the ball of radius $\epsilon$ about $x$ and let $z\in U$ be arbitrary. For any $y\in \bar E$, consider any tight chain $z_1,y_1\dots,z_n,y_n$ from $\hat z = \bar P(z)$ to $\hat y =\bar P(y)$. If $y_1\ne y$, then tightness implies $y_1\in \partial \bar E$ (since $\bar P$ is injective on $\bar E\setminus \partial \bar E$) and hence $\sum_i \bar d(z_i,y_i) \ge \bar d(z_1,y_1) \ge 2\epsilon$. Otherwise we have the trivial chain $z=z_1,y_1=y$ with sum $\bar d(z,y)$. This proves
\[\bar d(z,y) \ge \hat d(\hat z,\hat y) \ge \min\{2\epsilon, \bar d(z,y)\}\quad\text{for all $z\in U$ and $y\in \bar E$}.\]
In particular, $\hat d(\hat z,\hat y) = \bar d (z,y)$ for all $z,y\in U$, proving that $\bar P$ is a local isometry on $\bar E\setminus \partial \bar E$. We also see that $\hat d(\hat z,\hat y) = 0$ forces $z=y$ and hence $\hat z = \hat y$, establishing the positive definiteness of $\hat d$ in the case that one point lies in $\bar P(\bar E\setminus \partial \bar E)$.

For $\alpha \in \CP$, let $\bar \ell_\alpha$ be the path metric on $\partial \CB_\alpha \subset \bar E$, and $\ell_\alpha$ the path pseudo metric on $T_\alpha \subset \hat E$. Note that  $\ell_\alpha \le d_\alpha$ by construction. For any $X\in \partial B_\alpha$, the restriction of $\bar P\vert_{\partial \CB_\alpha} = t_\alpha \vert_{\partial \CB_\alpha}$ to $E_X$ is just the projection $E_X\to T_\alpha$ onto the $\mathbb R$--tree dual to the foliation of $E_X$ in direction $\alpha$.
Since the path metric induced by $\bar d$ on $\partial \CB_\alpha$ is the path metric coming from the Riemannian orthogonal sum of flat metrics in fibers and horocycle length in Teichm\"uller disks, we see that every path $\gamma$ in $\partial \CB_\alpha$ satisfies
\[\bar \ell_\alpha\text{--}\mathrm{length}(\gamma)\ge d_\alpha\text{--}\mathrm{length}(\bar P(\gamma)).\]

Next suppose $x\in \partial \bar E$, and say $x\in \partial \CB_\alpha$. Since $\hat \pi \colon \hat E \to \hat D$ is $1$--Lipschitz, we have $\hat d(\bar P(x),\bar P(y)) \ge \hat \rho (\hat \pi (\bar P(x)),\hat \pi(\bar P(y))) > 0$ for any $y\notin \partial \CB_\alpha$. For the purposes of proving $\hat d$ is a metric, it therefore suffices to fix some $0 < \epsilon < \tfrac{1}{2}$ and consider a point $y\in \partial \CB_\alpha$ such that $\hat d(\bar P(x),\bar P(y)) < \epsilon$. 
Take any tight chain $x_1,\dots,y_n$ from $\hat x = \bar P(x)$ to $\hat y = \bar P(y)$ with $\sum_{i}\bar d(x_i,y_i) < \hat d(\hat x,\hat y)+\epsilon < 2\epsilon$. By tightness, $x_i,y_i\in \partial \bar E$ for each $1\le i \le n$. If $x_i$ and $y_i$ lie in distinct components of $\partial \bar E$, then $\bar d(x_i,y_i) \ge \bar \rho(\bar \pi(x),\bar\pi(y)) \ge 1$ by the fact that our horoballs $\{B_\alpha\}_{\alpha \in \CP}$ are $1$--separated. As this contradicts the assumption on $\sum_i \bar d(x_i,y_i)$, we must have $x_i,y_i\in \partial \CB_\alpha$ for all $1\le i \le n$. 
Now let $\gamma_i$ be a geodesic joining $x_i$ to $y_i$ in $\bar E$ of length $\bar d(x_i,y_i) < 2\epsilon$. Hence $\gamma_i$ lies in the $\epsilon$--neighborhood of $\partial \CB_\alpha$. The map $f_\alpha \colon \bar E\to \partial \CB_\alpha$ is $e^{\epsilon}$--Lipschitz on this neighborhood, as can be seen by noting that this holds in the fiber direction (since $f_{X,Y}$ is $e^{\rho(X,Y)}$--Lipschitz) and $c_\alpha$ is $1$--Lipschitz. Therefore $f_\alpha(\gamma_i)$ is a path in $\partial \CB_\alpha$ of length at most $e^{\epsilon} \bar d(x_i,y_i)$. Since $\bar P(f_\alpha(y_i)) = \bar P(f_\alpha(x_{i+1}))$, the images $\bar P(f_\alpha(\gamma_i))$ concatenate to give a path $\gamma$ in $T_\alpha$ from $\hat x$ to $\hat y$ satisfying
\begin{align*}
\hat d(\hat x,\hat y) &\le \ell_\alpha(\hat x, \hat y) \le d_\alpha(\hat x, \hat y)\le d_\alpha\text{--}\mathrm{length}(\gamma)\\
& = \sum_{i=1}^n d_\alpha\text{--}\mathrm{length}(\bar P(f_\alpha(\gamma_i))) \le e^{\epsilon} \sum_{i=1}^n \bar d(x_i,y_i).
\end{align*}
Since this holds for all tight chains, we conclude that $e^{-\epsilon}d_\alpha(\hat x, \hat y)$ is a lower bound on $\hat d(\hat x, \hat y)$ whenever $\hat x,\hat y\in T_\alpha$ satisfy $\hat d(\hat x, \hat y) < \epsilon$. This proves $\hat d$ is a metric, since we now see that $\hat d(\hat x,\hat y) = 0$ implies $d_\alpha(\hat x,\hat y) = 0$ and hence $\hat x = \hat y$.

Since any path in $T_\alpha$ may be subdivided into pieces whose successive endpoints satisfy $\hat d(\hat x_i,\hat x_{i+1})< \epsilon$, this also proves that $\ell_\alpha \ge e^{-\epsilon}d_\alpha$ for all small $\epsilon > 0$. Therefore $\ell_\alpha = d_\alpha$ as claimed by the lemma. Finally, this argument establishes Lipschitz inequalities $e^{-\epsilon} d_\alpha \le \hat d \le d_\alpha$ for nearby points in $T_\alpha$ and, specifically, proves that for each $\hat x\in T_\alpha$ and all small $\epsilon > 0$ we have
\[\big\{\hat y \in T_\alpha \mid d_\alpha (\hat x,\hat y)<\epsilon \big\}
 \subset \big\{\hat y\in T_\alpha \mid \hat d(\hat x, \hat y) < \epsilon \big\}
 \subset \big\{\hat y \in T_\alpha \mid d_\alpha (\hat x, \hat y) < e^\epsilon \epsilon \big\}.\]
Thus the $\mathbb{R}$--tree and subspace topologies on $T_\alpha$ agree, and the lemma holds.
\end{proof}

Although the metric on $\hat E$ is defined from the metric on $\bar E$, the next lemma shows that the map from $E$ to $\hat E$ is equally well-behaved.

\begin{lemma} \label{L:P is also 1-Lip} The map $P \colon E \to \hat E$ is $1$--Lipschitz.
\end{lemma}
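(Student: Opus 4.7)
The plan is to bound $\hat d(P(x), P(y))$ by the $d$--length of any rectifiable path from $x$ to $y$ in $E$; infimizing then gives the $1$--Lipschitz property.

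The key technical ingredient is that $t_\alpha\vert_{\CB_\alpha} \colon \CB_\alpha \to T_\alpha$ is $1$--Lipschitz with respect to $d$ and the tree metric $d_\alpha$. Away from the codimension-$2$ cone locus $\Sigma$, the metric $d$ is Riemannian and given by the orthogonal sum of flat-in-fiber and Poincar\'e-in-disk metrics, so $|dt_\alpha|$ can be computed pointwise. Since $f_{X_\alpha}$ is constant on each Teichm\"uller disk $D_x$, the differential $dt_\alpha$ vanishes on horizontal vectors. In the fiber direction at a point over $X \in B_\alpha$ with $s = \rho(X, \partial B_\alpha) \ge 0$, I would write $X = A_s(X')$ with $X' = c_\alpha(X) \in \partial B_\alpha$ and $A_s$ the Teichm\"uller flow having $\alpha$ as attracting fixed point. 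Then $f_{X_\alpha, X}$ factors via $f_{X_\alpha, X'} \circ f_{X', X}$ as a parabolic shear (fixing direction $\alpha$) composed with $A_{-s}$, whose derivative in $\alpha$--aligned preferred coordinates is upper-triangular with diagonal $(e^s, e^{-s})$. The projection $E_{X_\alpha} \to T_\alpha$ records only the $\alpha$--transverse component with unit norm, so composing yields operator norm $e^{-s} \le 1$ on the fiber. Integrating along rectifiable paths in $\CB_\alpha$ (approximating to avoid $\Sigma$ if necessary) gives $d_\alpha\text{--length}(t_\alpha \circ \sigma) \le d\text{--length}(\sigma)$.

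With this in hand, for any rectifiable path $\eta$ from $x$ to $y$ in $E$, I would decompose $\eta$ into finitely many subpaths $\eta_1, \dots, \eta_n$ each lying entirely in $\bar E$ or in some $\CB_\alpha$. This finite decomposition is possible because the $1$--separation of $\{B_\alpha\}_{\alpha\in \CP}$ and cocompactness of $G$ on $\bar D$ ensure that $\eta$'s compact image meets only finitely many horoball preimages. For $\eta_i \subseteq \bar E$, the $\bar d$-- and $d$--lengths of $\eta_i$ coincide (by definition of the induced path metric), so the $1$--Lipschitz property of $\bar P$ from Lemma~\ref{L:quotient metric on E hat} gives $\hat d(P(\eta_i(0)), P(\eta_i(1))) \le d\text{--length}(\eta_i)$. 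For $\eta_i \subseteq \CB_\alpha$, the Lipschitz bound on $t_\alpha$ combined with the identification from Lemma~\ref{L:quotient metric on E hat} of the induced path metric on $T_\alpha \subset \hat E$ with $d_\alpha$ gives $\hat d(P(\eta_i(0)), P(\eta_i(1))) \le d_\alpha(t_\alpha(\eta_i(0)), t_\alpha(\eta_i(1))) \le d\text{--length}(\eta_i)$. Summing via the triangle inequality and infimizing over $\eta$ concludes the proof.

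The main obstacle is the infinitesimal Lipschitz bound $|dt_\alpha| \le 1$, which hinges on correctly pairing the $\alpha$--transverse direction recorded by the tree projection with the direction contracted (by the factor $e^{-s}$) by the Teichm\"uller flow piece $A_{-s}$ of $f_{X_\alpha, X}$; this contraction is exactly what compensates for descending deeper into the horoball $B_\alpha$.
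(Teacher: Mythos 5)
Your proposal is correct and follows essentially the same route as the paper's proof: decompose a near-geodesic into subpaths in $\bar E$ or in the $\CB_\alpha$, handle the former via Lemma~\ref{L:quotient metric on E hat}, and handle the latter by noting that $P|_{\CB_\alpha} = t_\alpha$ collapses horizontal directions and is fiberwise $1$--Lipschitz. The only difference is that you spell out the derivative computation (the $e^{-s}$ contraction coming from the diagonal of $A_{-s}$ paired with the transverse projection) that the paper dispatches with ``it is easy to see,'' which is a welcome clarification but not a different argument.
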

\begin{proof} Suppose $x,y \in E$ are any two points.  Given any $\epsilon > 0$, there is a path $\gamma$ from $x$ to $y$ with length $(1+\epsilon)d(x,y)$ that decomposes as a concatenation $\gamma = \gamma_1 \cdots \gamma_k$, with each $\gamma_i$ contained in $\bar E$ or in $\CB_\alpha$, for some $\alpha \in \mathcal P$.   Since $\epsilon$ is arbitrary, it suffices to prove that the length of $P(\gamma)$ is no greater than the length of $\gamma$.  If $\gamma_i$ is a path in $\bar E$, then the $d$--length is equal to the $\bar d$--length (since $\bar d$ is the path metric induced by $d$), and since $\bar P$ is $1$--Lipschitz, the length of $P(\gamma_i)$ is no greater than the length of $\gamma_i$.  On  the other hand, if $\gamma_i$ is contained in $\CB_\alpha$, then it maps by $P$ to the tree $T_\alpha$.  The restriction of $P$ to each fiber $E_X$, for $X \in B_\alpha$ is $1$--Lipschitz, and the horizontal directions collapse completely, and so it is easy to see that the length of $P(\gamma_i)$ is no more than that of $\gamma_i$, thus completing the proof.  \end{proof}

\subsection{Spines}
\label{S:spines}
For any $\alpha \in \CP$ and $X \in D$, we consider the union of the set of all saddle connections in direction $\alpha$ in $E_X$.  The components of this space are precisely the preimages of vertices of $v \in T_\alpha$ under the map $t_\alpha|_{E_X} = P \circ f_{X_0,X} \colon E_X \to T_\alpha$.  For a vertex $v \in T_\alpha^{(0)}\subset \vtx$ and $X \in D$, we write $\theta^v_X = (t_\alpha|_{E_X})^{-1}(v) \subset E_X$ for this component, which we call the {\em $v$--spine in $E_X$}.  The closure of each component of 
\[ E_X -  \bigcup_{v \in T_\alpha^{(0)}} \theta^v_X \]
is an infinite strip covering a Euclidean cylinder in direction $\alpha$.  For each $v$, let $\bTheta^v_X$ be the union of $\theta^v_X$ together with all these infinite strips that meet $\theta^v_X$, which we call the {\em thickened $v$--spine in $E_X$}.  Note that $f_{X,Y}$ maps $\theta^v_Y$ and $\bTheta^v_Y$, respectively, to $\theta^v_X$ and $\bTheta^v_X$, respectively. \label{ind:spines}

The union of the $v$--spines and thickened $v$--spines over $\partial B_v$ are denoted
\[ \theta^v = \bigcup_{X \in \partial B_v} \theta^v_X \quad \quad \bTheta^v = \bigcup_{X \in \partial B_v} \bTheta^v_X. \]
These are bundles over $\partial B_v$ which we call the {\em $v$--spine bundle} and the {\em thickened $v$--spine bundle}.\label{ind:spine bundles}  These bundles are only used for organizational purposes in this paper, but will play a more fundamental role in the sequel \cite{DDLSII}.

\begin{lemma}
\label{L:strip-and-saddle-bound}
There exists a constant $M > 0$ such that
\begin{enumerate}
\item
\label{item:saddles-bounded-below}
For every $X\in \bar D$, every saddle connection in $E_X$ has length at least $\frac{1}{M}$.
\item
\label{item:strips-and-saddles-bounded-above}
For each $v\in \vtx$ and $X\in \partial B_v$, every saddle connection in $\theta^v_X$ has length at most $M$ and every strip in $\bTheta^v_X$ has width at most $M$ and at least $\frac{1}{M}$. In particular, for points $X\in \partial B_\alpha$, the saddle connections and strips of $E_X$ in direction $\alpha\in \CP$ have, respectively, uniformly bounded lengths and widths.
\item
\label{item:bounded-triangulation}
 For every $X \in \bar D$, there is a triangulation of $E_X$ by saddle connections so that the triangles have diameters at most $M$ and interior angles at least $\frac1M$.

\end{enumerate}
\end{lemma}
\begin{proof}
For each $X \in D$, the fiber $E_X$ is canonically identified with the universal cover of the closed surface $S$ equipped with the flat metric for the quadratic differential $q_X$.  The existence of a (finite) triangulation of $(S,X,q_X)$ by saddle connections follows from \cite{Masur-Smillie}, and we lift this to a triangulation of $E_X$.

Since $(S,X,q_X)$ has finitely many cone points, there is a lower bound on the flat distance between any two cone points and hence a lower bound on the length of any saddle connection on $E_X$.  For each $\alpha\in \CP$, the flat surface $(S,X,q_X)$ decomposes into finitely many cylinders in direction $\alpha$ whose boundary curves are geodesic concatenations of saddle connections in direction $\alpha$. In particular, $(S,X,q_X)$ has only finitely many saddle connections in the $\alpha$ direction. Since the strips and saddle connections in the $\alpha$ direction on $E_X$ are precisely the preimages of these finitely many cylinders and saddle connections on $(S,q_X)$, there is a maximal width/length of any strip/saddle connection on $E_X$ in the $\alpha$ direction.

Items (1) and (3) follow from compactness of $\bar D / G$. Item (2) follows from the facts that widths/lengths of strips/saddle connections in direction $\alpha$ over a $\partial B_\alpha$ are constant (Proposition~\ref{P:horoball sublevel set}) and that there are only finitely many $G$--orbits in $\CP$.
\end{proof}

\subsection{A useful diagram} \label{S:diagram section}

Figure \ref{fig:Veech bundle} collects many of the main pieces of the setup for the paper.  The truncated Teichm\"uller disk $\bar {D}$ is the base of the $\mathbb{H}^2$--bundle $\bar{E}$.  The rest of the pieces include:
\begin{figure}
\includegraphics[width=\linewidth]{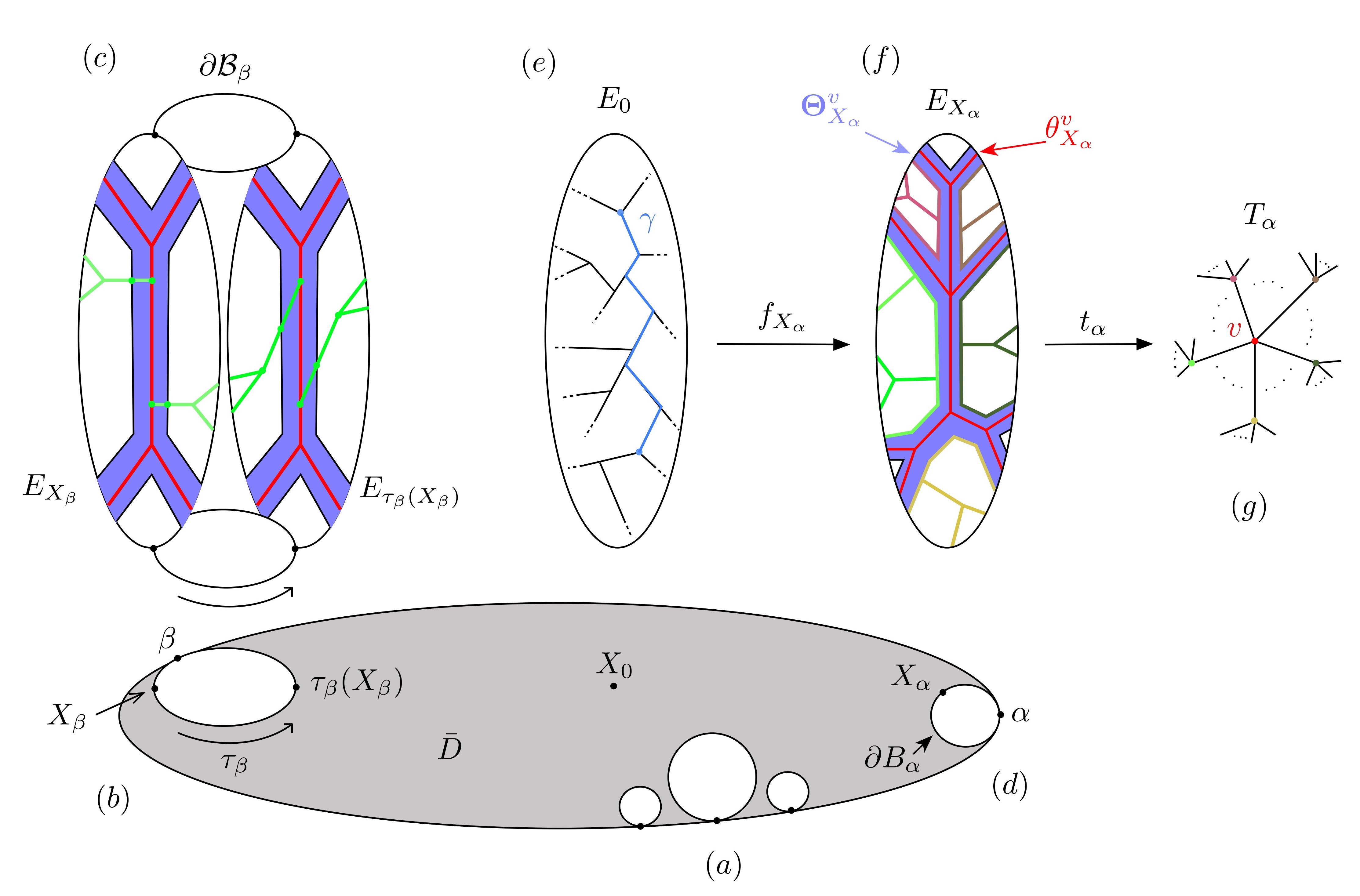}
\caption{A cartoon of key aspects of $E$ and $\bar E$ over $D$ and $\bar D$, respectively.}
\label{fig:Veech bundle}
\end{figure}
\begin{itemize}
\item[(a)] Three $1$--separated horoballs.

\item[(b)]  A horoball in the direction $\beta$ with the chosen basepoint $X_{\beta}$.  Twisting via the multitwist $\tau_{\beta}\in G$ moves $X_{\beta}$ along the horocycle.

\item[(c)] Two fibers in the universal cover, $\partial \mathcal B_{\beta}$, of the graph manifold which is the bundle over the horocycle in direction $\beta$.  The fibers $E_{X_{\beta}}$ and $E_{\tau_{\beta}(X_{\beta})}$ are above $X_{\beta}$ and $\tau_{\beta}(X_{\beta})$, respectively.  The multitwist $\tau_{\beta}$ induces shearing in the fibers.

\item[(d)] A horoball in the direction $\alpha$, with the chosen basepoint $X_{\alpha}$ and its horocycle $\partial B_{\alpha}$.

\item[(e)] The fiber $E_0$ over the basepoint $X_0$.  A flat geodesic $\gamma$ between cone points in the fiber $E_0$ is a concatenation of saddle connections.  The fiber $E_0$ maps to the fiber $E_{X_{\alpha}}$ by the lift $f_{X_{\alpha}}$ of the Teichm\"uller mapping between $X_0$ and $X_{\alpha}$.  

\item[(f)] The fiber $E_{X_{\alpha}}$ over the horocyclic point $X_{\alpha}$.  The spine $\theta^v_{X_{\alpha}}$ in direction $\alpha$ is in red, with the thickened spine neighborhood $\bTheta^v_{X_{\alpha}}$ indicated in lavender.  Other spines stick off from the boundary of $\bTheta^v_{X_{\alpha}}$.

\item[(g)] The tree $T_{\alpha}$ for the direction $\alpha$ with the map $t_{\alpha}|_{E_{X_\alpha}} \colon E_{X_{\alpha}} \to T_{\alpha}$.  The various spines are collapsed to the vertices and two spines are connected by an edge in $T_{\alpha}$ if their neighborhoods meet. The length of an edge of $T_\alpha$ is the width of the strip in $E_{X_\alpha}$ mapping to it.
\end{itemize}

\subsection{Combinatorial paths and distances}
\label{S:combinatorial distances}

In studying $\bar E$ and $\hat E$, it will be helpful to utilize certain well-behaved paths that, in particular, allow us to understand when pairs of points are bounded distance apart.  The following lemma provides coarse geometric information about distances in combinatorial terms.  It will be used primarily in the sequel \cite{DDLSII}.

\begin{lemma} \label{L:combinatorial paths in bar E} 
There exists $R > 0$ so that any two points $x,y \in \bar \Sigma$ are connected in $\bar E$ by a path of length at most $R \bar d(x,y)$ that is a concatenation of at most $R\bar d(x,y)+1$ pieces, each of which is either a saddle connection of length at most $R$ in a vertical vertical fiber, or a horizontal geodesic segment in $\bar E$.
\end{lemma}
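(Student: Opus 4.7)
The plan is to construct the required path explicitly by concatenating a single horizontal segment in a Teichm\"uller disk with an edge path in a triangulation of a fiber by saddle connections. Given $x \in E_X \cap \bar\Sigma$ and $y \in E_Y \cap \bar\Sigma$, set $x' = f_{Y,X}(x)$. Since $f_{Y,X}$ is affine in preferred coordinates, it carries $\Sigma_X$ bijectively to $\Sigma_Y$, so $x' \in E_Y \cap \bar\Sigma$. The horizontal geodesic from $x$ to $x'$ lies in the truncated Teichm\"uller disk $\bar D_x \subset \bar E$ and has $\bar d$--length equal to $\bar\rho(X, Y) \le \bar d(x,y)$; this will serve as the single horizontal piece of our path.

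It then remains to connect $x'$ to $y$ inside $E_Y$ using short saddle connections. The triangle inequality in $\bar E$ gives the flat-distance bound
\[ d_Y(x', y) = \bar d(x', y) \le \bar d(x', x) + \bar d(x, y) \le \bar\rho(X,Y) + \bar d(x,y) \le 2\bar d(x,y). \]
Now fix a saddle-connection triangulation of $E_Y$ as provided by Lemma~\ref{L:strip-and-saddle-bound}, whose edges have lengths in $[1/M, M]$ and whose triangles have all angles at least $1/M$. These uniform bounds imply that the $1$--skeleton of the triangulation is quasi-isometric to $(E_Y, d_Y)$ with constants depending only on $M$: vertices have uniformly bounded valence (there are only finitely many possible cone angles $k\pi$, each supporting at most $kM\pi$ incident triangles), triangles have uniformly bounded inradius, and a straight-line flat geodesic can be tracked by the triangulation vertices it passes near. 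In particular there is $K = K(M)$ such that the cone points $x', y$ are joined in the $1$--skeleton by an edge path consisting of at most $K d_Y(x',y) + K \le 2K\bar d(x,y) + K$ saddle connections, each of length at most $M$.

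Concatenating the horizontal segment with this edge path produces a path of the required form, with at most $1 + 2K\bar d(x,y) + K$ pieces and total length at most $\bar d(x,y) + M(2K\bar d(x,y) + K)$. The lemma follows upon choosing $R$ large enough to absorb all the constants. The main technical point is establishing the uniform quasi-isometry between the $1$--skeleton of the triangulation and $E_Y$; although standard in spirit, some care is needed to confirm that the quasi-isometry constants depend only on the bounded-geometry constant $M$ from Lemma~\ref{L:strip-and-saddle-bound} and not on $Y$ itself.
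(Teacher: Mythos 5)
The approach of making a single horizontal jump from $x$ to $x' = f_{Y,X}(x)$ and then traversing saddle connections within a single fiber $E_Y$ does not work, and the error is pinpointed in the line
\[
d_Y(x',y) \;=\; \bar d(x',y) \;\le\; \bar d(x',x) + \bar d(x,y) \;\le\; 2\bar d(x,y).
\]
The first equality is false. The fiber $E_Y$ is a subset of $\bar E$, so any path inside $E_Y$ is a path in $\bar E$, which gives $\bar d(x',y)\le d_Y(x',y)$; but the reverse inequality does not hold, because paths in $\bar E$ can leave the fiber, travel horizontally to a Teichm\"uller disk where the relevant flat distance has been contracted, cross there cheaply, and return. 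The resulting distortion of $E_Y$ inside $\bar E$ is exponential: if the segment in $E_X$ between $x$ and $f_{X,Y}(y)$ lies (close to) the expanding direction of the Teichm\"uller geodesic from $X$ to $Y$, then $d_Y(x',y)\approx e^{\bar\rho(X,Y)}\cdot d_X(x,f_{X,Y}(y))$, whereas $\bar d(x,y)\le d_X(x,f_{X,Y}(y)) + \bar\rho(X,Y)$ grows only linearly. Thus no linear bound $d_Y(x',y)\le C\,\bar d(x,y)$ is available, and the number and total length of saddle connections in your edge path in $E_Y$ can be exponentially larger than $\bar d(x,y)$, breaking the lemma's conclusion.

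This is exactly why the paper's proof subdivides a $\bar d$--geodesic into pieces whose horizontal projection has length at most $1$, and only pushes each such piece into the $1$--skeleton of the fiber at its terminal endpoint: over a horizontal displacement of at most $1$, the affine maps $f_{X_i,X_{i-1}}$ are $e$--bi-Lipschitz, so the distortion incurred at each step is a fixed multiplicative factor and the total length accumulates linearly rather than exponentially. The paper's construction therefore produces roughly $\bar d(x,y)$ alternating horizontal pieces and blocks of short saddle connections, while your construction has a single horizontal piece and cannot control the vertical part. Your secondary claim, that the $1$--skeleton of the saddle-connection triangulation of a fiber is $K(M)$--quasi-isometric to the fiber, is reasonable and is in fact used implicitly (``pushing $\mu'_i$ into the $1$--skeleton \ldots produces a new path whose length grows by another fixed factor'') in the paper's proof; the missing idea is the subdivision into uniformly short horizontal intervals, which is essential, not cosmetic.
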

\begin{proof}
Take a geodesic $\gamma'$ in $\bar E$ joining $x,y\in \bar \Sigma$. Let $L\le \bar d(x,y)$ be the length of the projected path $\bar \pi(\gamma')$ in $\bar D$, and choose an integer $n$ so that $n-1\le L < n$. Divide $\gamma'$ into $n$ subpaths $\gamma' = \gamma'_1\dotsb\gamma'_n$ such that $\bar \pi(\gamma'_i)$ has length $L/n\le 1$.  Let $x'_{i-1},x'_i$ be the endpoints of $\gamma'_i$ and set $X_i = \pi(x'_i)\in \bar D$. For each $0 < i < n$, the diameter bound  for a triangulation as in Lemma~\ref{L:strip-and-saddle-bound} allows us to connect $x'_i\in E_{X_i}$ to some cone point $x_i\in \Sigma_{X_i}$ by a vertical path of length at most $M$. Append these to the endpoints of $\gamma'_i$ to obtain a path $\gamma_i$ from $x_{i-1}$ to $x_i$ with $\bar\pi(\gamma_i)= \bar\pi(\gamma'_i)$. These concatenate to give a path $\gamma = \gamma_1\dotsb \gamma_n$ in $\bar E$ joining $x$ to $y$ of total length 
\[\mathrm{length}(\gamma) \le \bar d(x,y) + 2M(n-1) \le \bar d (x,y) + 2ML \le (2M+1)\bar d(x,y).\]

For each $0 < i \le n$, set $y_i = f_{X_{i},X_{i-1}}(x_{i-1})$ and connect $x_{i-1}$ to $y_i$ by a horizontal geodesic $h_i$. Note that $h_i$ has length at most that of $\bar \pi(\gamma'_i)$.  As noted in \S\ref{sec:moving_between_fibers}, $f_{X,Y}$ is $e^{\rho(X,Y)}$--biLipschitz, and hence on $\pi^{-1}(B_1(X_i))$, $f_{X_i}$ is $e$--Lipschitz. Therefore $\mu'_i = f_{X_i}(\gamma_i)$ a  path in $E_{X_i}$ whose length is at most $e$ times the length of $\gamma_i$. Pushing $\mu'_i$ into the $1$--skeleton of the triangulation of $E_{X_i}$ produces a new path $\mu_i$, whose length grows by another fixed factor, that connects $y_i$ to $x_i$ via a sequence of vertical saddle connections of length at most $M$.

Putting it all together, we have a path $h_1\mu_1\dotsb h_n\mu_n$ from $x$ to $y$ consisting of horizontal geodesics $h_i$ and vertical saddle connections of length at most $M$. The horizontal pieces contribute total length
\[\sum_{i=1}^n \bar d\text{--}\mathrm{length}(h_i) \le \sum_{i=1}^n \bar \rho\text{--}\mathrm{length}(\bar\pi(\gamma'_i)) \le \bar d\text{--}\mathrm{length}(\gamma') = \bar d(x,y).\]
Similarly, the vertical pieces contribute total length at most a fixed multiple of $\bar d\text{--}\mathrm{length}(\gamma) \le (2M+1)\bar d(x,y)$. Hence the length of $h_1\mu_1\dotsb h_n\mu_n$ is bounded as desired. Since each saddle connection has length bounded below (Lemma~\ref{L:strip-and-saddle-bound}), the number of saddle connections in this concatenation is linearly bounded by $\bar d(x,y)$, and the number of horizontal pieces is $n \le L+1 \le \bar d(x,y) + 1$.
\end{proof}

The following lemma allows us to approximate arbitrary points in $\hat E$ by the ``nicer" subset $\vtx \subset \hat E$.

\begin{lemma} \label{L: collapsed vertex set dense}
There exists $R_0 > 0$ so that $\vtx$ is $R_0$--dense in $\hat E$.
\end{lemma}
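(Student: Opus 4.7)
The plan is to exploit the cocompactness of the $\Gamma$--action on $\bar E$ together with the uniform bound on edge lengths in the trees $T_\alpha$.

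First, I would establish that every point of $\bar E$ lies within uniformly bounded $\bar d$--distance of the boundary $\partial \bar E = \bigcup_{\alpha \in \CP} \partial \CB_\alpha$. Since $\bar E / \Gamma$ is a compact $S$--bundle over the compact surface with nonempty boundary $\bar D/G$, the continuous function $x \mapsto \bar d(x, \partial \bar E)$ descends to a continuous function on $\bar E / \Gamma$ (using that $\Gamma$ acts isometrically and permutes the boundary components $\partial \CB_\alpha$), hence attains a maximum $c \geq 0$.

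Second, I would use the already-noted fact (stated just after the definition of $t_\alpha$) that there is a uniform upper bound $L$ on the $d_\alpha$--length of every edge of every tree $T_\alpha$, since there are only finitely many $\Gamma$--orbits of such edges. In particular, every point of $T_\alpha$ lies within $d_\alpha$--distance at most $L/2$ of a vertex in $\vtx$. By Lemma~\ref{L:quotient metric on E hat}, the path metric on $T_\alpha \subset \hat E$ agrees with $d_\alpha$, so this also bounds the $\hat d$--distance from any point of any $T_\alpha$ to $\vtx$ by $L/2$.

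Finally, given any $\hat x \in \hat E$, choose a lift $x \in \bar E$ with $\bar P(x) = \hat x$ (using surjectivity of $\bar P$). By the first step, there exist $\alpha \in \CP$ and $y \in \partial \CB_\alpha$ with $\bar d(x,y) \leq c$. Since $\bar P$ is $1$--Lipschitz (Lemma~\ref{L:quotient metric on E hat}), we get $\hat d(\hat x, \bar P(y)) \leq c$, and $\bar P(y) \in T_\alpha$. By the second step there is some $v \in \vtx$ with $\hat d(\bar P(y), v) \leq L/2$, and the triangle inequality gives $\hat d(\hat x, v) \leq c + L/2$. Setting $R_0 = c + L/2$ completes the proof.

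The one place that requires a moment of care is verifying that the bound $c$ in the first step really exists; the subtlety is just that $\bar E$ itself is not compact, so one must pass to the quotient $\bar E/\Gamma$ and use that $\Gamma$ acts by isometries sending boundary components to boundary components, so that distance to $\partial \bar E$ is $\Gamma$--invariant and descends to the compact quotient. Everything else is an immediate consequence of the structural lemmas already established.
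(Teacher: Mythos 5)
Your proof is correct, and it takes a genuinely different (and more constructive) route than the paper. The paper's own argument is a one-liner: the function $\hat x \mapsto \hat d(\hat x, \vtx)$ is continuous and $\Gamma$--invariant (since $\vtx$ is $\Gamma$--invariant and $\Gamma$ acts by isometries on $\hat E$), hence descends to the compact space $\hat E/\Gamma$, where it is bounded; that bound is $R_0$. You instead work upstairs in $\bar E$: you use compactness of $\bar E/\Gamma$ to bound the distance to $\partial \bar E$, then use the Lipschitzness of $\bar P$ to push this into $\hat E$, then use the uniform edge-length bound in the trees $T_\alpha$ together with Lemma~\ref{L:quotient metric on E hat} (identifying the path metric on $T_\alpha \subset \hat E$ with $d_\alpha$) to land on a vertex. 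Your argument has the virtue of producing an explicit constant $R_0 = c + L/2$ and of never needing to reason about the quotient topology of $\hat E/\Gamma$; the paper's argument is shorter but implicitly relies on $\hat E/\Gamma$ being compact (justified there as the continuous image of $\bar E/\Gamma$) and on continuity of the descended distance function. Both are sound.
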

\begin{proof} This follows from the fact that $\vtx$ is $\Gamma$--invariant and that $\hat E/\Gamma$ is compact, being the continuous image of $\bar E/\Gamma$ under the descent of $\bar P \colon \bar E \to \hat E$.
\end{proof}

We will control distances in $\hat E$ with the following type of nicely behaved paths.

\begin{definition} \label{D:combinatorial path}
A \emph{horizontal jump} in $\hat E$ is the image under $\bar P$ of a geodesic in $\bar D_z$, for some $z\in \bar\Sigma$, 
that connects two components of $\partial \bar D_z$ and whose interior is disjoint from $\partial \bar D_z$.
A \emph{combinatorial path} in $\hat E$ is a concatenation of horizontal jumps.
\end{definition}

Note that every horizontal jump in $\hat E$ connects points of $\vtx$, by construction, and has length at least $1$. Indeed, for distinct $\alpha,\alpha'\in \CP$, the points $\hat \pi(P(\CB_{\alpha}))$ and $\hat \pi(P(\CB_{\alpha'}))$ have distance at least $1$. Since $\hat \pi$ is $1$--Lipschitz, any path joining $\partial \CB_{\alpha}$ to $\partial \CB_{\alpha'}$ in $\bar E$ thus projects to a path of length at least $1$ in $\hat E$. In particular, the number of jumps in a combinatorial path is bounded by the path's total length.

\begin{lemma}  \label{L: combinatorial path} 
There is constant $C>0$ such that
any pair of points $x,y\in \vtx$ may be connected by a combinatorial path of length at most $C\hat d(x,y)$ that, in particular, consists of at most $C\hat d(x,y)$ horizontal jumps.
\end{lemma}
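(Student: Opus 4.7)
The plan is to reduce to a bounded-distance statement via density of $\vtx$, and then solve the bounded-distance problem via cocompactness of the $\Gamma$-action on $\hat E$. For the first step, given $x, y \in \vtx$ with $x \ne y$, I would take a near-geodesic path in $\hat E$ of length at most $\hat d(x,y) + 1$, sample along it at intervals of $R_0$, and round each sample to a vertex of $\vtx$ within $\hat d$-distance $R_0$ (possible by Lemma~\ref{L: collapsed vertex set dense}). This yields a sequence $x = v_0, v_1, \ldots, v_n = y$ in $\vtx$ with $n = O(\hat d(x,y))$ and $\hat d(v_{i-1}, v_i) \le 3R_0$ for each $i$.

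The core claim I would prove is that there exists a constant $K$ such that every pair $(v,w) \in \vtx^2$ with $\hat d(v,w) \le 3R_0$ admits a combinatorial path from $v$ to $w$ of total length at most $K$. Granting this, concatenating the bounded paths for successive pairs $(v_{i-1}, v_i)$ yields a combinatorial path from $x$ to $y$ of length at most $Kn = O(\hat d(x,y))$; the jump-count bound follows automatically since each horizontal jump has $\hat d$-length at least $1$ (any path between distinct boundary components $\partial \CB_\alpha$ and $\partial \CB_\beta$ projects under the $1$-Lipschitz map $\hat \pi$ to a path of length $\ge 1$ by $1$-separation of horoballs). Any additive slack is absorbed into the linear bound, since $\hat d$ is uniformly bounded below between distinct vertices of $\vtx$: by the uniform lower bound on edge lengths within each tree $T_\alpha$ (see \S\ref{sec:bundle_metrics}) for vertices in a common tree, and by $\hat\rho \ge 1$ between distinct cusps in $\hat D$ for vertices in different trees.

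To prove the claim, I would combine the isometric and cocompact action of $\Gamma$ on $\hat E$, inherited from its cocompact action on $\bar E$, with the local finiteness of $\vtx \subset \hat E$. Local finiteness follows from two facts: via the $1$-Lipschitz descent $\hat \pi \colon \hat E \to \hat D$ and the discreteness of cusps in $\hat D$ (a consequence of $G$ acting cocompactly on $\bar D$ together with $1$-separation), any bounded $\hat d$-ball meets only finitely many trees $T_\alpha$; and within each $T_\alpha$ the metric is the $\mathbb{R}$-tree metric with edges of uniformly bounded length, so only finitely many vertices of each such tree lie in the ball. Hence $\{(v,w) \in \vtx^2 : \hat d(v,w) \le 3R_0\}$ decomposes into finitely many $\Gamma$-orbits, so it suffices to exhibit \emph{some} combinatorial path between a representative of each orbit and transport it by the $\Gamma$-action.

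The main obstacle is thus the qualitative existence of a combinatorial path joining any fixed pair $v,w\in\vtx$. I would establish this by choosing cone points $\tilde v \in \theta^v_{X_v}$ and $\tilde w \in \theta^w_{X_w}$ (so $\bar P(\tilde v) = v$, $\bar P(\tilde w) = w$), applying Lemma~\ref{L:combinatorial paths in bar E} to connect them in $\bar E$ by a concatenation of horizontal geodesics and short vertical saddle connections, and then projecting via $\bar P$: the horizontal pieces are geodesics in horizontal disks $\bar D_{z_i}$ which subdivide at their intersections with $\partial \bar D_{z_i}$ into genuine horizontal jumps, while residual vertical saddle-connection pieces in fibers over horocycles can be rerouted through horizontal disks based at cone points of adjacent spines. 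The delicate point here is handling moves between different vertices of a single $T_\alpha$, which must be routed through disks at cone points whose spine configurations match up; since only existence (not quantitative bounds) is needed, the qualitative construction suffices and the uniform constant $K$ is extracted afterwards from the finiteness of $\Gamma$-orbits.
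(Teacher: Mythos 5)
Your overall strategy — reduce to the case $\hat d(x,y)\le 3R_0$ via density of $\vtx$, then invoke cocompactness and local finiteness of $\vtx$ to get a uniform constant $K$ from finitely many $\Gamma$--orbits of close pairs — is a legitimate ``soft'' alternative to the paper's explicit chain construction, and the parts you spell out (local finiteness via $\hat\pi$ and the $1$--separation of cusps, absorption of additive slack via the uniform lower bound on $\hat d$ between distinct vertices, the jump count from $\hat\pi$ being $1$--Lipschitz) are all sound. However, the soft argument only pays off if you actually establish the \emph{qualitative existence} of a combinatorial path between an arbitrary pair $v,w\in\vtx$, and this is exactly where your sketch breaks.

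The specific gap is in the sentence where you push a path from Lemma~\ref{L:combinatorial paths in bar E} down by $\bar P$. That lemma produces a concatenation of horizontal geodesics $h_i\subset \bar D_{z_i}$ and short vertical saddle connections, but the cone points $z_i$ at which the pieces join are generic points of $\bar\Sigma$, not points of $\partial\bar E$. So a horizontal piece $h_i$ does \emph{not} subdivide along $\partial\bar D_{z_i}$ into horizontal jumps: its first and last subsegments (from an interior endpoint to the first/last intersection with $\partial\bar D_{z_i}$) have an endpoint off $\partial\bar D_{z_i}$, and hence are not horizontal jumps in the sense of Definition~\ref{D:combinatorial path}. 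Worse, the vertical saddle connections can live in fibers $E_X$ with $X$ in the interior of $\bar D$; there $\bar P$ is a local isometry (Lemma~\ref{L:quotient metric on E hat}), so the $\bar P$--image of such a piece is literally a vertical saddle connection in $\hat E$, not a concatenation of horizontal jumps — there is nothing to ``subdivide.'' Your phrase ``residual vertical saddle-connection pieces in fibers over horocycles can be rerouted'' both quietly restricts to the wrong subcase and elides the real content: the rerouting of a vertical move into horizontal jumps through an auxiliary direction $\beta$ is precisely the nontrivial construction that the paper carries out in Claim~\ref{claim:combinatorial_in_tree} (pick a bounded saddle connection $\sigma$ crossing the strip between adjacent spines, note its direction $\beta$ puts you near $\partial B_\beta$, observe that $f_{Y,X}(\sigma)$ collapses under $\bar P$, and so on). Until that construction, or an equivalent ``connectedness of the horizontal-jump graph on $\vtx$'' argument, is spelled out, the existence step — and hence the whole soft argument — has a hole. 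The paper's proof avoids this issue entirely by working with tight chains from the quotient pseudo-metric and tracking skips quantitatively, which also yields the explicit constant directly rather than via orbit finiteness.
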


\begin{proof}
For $\alpha \in \CP$, let $\bar \ell_\alpha$ be the path metric on $\partial \CB_\alpha \subset \bar E$. Also let  $\ell_\alpha$ be the path metric on $T_\alpha \subset \hat E$ which, recall from Lemma~\ref{L:quotient metric on E hat}, is the $\mathbb R$--tree metric dual to the foliation of $E_{X_\alpha}$. 
Finally, let $\bar \Sigma_\alpha = \bar\Sigma\cap \partial \CB_\alpha$ and note that there exists some $K\ge 30$, independent of $\alpha\in \CP$, such that each $z\in \partial \CB_\alpha$ satisfies $\bar d(z,w)\le K/30$ for some $w\in \bar \Sigma_\alpha$; indeed by Lemma~\ref{L:strip-and-saddle-bound}(\ref{item:bounded-triangulation}) we may take $K=30M$. 
As a first step towards the lemma, we show how to jump between points of $\bar P(\bar \Sigma_\alpha)$:

\begin{claim}
\label{claim:combinatorial_in_tree}
There exists $R'>0$ such that any pair of points $v_1,v_2\in \bar P(\bar \Sigma_\alpha)\subset \vtx$ may be connected by a combinatorial path of length at most $R' \ell_\alpha(v_1,v_2)$.
\end{claim}
\begin{proof}[Proof of Claim~\ref{claim:combinatorial_in_tree}]
By decomposing a $T_\alpha$--geodesic from $v_1$ to $v_2$ into its edges, it suffices to suppose $v_1$ and $v_2$ are adjacent vertices of $T_\alpha$. Pick any point $X\in \partial B_\alpha$ and consider the restriction $\bar P\vert_{E_X}\colon E_X \to T_\alpha$. The preimage of $v_1,v_2$ under this map are adjacent spines $\theta^{v_1}_X,\theta^{v_2}_X$ which are separated by a strip whose width is bounded by Lemma~\ref{L:strip-and-saddle-bound}. Therefore we may pick a bounded-length saddle connection $\sigma \subset E_X$ joining cone points $y_1\in \theta^{v_1}_X$ to $y_2 \in \theta^{v_2}_X$.

Let $\beta \in \CP$ be the direction of $\sigma$. Since $\sigma$ has bounded length, $X$ is bounded distance in $D$ from some point $Y$ in the horocycle $\partial B_\beta$. Let $z_i = f_{Y,X}(y_i)$ and let $h_i$ be the horizontal geodesic in $D_{y_i}$ from $y_i$ to $z_i$. By definition, the $\bar P$--image of each component of $h_i\cap \bar D_{y_i}$ is a horizontal jump in $\hat E$. Each of these jumps has length at most $\rho(X,Y)$ which itself is uniformly bounded. The saddle connection $f_{Y,X}(\sigma)$ in $E_Y$ is collapsed to a point by $\bar P$. Hence taking the jumps from $h_1$ and then coming back along the jumps from $h_2$ gives a bounded length combinatorial path in $\hat E$ from $v_1$ to $v_2$. Since $\ell_\alpha(v_1,v_2)$ is uniformly bounded below (by the minimal length of an edge in $T_\alpha$) the claim follows.
\end{proof}

\begin{claim} If $x,y, \in \vtx$ and $r = \hat d(x,y) > 0$, then $x$ and $y$ can be connected by a combinatorial path of length $\leq 4 R'e^{Kr}r$.
\end{claim}

\begin{proof}
Let $x_1,y_1,\dots,x_n,y_n\in \bar E$ be any tight chain from $\bar P(x)$ to $\bar P(y)$, as in Lemma~\ref{L:quotient metric on E hat}, with $\sum_{i=1}^n \bar d(x_i,y_i) \le 2 r$. By tightness and the fact $\bar P(x_1),\bar P(y_n)\in \vtx$, we have $x_i,y_i\in \partial \bar E$ for all $1\le i \le n$. 
We note, moreover, that $y_i,x_{i+1}$ lie in a common boundary component $\partial \CB_{\alpha}$ for each $1\le i<n$.

We call a pair $x_i,y_i$ along this chain a {skip} (in contrast to a jump) if $x_i\in \partial \CB_\alpha$ and $y_i\in \partial \CB_{\beta}$ for distinct directions $\alpha,\beta \in \CP$.
Observe that in this case $\bar d(x_i,y_i) \ge 1$ since the horoballs $B_\alpha,B_\beta$ are $1$--separated.
Choose nearby points $w_i\in \bar \Sigma_\alpha$ and $z_i\in \bar\Sigma_\beta$ so that $\bar d(x_i,w_i),\bar d(z_i,w_i)\le K/30$. Therefore
\[\bar d(x_i,w_i) + \bar d(w_i,z_i) + \bar d(z_i,y_i) \le \frac{4K}{30} + \bar d(x_i,y_i) \le  \frac{K}{6}\bar d(x_i,y_i).\]
Now insert copies of $w_i,z_i$ to get a new chain $\dots,x_i,w_i,w_i,z_i,z_i,y_i,\dots$ (of $2(n+2)$ elements) where the skip $w_i,z_i$ is between points of $\bar \Sigma\cap \partial\bar E$. Making an insertion for each skip and relabeling, as necessary, we henceforth assume our chain $x_1,\dotsc,y_n$ has $\sum_{i=1}^n \bar d(x_i,y_i) \le Kr/3$, lies in $\partial \bar E$, and only skips between points of $\bar \Sigma$.

Consider again a skip $x_i,y_i$ of our improved chain, say with  $x_i\in \bar\Sigma_\alpha$. Let $Z_i\in \bar D$ be the closest point on $\partial B_\alpha$ to $\bar \pi(y_i)$, and let $z_i\in \partial \CB_\alpha\cap \bar D_{y_i}\subset \bar\Sigma_\alpha$ be its lift to the Teichm\"uller disk $\bar D_{y_i}$.
By construction of $\bar d$, the $\bar \rho$--geodesic from $\bar \pi(y_i)$ to $Z_i = \bar \pi(z_i)$ lifts to a path from $y_i$ to $z_i$ in $\bar E$ with the same length. Since any path $\gamma'$ from $y_i$ to $\partial \CB_\alpha$ has 
\[\mathrm{length}(\gamma')\ge \mathrm{length}(\bar \pi(\gamma')) \ge \bar \rho (\bar \pi(z_i)), \bar \pi(y_i)) = \bar d (y_i,z_i),\]
we see that $\bar d(z_i, y_i) \le \bar d(x_i,y_i)$. Therefore, by the triangle inequality, we have
\[\bar d(x_i,z_i) + \bar d(z_i,y_i) \le \bar d(x_i,y_i) + 2\bar d(y_i,z_i) \le 3 \bar d(x_i,y_i).\]
This means we may insert $z_i$ to get a new chain $\dots,x_i,z_i,z_i,y_i,\dots$ whose distance sum is at most $Kr$. Making such an adjustment for each skip and relabeling if necessary, we may now assume that our chain $x_1,\dots,y_n$ lies in $\partial \bar E$, that $\sum_{i=1}^n \bar d(x_i,y_i) \le Kr$, and that each skip is horizontal and between points of $\bar \Sigma$, meaning that $x_i,y_i\in \bar \Sigma$ lie in the same Teichm\"uller disk $\bar D_{x_i} = \bar D_{y_i}$ whenever they lie in distinct components of $\partial \bar E$.

Now suppose that $\alpha \in \CP$ and $1 \le j \le k \le n$ are such that the points $x_j,y_j,\dots,x_k,y_k$ all lie in $\partial \CB_\alpha$. For each $j \le i \le k$, the pair $x_i,y_i$ may be joined by a path $\gamma_i$ in $\bar E$ of length at most $2\bar d(x_i,y_i) \le 2Kr$. Hence $\gamma_i$ is contained in the $Kr$--neighborhood of $\partial \CB_\alpha$ in $\bar E$. Since the map $f_\alpha \colon \bar E \to \partial \CB_\alpha$ is $e^{Kr}$--Lipschitz on this set  and restricts to the identity of $\partial \CB_\alpha$, we conclude $f_\alpha(\gamma_i)$ has length at most $2e^{Kr}\bar d(x_i,y_i)$. 
Since $\bar P$ is $1$--Lipschitz, the images of the paths $f_\alpha(\gamma_i)$ under $\bar P$ therefore concatenate to yield a path in $T_\alpha$ from $\bar P(x_j)$ to $\bar P(x_k)$ of length at most 
\[\sum_{i=j}^k \mathrm{length}(f_\alpha(\gamma_i)) \le 2e^{Kr} \sum_{i=j}^k \bar d(x_i,y_i).\]
Since $\bar P(x_k),\bar P(x_j)\in \bar P(\bar \Sigma_\alpha)$, we may now use Claim~\ref{claim:combinatorial_in_tree} to construct a combinatorial path from $\bar P(x_j)$ to $\bar P(x_k)$ of length at most $2R' e^{Kr}\sum_{i=k}^j \bar d(x_i,y_i)$.

On the other hand, for each skip $x_i,y_i$ in our chain the horizontal geodesic in $\bar D_{x_i} = \bar D_{y_i}$ from $x_i$ to $y_i$ projects to a combinatorial path from $\bar P(x_i)$ to $\bar P(y_i)$ of length at most $\bar d(x_i,y_i)$. Concatenating these with the  paths produced above for each maximal subchain $x_j,y_j,\dots,x_k,y_j$ in some common component $\partial\bar \CB_\alpha$, we finally produce a combinatorial path from $x = \bar P(x_1)$ to $y = \bar P(y_n)$ of length at most $2R'e^{Kr}\sum_{i=1}^n \bar d(x_i,y_i) \le 4R' e^{Kr}r$. 
\end{proof}

We now show lemma holds for $C = 36 R' e^{3KR_0}$, where $R_0$ is the density constant from Lemma~\ref{L: collapsed vertex set dense}. If $r=\hat d(x,y) \le 3R_0$, then the above produces a combinatorial path from $x$ to $y$ of length at most $\frac{C}{9} \hat d (x,y)$, as desired. If $r > 3 R_0$, then we may join $x$ to $y$ by a path $\gamma$ of length at most $2 r$. Subdivide $\gamma$ into $n = \lceil \length(\gamma)/R_0\rceil$ subsegments of equal length at most $R_0$. By Lemma~\ref{L: collapsed vertex set dense}, each subdivision point is within distance $R_0$ of $\vtx$. In this way, we obtain a sequence $x = x_0,\dots,x_n =y$ in $\vtx$ with $\hat d(x_i, x_{i+1})\le 3R_0$ for each $i$. Connecting each $x_i$ to $x_{i+1}$ by a combinatorial path of length at most $\frac{C}{3}R_0$, we obtain a combinatorial path from $x$ to $y$ of length at most $\frac{C}{3}R_0 n$. Since $R_0 n < \length(\gamma) + R_0 \le 3\hat d(x,y)$, we are done.
\end{proof}

\section{Hyperbolicity of $\hat E$} \label{S:slim triangles}

The goal of this section is to prove the following.

\begin{theorem} \label{T:hyperbolicity of hat E} The space $\hat E$ is hyperbolic.
\end{theorem}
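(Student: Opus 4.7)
The plan is to invoke the Guessing Geodesics criterion (Proposition~\ref{T:bowditch}) with $\Upsilon=\vtx\subset \hat E$ as the reference set, which is $R_0$--dense by Lemma~\ref{L: collapsed vertex set dense}. This reduces the theorem to producing, for every pair $x,y\in \vtx$, a rectifiably path-connected subset $L(x,y)\subset \hat E$ containing both endpoints, such that (1) the family $\{L(x,y)\}$ forms $\delta$--slim triangles for some uniform $\delta$, and (2) $\mathrm{diam}(L(x,y))\le \delta$ whenever $\hat d(x,y)\le 3R_0$.

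For the candidate paths I would build $L(x,y)$ out of horizontal and vertical pieces adapted to the bundle structure $\hat\pi\colon \hat E\to \hat D$. Given $x,y\in \vtx$, pick representatives $\tilde x,\tilde y\in \bar E$ in the corresponding spines and concatenate: (a) a horizontal geodesic inside a chosen Teichm\"uller disk $\bar D_\bullet\subset \bar E$ whose projection to $\bar D$ is a $\bar\rho$--geodesic between $\bar\pi(\tilde x)$ and $\bar\pi(\tilde y)$, and (b) a flat geodesic in a common fiber connecting the two transported points. Pushing this concatenation forward by $\bar P$ produces $L(x,y)\subset \hat E$, which is a sequence of horizontal jumps (in the sense of Definition~\ref{D:combinatorial path}) interspersed with images of flat fiber geodesics; over horoball regions the flat geodesic is effectively replaced by motion along the tree $T_\alpha$, which is short thanks to the $\mathbb{R}$--tree structure from Lemma~\ref{L:quotient metric on E hat}.

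Property (2) should follow almost immediately: when $\hat d(x,y)\le 3R_0$, Lemma~\ref{L: combinatorial path} provides a combinatorial path of uniformly bounded length joining $x$ to $y$, whose image stays in a bounded region of $\hat E$, and $L(x,y)$ can be arranged to lie in a uniformly bounded neighborhood of such a path. The real content of the theorem is property (1), the $\delta$--slim triangles condition, and this is where I expect the main obstacle. For vertices $x,y,z\in \vtx$, the configuration of $\hat\pi(x),\hat\pi(y),\hat\pi(z)$ in $\hat D$ (which is itself hyperbolic, being a coned-off $\mathbb H^2$ with $1$--separated horoballs) controls the horizontal behavior through thin triangles in $\hat D$, but one still has to verify that the vertical flat-geodesic contributions in the fibers $E_X$ assemble into a slim triangle in $\hat E$.

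The hard part will be the flat-geometry case analysis. Flat geodesics in the universal cover $\widetilde S$ can bend at cone points and decompose into saddle connections in complicated ways, so naive CAT(0) thinness of the fibers is not sufficient by itself. The tools I would exploit are: the Veech dichotomy (Theorem~\ref{T:Veech_Dichotomy}), which guarantees that every saddle connection in $\bar E$ points in a parabolic direction $\alpha\in \CP$ and therefore becomes collapsible along horocycles in $\partial B_\alpha$; the uniform control on lengths and widths of saddle connections and strips over $\partial B_\alpha$ supplied by Lemma~\ref{L:strip-and-saddle-bound}; the fact that changing fibers by $f_{X,Y}$ is $e^{\rho(X,Y)}$--biLipschitz, so flat geodesics in nearby fibers are uniformly comparable; and the tree metric on each $T_\alpha$, which efficiently encodes how far geodesics in $E_X$ must "climb across strips" in direction $\alpha$. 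Organizing the argument by how the three sides of the would-be triangle interact with the collapsed horoballs, and repeatedly reducing to either a hyperbolic thin-triangle estimate in $\hat D$ or a tree-thin-triangle estimate in some $T_\alpha$, should yield the uniform $\delta$--slim triangle property and complete the verification of the criterion.
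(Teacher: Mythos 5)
Your overall strategy matches the paper's: apply the Guessing Geodesics criterion (Proposition~\ref{T:bowditch}) with $\Upsilon = \vtx$, and build candidate paths out of horizontal and vertical pieces adapted to the bundle structure. However, the concrete path construction you propose --- ``one horizontal geodesic to a common fiber, then one flat geodesic inside that fiber'' --- does not work, and this is not a minor technicality: the correct choice of paths is the main idea of the whole argument.

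The problem is that a flat geodesic $[f_Y(\tilde x), \tilde y]$ in a single fiber $E_Y$ is a concatenation of many saddle connections $\sigma_1\cdots\sigma_k$ pointing in many different parabolic directions $\alpha_1,\dots,\alpha_k$. The fiber $Y$ can lie on at most one horocycle $\partial B_\alpha$, so when you apply $\bar P$ at most the saddle connections in direction $\alpha$ collapse; all the others keep their full $E_Y$--length, because $\bar P$ is a local isometry off $\partial\bar E$ (Lemma~\ref{L:quotient metric on E hat}). Thus the image of your vertical piece in $\hat E$ is essentially the flat geodesic itself, \emph{not} ``motion along a tree $T_\alpha$.'' Two immediate consequences: (i) condition (2) of Proposition~\ref{T:bowditch} fails, since for $x,x'\in\vtx$ in the same tree and adjacent (so $\hat d(x,x')$ tiny), the representatives $\tilde x,\tilde x'$ in the spines can be arbitrarily far in $\bar E$, and the flat geodesic between them in a generic fiber stays long after collapsing; and (ii) the family of paths depends badly on the choice of representatives in each spine, so even defining a single well-behaved $L(u,v)$ for a pair of vertices is problematic. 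The paper's preferred paths $\varsigma(x,y)$ fix this by \emph{interleaving} horizontal and vertical pieces: they read off the decomposition $[f(x),f(y)]=\sigma_1\cdots\sigma_k$ in the base fiber $E_0$, and for each $\sigma_i$ they travel horizontally to the horocycle $\partial B_{\alpha_i}$ where that saddle connection is short, traverse it there, then move on. This is what makes the collapsed path short and representative-independent (see the proof of Lemma~\ref{L:L(u,v) slim}, where $\hat\varsigma(x,x')=\{u\}$ for $x,x'\in\theta^u\cap\Sigma$).

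The second gap is in the slim-triangles argument itself, which you describe as ``reducing to a hyperbolic thin-triangle estimate in $\hat D$ or a tree-thin-triangle estimate in some $T_\alpha$.'' This does not suffice even with the correct preferred paths, because the three sides of a reference triangle in $E_0$ involve saddle connections in unrelated directions that live over different horocycles, and one must track how the horizontal pieces of the three collapsed preferred paths move relative to one another across $\hat D$. The crux of the paper's proof is the Fan Lemma (Lemma~\ref{L:slim_fans}) and its Claim~\ref{claim:fan_hoiz_balancpts}: for a fan decomposed into Euclidean subtriangles with balance points $b_1,\dots,b_k$ in the Teichm\"uller disk, one shows that the single hyperbolic geodesic $[b_1,b_k]$ passes, \emph{in order}, uniformly close to each horoball $B_{\alpha_1},\dots,B_{\alpha_k}$ --- a fact proved via an angle-monotonicity argument for the subtriangles and the area bound of Lemma~\ref{L:bounded diameter Euclidean}. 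This ordering phenomenon is what synchronizes the horizontal progress of the two long sides of the fan, and nothing in your sketch produces it. Without it, you have no way to compare the collapsed preferred path $\hat\varsigma(x,z)$ with the two-sided concatenation $\hat\varsigma(x,y)\cup\hat\varsigma(y,z)$ beyond coarse generalities, and the uniform $\delta$ is out of reach.
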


This is achieved by applying the hyperbolicity criterion given by Proposition \ref{T:bowditch} (Guessing geodesics) to the collection of  (collapsed) \emph{preferred paths}---a special type of combinatorial path as in Definition~\ref{D:combinatorial path}---which we now describe.

\subsection{Preferred paths} \label{S:preferred paths}

For any two points $x,y \in \Sigma$, our next goal is to construct a particular path $\varsigma(x,y)$ from $x$ to $y$.
Recall that the map $f = f_{X_0} \colon E \to E_0 = E_{X_0}$ was defined in \S\ref{sec:moving_between_fibers}.  Further, recall from \S\ref{sec:bundle_metrics} that for each $\alpha \in \CP$ we have fixed a point  $X_\alpha\in B_\alpha$.
To begin, we connect the points $f(x),f(y) \in E_0$ by a geodesic segment $[f(x),f(y)]$ in the flat metric $q$ of $E_0$.  This geodesic is a concatenation of saddle connections in $E_0$,
\[ [f(x),f(y)] = \sdl_1 \sdl_2 \cdots \sdl_k, \]
where the saddle connection $\sdl_i$ has direction $\alpha_i \in \CP$.
For each $1 \leq i \leq k$, let $z_{i-1},z_i$ be the initial and terminal endpoints, respectively, of $\sdl_i$.

Roughly speaking, $\varsigma(x,y)$ is the path constructed as follows.  First, start at $x$ and follow a horizontal geodesic in $E$ to the fiber $E_{X_{\alpha_1}} \subset \partial \mathcal B_{\alpha_1}$ where saddle connections in direction $\alpha_1$ are short.  The path then traverses the saddle connection $\sdl_1$ in $E_{X_{\alpha_1}}$, then continues on along a horizontal geodesic to the fiber $E_{X_{\alpha_2}} \subset \partial \mathcal B_{\alpha_2}$ and traverses the next saddle connection $\sdl_2$.  The path continues in this way traversing horizontal geodesic segments followed by short saddle connections, as dictated by $[f(x),f(y)]$, until all of the saddle connections $\sigma_i$ have been traversed and the path can go to $y$ along a horizontal geodesic.

More formally, we let $\varsigma(x,y)$ be the concatenation of segments
\begin{equation} \label{Eq:pref path}
 \varsigma(x,y) = h_0\gamma_1h_1 \gamma_2 h_2 \cdots \gamma_kh_k 
 \end{equation}
defined as follows.
For each $i = 1,\ldots,k$, $\gamma_i$ is the saddle connection
\[ \gamma_i = f_{X_{\alpha_i}}(\sdl_i)  \subset E_{X_{\alpha_i}}.\]
The paths $h_i$ are horizontal geodesic segments making $\varsigma(x,y)$ into a path.  More precisely, let $\gamma_i^-,\gamma_i^+$ denote the initial and terminal endpoints of $\gamma_i$, respectively, and observe that for $i = 1,\ldots,k-1$, we have
\[ \gamma_i^+,\gamma_{i+1}^- \in D_{z_i}, \quad x,\gamma_0^- \in D_{z_0}, \mbox{ and } \, \, y,\gamma_k^+ \in D_{z_k}.\]
Then for $i = 1,\ldots,k-1$, $h_i$ is the geodesic from $\gamma_i^+$ to $\gamma_i^-$ in $D_{z_i}$, and $h_0$ and $h_k$ are the geodesic segments from $x$ to $\gamma_0^-$ and $\gamma_k^+$ to $y$ in $D_{z_0}$ and $D_{z_k}$, respectively.
We call the segments $\gamma_i$ the {\em saddle pieces} and the $h_i$ the {\em horizontal pieces}.   We note that when $\gamma_i$ and $\gamma_{i+1}$ are in the same direction, then $h_i$ is degenerate.  
Also observe that the construction is symmetric: $\varsigma(x,y)$ and $\varsigma(y,x)$ are the same paths with opposite orientations.

We call $\varsigma(x,y)$ the {\em preferred path} from $x$ to $y$.\label{ind:pref path} 
We can push preferred paths forward via the $1$--Lipschitz map $P\colon E\to \bar E$ (Lemma \ref{L:P is also 1-Lip}) and thus consider the image $\hat \varsigma(x,y) = P(\varsigma(x,y))$.
We will call $\hat \varsigma(x,y)$ a {\em collapsed preferred path}.  Note that collapsed preferred paths are combinatorial paths. 
The key fact about these paths needed to prove that $\hat E$ is hyperbolic is the following.

\begin{theorem} \label{T:preferred slim triangles} There exists $\delta > 0$ so that collapsed preferred paths form $\delta$--slim triangles.  That is, for any $x,y,z \in \Sigma$, we have
\[ \hat \varsigma(x,y) \subset N_\delta(\hat \varsigma(x,z)\cup\hat \varsigma(y,z)).\]
\end{theorem}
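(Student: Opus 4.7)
The plan is to verify the slim-triangle property by combining the hyperbolicity of the base $\hat D$ with a careful analysis of how the flat-geodesic data defining the three preferred paths fit together. I would proceed in two stages: a projection to $\hat D$ to control the horizontal behavior of collapsed preferred paths, and a case analysis matching fibers and trees to control the vertical behavior.

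First I would show that for any $x,y \in \Sigma$, the image $\hat\pi(\hat\varsigma(x,y)) \subset \hat D$ is a uniform quasi-geodesic. Each horizontal piece $h_i$ lies in a Teichm\"uller disk $D_{z_i}$ and projects via $\hat\pi$ to a geodesic segment in $\hat D_{z_i} \cong \hat D$ joining the collapsed cusps $p(B_{\alpha_i})$ and $p(B_{\alpha_{i+1}})$, while each saddle piece $\gamma_i \subset E_{X_{\alpha_i}} \subset \partial \CB_{\alpha_i}$ projects to the single point $p(B_{\alpha_i})$. Since the visited cusps are $1$--separated in $\hat D$, the concatenation of these segments is a quasi-geodesic with uniform constants. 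Gromov hyperbolicity of $\hat D$, which is quasi-isometric to $\mathbb H^2$ with the horoballs $B_\alpha$ coned off, then yields uniformly $\delta_0$--slim triangles for the projected preferred paths.

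Given $p \in \hat\varsigma(x,y)$, the downstairs slim-triangle property produces a point $\bar q$ on $\hat\pi(\hat\varsigma(x,z) \cup \hat\varsigma(y,z))$ with $\hat\rho(\hat\pi(p),\bar q) \le \delta_0$. The heart of the proof is promoting this to a bound $\hat d(p,q) \le \delta$ for a suitable actual point $q$ upstairs. The clean case is when $\hat\pi(p)$ lies close to a collapsed cusp $p(B_\alpha)$: then both $p$ and the relevant matching point $q$ on one of the other two preferred paths lie near the tree $T_\alpha \subset \hat E$, and Lemma \ref{L:strip-and-saddle-bound} together with the bounded spine diameter in $\partial \CB_\alpha$ (via Lemma \ref{L:quotient metric on E hat}) controls their $\hat d$--distance through a bounded excursion within $T_\alpha$.

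The main obstacle is the ``thick'' case where $\hat\pi(p)$ is far from every cusp visited by the three paths; here the CAT(0) geometry of $E_0$ alone is inadequate, since flat geodesic triangles in $E_0$ can contain large Euclidean regions whose midpoints are far from the other sides. To handle this I would exploit the Veech dichotomy (Theorem \ref{T:Veech_Dichotomy}): every saddle-connection subsegment of the flat geodesic $[f(x),f(y)]$ has direction in $\CP$, and a cone-point-matching analysis of the flat geodesic triangle $[f(x),f(y)] \cup [f(y),f(z)] \cup [f(z),f(x)]$ in $E_0$ shows that any long run of $[f(x),f(y)]$ in direction $\alpha$ lying outside a bounded core of the flat triangle must be accompanied by a parallel run of one of the other two sides in the same direction $\alpha$. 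The corresponding saddle pieces of the three preferred paths then all lie in the common tree $T_\alpha \subset \hat E$, and a bounded-combinatorics argument in $T_\alpha$ using Lemma \ref{L:strip-and-saddle-bound}(2) gives a uniform bound on $\hat d(p,q)$. Combining the cusp and thick cases yields the desired uniform $\delta$, depending only on $(S,X_0,q)$.
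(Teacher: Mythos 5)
The strategy of projecting to $\hat D$, establishing slimness downstairs, and ``promoting'' it to $\hat E$ does not work as described, and in particular sidesteps the genuinely hard part of the paper's argument.

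The first gap is the claim that $\hat\pi(\hat\varsigma(x,y))$ is a uniform quasi-geodesic because the visited cusps are $1$--separated. Separation only gives a linear lower bound on the \emph{length} of the concatenation, not a linear lower bound on $\hat\rho$ between its endpoints: a priori the horoballs $B_{\alpha_1},\dotsc,B_{\alpha_k}$ could cluster or backtrack in $\hat D$. Ruling this out is precisely the content of the paper's Claim~\ref{claim:fan_hoiz_balancpts}, which shows (via the balance points $b_i$ and a monotonicity argument on the angles $\lambda_i,\rho_i$) that the horoballs $B_{\alpha_i}$ are visited \emph{in order} along a single geodesic $h$ in $D$. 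Without such an ordering statement one cannot conclude quasi-geodesicity; and once one has it, one is essentially doing the paper's argument. Also, the horizontal pieces $h_i$ are geodesics of $D$ between the fixed basepoints $X_{\alpha_i}$, not geodesics of $\hat D$ between cusps, so even the building blocks of the ``downstairs quasi-geodesic'' are not what the proposal says they are.

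The second, and more serious, gap is in the promotion step. In the ``cusp case'' you write that $p$ and $q$ both lying near $T_\alpha$ plus ``bounded spine diameter'' (Lemma~\ref{L:strip-and-saddle-bound}) controls $\hat d(p,q)$. But $T_\alpha$ is an infinite tree and the spines $\theta^v_X$ are unbounded; Lemma~\ref{L:strip-and-saddle-bound} only bounds individual saddle connections and strip widths. Two points of $T_\alpha$ can be arbitrarily far apart, so ``near the same collapsed cusp in $\hat D$'' gives no distance bound in $\hat E$ whatsoever. In the ``thick case'' the proposed cone-point-matching mechanism fails for the fan configuration, which is exactly the hard case: in a fan $\Delta(x,y,z)$ with $[f(x),f(y)]$ and $[f(y),f(z)]$ single saddle connections and $[f(x),f(z)] = \sdl_1\cdots\sdl_k$, the directions $\alpha_1,\dotsc,\alpha_k$ of the long side are all distinct from one another and from the directions of the two short sides. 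There is no parallel run on the opposite side, so the Veech dichotomy plus parallelism yields nothing. The paper instead handles this by (i) bounding the \emph{area} of all Euclidean subtriangles uniformly (Lemma~\ref{L:bounded diameter Euclidean}), which localizes the balance points near the relevant horoballs; (ii) proving the ordering of the balance points along $h$ (Claim~\ref{claim:fan_hoiz_balancpts}); and then (iii) decomposing general nondegenerate triangles into alternating fans at pivot cone points (Lemma~\ref{L:length one side slim}) and finally into at most three such pieces (Lemma~\ref{L:triangle decomposition}). These ingredients are absent from the proposal and are not replaceable by the mechanisms you describe.
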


We divide the proof of this theorem into a sequence of lemmas, which requires some further setup and notation, and occupies the bulk of this section.
Before we do that, however, we assume Theorem~\ref{T:preferred slim triangles} and use it to prove Theorem~\ref{T:hyperbolicity of hat E}.

Although the preferred paths depend on the choice of basepoints $X_\alpha$ for each $\alpha \in \CP$, all choices produce collapsed preferred paths that have uniformly bounded Hausdorff distance to geodesics (see Proposition~\ref{T:bowditch}), and hence each other.  The fact that that all choices are uniformly bounded Hausdorff distance can also be shown more directly via the arguments that follow.  In any case, we do not need this fact to apply Proposition~\ref{T:bowditch}.

\subsection{Hyperbolicity}
Given $u,v \in \vtx$, define
\[ L(u,v) =\bigcup \hat \varsigma(x,y),\]
where the union is taken over all $x \in \theta^u\cap \Sigma$ and $y \in \theta^v \cap \Sigma$ (see \S\ref{S:spines} for notation).
Since $\varsigma(x,y)$ is a finite length path connecting $x$ and $y$ and $P$ is Lipschitz,
it follows that $L(u,v)$ is a rectifiably path connected set containing both $u$ and $v$.

\begin{lemma} \label{L:L(u,v) slim} Suppose $u,v \in \vtx$. For any $x \in \theta^u \cap \Sigma$ and $y \in \theta^v\cap \Sigma$ we have
\begin{equation}\label{E:just one varsigma} L(u,v) \subset N_{2\delta}(\hat \varsigma(x,y))
\end{equation}
where $\delta > 0$ is the constant from Theorem~\ref{T:preferred slim triangles}.
\end{lemma}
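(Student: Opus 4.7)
The plan is to reduce the statement to Theorem~\ref{T:preferred slim triangles} via two applications of slim triangles, followed by a diameter bound for collapsed preferred paths whose endpoints lie in a common spine.

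More precisely, fix an arbitrary pair $x' \in \theta^u \cap \Sigma$, $y' \in \theta^v \cap \Sigma$; the goal is $\hat\varsigma(x', y') \subset N_{2\delta}(\hat\varsigma(x, y))$. First, I would apply Theorem~\ref{T:preferred slim triangles} to the triple $(x', y', y)$, obtaining $\hat\varsigma(x', y') \subset N_\delta\big(\hat\varsigma(x', y) \cup \hat\varsigma(y', y)\big)$. Then, applying it to the triple $(x', y, x)$ gives $\hat\varsigma(x', y) \subset N_\delta\big(\hat\varsigma(x', x) \cup \hat\varsigma(y, x)\big)$. Using the symmetry $\hat\varsigma(a, b) = \hat\varsigma(b, a)$ and combining, one finds
$$\hat\varsigma(x', y') \subset N_{2\delta}\big(\hat\varsigma(x, y) \cup \hat\varsigma(x, x') \cup \hat\varsigma(y, y')\big).$$

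The key observation at this point is that $\bar P(x) = \bar P(x') = u$ (since $\theta^u$ is collapsed by $\bar P$ to the single vertex $u$) and similarly $\bar P(y) = \bar P(y') = v$. Thus both endpoints of $\hat\varsigma(x, x')$ equal $u \in \hat\varsigma(x, y)$, and both endpoints of $\hat\varsigma(y, y')$ equal $v \in \hat\varsigma(x, y)$. Consequently, the lemma will follow once we establish a uniform bound on the diameter of $\hat\varsigma(x, x')$ (and of $\hat\varsigma(y, y')$) whenever both endpoints lie in a common spine; specifically, it suffices to prove $\hat\varsigma(x, x') \subset N_\delta(u)$ and $\hat\varsigma(y, y') \subset N_\delta(v)$, from which $\hat\varsigma(x, x') \cup \hat\varsigma(y, y') \subset N_\delta(\hat\varsigma(x, y))$.

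The main obstacle is this diameter bound. The subtlety is that although $f(x), f(x') \in \theta^u_{X_0}$ lie in a common spine in the reference fiber $E_0$, the flat geodesic $[f(x), f(x')]$ used to construct $\varsigma(x, x')$ may leave $\theta^u_{X_0}$ and traverse saddle connections in other parabolic directions, so the preferred path itself need not stay near the thickened spine bundle $\bTheta^u$. I would expect this bound to be either absorbed into the constant $\delta$ of Theorem~\ref{T:preferred slim triangles} (possibly enlarging $\delta$ after revisiting its proof), or proved separately using the flat-geometric control on spines from Lemma~\ref{L:strip-and-saddle-bound}, exploiting that the horizontal pieces of $\varsigma(x, x')$ first route $f(x)$ and $f(x')$ into the fiber $E_{X_{\alpha(u)}}$ over $\partial B_u$, where the $u$-spine has uniformly bounded diameter. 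Once this estimate is in hand, it combines with the two applications of slim triangles above to yield the lemma.
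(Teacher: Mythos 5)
Your skeleton is right---two applications of Theorem~\ref{T:preferred slim triangles} plus control of $\hat\varsigma(x,x')$ and $\hat\varsigma(y,y')$---but the step you flag as ``the main obstacle'' is precisely where the argument is missing, and the worry that motivates it is unfounded. The flat geodesic $[f(x),f(x')]$ in $E_0$ does \emph{not} leave $\theta^u_{X_0}$: spines are convex in the CAT(0) fiber. At a cone point of cone angle $k\pi$, the separatrices of the $\alpha(u)$--foliation (which are exactly the saddle connections of the spine through that point) partition the cone angle into $k$ wedges of angle $\pi$ each, so any edge path in the spine tree makes angle $\geq\pi$ on both sides of every interior cone point and is therefore a local, hence in CAT(0) a global, geodesic. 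Consequently every saddle connection of $[f(x),f(x')]$ has direction $\alpha(u)$, all intermediate horizontal pieces of $\varsigma(x,x')$ degenerate, the saddle pieces all lie in $\theta^u_{X_{\alpha(u)}}$, and the horizontal pieces $h_0,h_k$ stay over the horoball $B_{\alpha(u)}$. Thus $P$ sends the entire path to the single vertex, i.e.\ $\hat\varsigma(x,x')=\{u\}$ exactly (and likewise $\hat\varsigma(y,y')=\{v\}$). This exact identity is what the paper uses.

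Without it, your proposed repairs do not work. The spine $\theta^u_{X_{\alpha(u)}}$ is an infinite tree in the universal cover of the fiber and has \emph{unbounded} diameter, so the claim that ``the $u$--spine has uniformly bounded diameter'' is false; and if $\hat\varsigma(x,x')$ could be arbitrarily large, no enlargement of the slimness constant $\delta$ from Theorem~\ref{T:preferred slim triangles} would absorb it. Moreover, since you only aim for the approximate estimate $\hat\varsigma(x,x')\subset N_\delta(u)$, your chain of inclusions would yield $\hat\varsigma(x',y')\subset N_{3\delta}(\hat\varsigma(x,y))$ rather than the stated $N_{2\delta}$; the equality $\hat\varsigma(x,x')=\{u\}$ is exactly what saves that extra additive $\delta$.
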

\begin{proof}
Let $x' \in \theta^u \cap \Sigma$ and $y' \in \theta^v \cap \Sigma$ be any points. 
Since $[f(x),f(x')]$ is a concatenation of saddle connections all in direction $\alpha(u)$ and since $x,x' \in \mathcal B_{\alpha(u)}$, we have $\hat \varsigma(x,x') = \{u\}$.  Similarly, $\hat \varsigma(y,y')=\{v\}$.
Now by Theorem~\ref{T:preferred slim triangles}, we have
\[ \hat \varsigma(x',y) \subset N_\delta(\hat \varsigma(x,y) \cup \hat \varsigma(x,x')) = N_\delta(\hat \varsigma(x,y)),\]
and
\[ \hat \varsigma(x',y') \subset N_\delta(\hat \varsigma(x',y) \cup  \hat \varsigma(y,y')) = N_\delta(\hat \varsigma(x',y)) \subset N_{2 \delta}(\hat \varsigma(x,y)).\]
Since $x',y'$ were arbitrary, the result follows.
\end{proof}

Recall from Lemma~\ref{L: collapsed vertex set dense} that $\vtx$ is $R_0$ dense in $\hat E$.
\begin{lemma} \label{L:L(u,v) bdd diam} There exists a constant $C >0$ so that if $u,v \in \vtx$ with $\hat d(u,v) \leq 3R_0$, then $\diam(L(u,v)) \leq C$.
\end{lemma}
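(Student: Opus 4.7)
The plan is to combine Lemma~\ref{L:L(u,v) slim} with a cocompactness argument. I will establish three steps: (i) $\diam L(u,v)<\infty$ for each individual pair $u,v\in\vtx$; (ii) $\diam L$ is $\Gamma$-invariant up to a uniform additive error; and (iii) only finitely many $\Gamma$-orbits of pairs $(u,v)$ satisfy $\hat d(u,v)\le 3R_0$. The bound $C$ then emerges as the maximum of $\diam L(u,v)$ over the finitely many orbit representatives plus the additive error from (ii).

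For (i), I will fix any $x_0\in\theta^u\cap\Sigma$ and $y_0\in\theta^v\cap\Sigma$, which are nonempty since every spine $\theta^u_X$ contains cone points of $\Sigma_X$. The preferred path $\varsigma(x_0,y_0)$ is, by its construction in \S\ref{S:preferred paths}, a finite concatenation of saddle connections and horizontal geodesic segments, each of finite length, hence rectifiable. Since $P\colon E\to\hat E$ is $1$-Lipschitz by Lemma~\ref{L:P is also 1-Lip}, the image $\hat\varsigma(x_0,y_0)$ has finite length and hence finite diameter. Lemma~\ref{L:L(u,v) slim} then gives $\diam L(u,v)\le \diam\hat\varsigma(x_0,y_0)+4\delta<\infty$.

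For (ii), $\Gamma$ acts by isometries on $\hat E$ (Theorem~\ref{T:main full}) and permutes the spines via $g\theta^u=\theta^{gu}$; moreover, $P$ is $\Gamma$-equivariant, so the saddle-piece projections transform as $P(g\gamma_i)=gP(\gamma_i)$. Although the fixed basepoints $\{X_\alpha\}_{\alpha\in\CP}$ cannot be chosen strictly $\Gamma$-equivariantly (parabolic stabilizers act freely on horocycles), two choices differ only by bounded parabolic translations, so $g\hat\varsigma(x,y)$ and $\hat\varsigma(gx,gy)$ lie within uniformly bounded Hausdorff distance in $\hat E$. Consequently $\diam L(gu,gv)$ and $\diam L(u,v)$ differ by at most a universal additive constant. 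For (iii), the cocompact $\Gamma$-action on $\bar E$ descends to $\hat E$, and the uniform positive lower bound on tree edge lengths together with $1$-separation of distinct trees makes $\vtx$ a discrete subset of $\hat E$; hence $\vtx/\Gamma$ is finite. Given a close pair $(u,v)$, Lemma~\ref{L: combinatorial path} supplies a combinatorial path of length at most $3CR_0$, and local finiteness of the horoball collection $\{B_\alpha\}_{\alpha\in\CP}$ in $D$ (from the lattice property of $G$) then confines $v$ to finitely many orbits modulo $\stab_\Gamma(u)$.

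Taking the maximum of $\diam L(u,v)$ over the finite set of orbit representatives, adjusted by the additive error from step (ii), yields the desired constant $C$. The main obstacle will be step (iii), a local-finiteness statement for $\vtx$ in $\hat E$-balls of bounded radius, which ultimately reduces to cocompactness of $\Gamma$ together with the local finiteness of the horoball family in the lattice Veech disk $D$.
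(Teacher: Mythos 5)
Your approach (cocompactness and orbit--counting) is genuinely different from the paper's, which instead applies Lemma~\ref{L: combinatorial path} to produce a combinatorial path from $u$ to $v$ of uniformly bounded length and with a uniformly bounded number $n$ of horizontal jumps (each itself a collapsed preferred path), and then iterates Theorem~\ref{T:preferred slim triangles} $n$ times to place $\hat\varsigma(x,y)$ in an $n\delta$--neighborhood of that path, directly bounding its diameter. That route requires no local finiteness claims at all.

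The gap in your argument is step~(iii). You assert that discreteness of $\vtx$ plus cocompactness makes it essentially locally finite, but $\hat E$ is \emph{not} a proper metric space: each vertex of $T_\alpha$ has infinite valence (the spine $\theta^v_X$ lives in the universal cover $E_X$, so it meets infinitely many strips), hence the ball $B_{3R_0}(u)\cap\vtx$ is infinite, not merely ``discrete.'' Showing that this infinite set falls into finitely many $\stab_\Gamma(u)$--orbits is nontrivial; the justification you offer --- local finiteness of the horoball family $\{B_\alpha\}$ in $D$ --- only controls the \emph{horizontal} part (how many distinct parabolic directions can be reached by bounded--length jumps), not the infinitely many cone points in $\theta^u\cap\bar\Sigma$ from which a jump can originate, which is where the infinite valence comes from. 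A correct version would have to track both, essentially by running an induction along the combinatorial path of Lemma~\ref{L: combinatorial path} and invoking cocompactness of the $\stab_\Gamma(v_i)$--action on each intermediate spine; once you do this, you are already using the same lemma the paper uses, with considerably more overhead. Step~(ii) is also under-justified as stated: since the basepoints $X_\alpha$ cannot be chosen $\Gamma$--equivariantly, you need Lemma~\ref{L:geodesics between horoballs} to show $g\hat\varsigma(x,y)$ and $\hat\varsigma(gx,gy)$ have uniformly bounded Hausdorff distance in $\hat E$, and this deserves an explicit argument rather than the one-sentence appeal to ``bounded parabolic translations.''
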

\begin{proof} Let $ x \in \theta^u \cap \Sigma$ and $y \in \theta^v \cap \Sigma$.  By Lemma~\ref{L: combinatorial path}, there is a bounded length combinatorial path connecting $P(x)=u$ to $P(y)=v$ which is a concatenation of $n$ horizontal jumps, where the bound on the length and the number $n$ depends only on $R_0$.  Each such horizontal jump is a collapsed preferred path.

By iterated application of Theorem~\ref{T:preferred slim triangles} one can show that $\hat \varsigma (x,y)$ is contained in the $n\delta$--neighborhood of this combinatorial path, and hence has uniformly bounded diameter.
\end{proof} 

The proof of Theorem \ref{T:hyperbolicity of hat E} is now an easy consequence:

\begin{proof}[Proof of Theorem~\ref{T:hyperbolicity of hat E}]  Since $\vtx \subset \hat E$ is $R_0$ dense by Lemma~\ref{L: collapsed vertex set dense}, we need only verify the sets $L(u,v)$ satisfy the two conditions of Proposition~\ref{T:bowditch}.  Theorem~\ref{T:preferred slim triangles} and Lemma~\ref{L:L(u,v) slim} immediately imply the sets $L(u,v)$ form $3\delta$--slim triangles, verifying condition (1), and Lemma~\ref{L:L(u,v) bdd diam} precisely gives condition (2). 
\end{proof}

We now record a corollary of Theorem \ref{T:hyperbolicity of hat E}:

\begin{corollary}\label{cor:Cayley_graph} 
Let $\Upsilon_1,\ldots,\Upsilon_k < \Gamma$ be representatives of the conjugacy classes of vertex subgroups, and let $\mathcal S$ be any finite generating set for $\Gamma$. Then the Cayley graph $Cay(\Gamma,S\cup\bigcup \Upsilon_i)$ is $\Gamma$--equivariantly quasi-isometric to $\hat{E}$, and in particular it is hyperbolic.
\end{corollary}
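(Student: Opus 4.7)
The plan is to realize the orbit map $\Phi\colon\Gamma\to\hat E$, defined by $\Phi(g)=g\hat x_0$, as a $\Gamma$-equivariant quasi-isometry from $Cay(\Gamma,\mathcal S\cup\bigcup_i\Upsilon_i)$ to $\hat E$; hyperbolicity of the Cayley graph then follows at once from Theorem~\ref{T:hyperbolicity of hat E} by quasi-isometry invariance of hyperbolicity (and is independent of the particular finite $\mathcal S$, since any two finite generating sets give bi-Lipschitz equivalent word metrics). I would let $v_i\in\vtx$ denote the vertex stabilized by $\Upsilon_i$ and choose the basepoint $x_0\in \theta^{v_1}\subset\bar E$ so that $\hat x_0:=\bar P(x_0)=v_1\in\vtx$. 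The map $\Phi$ is Lipschitz: any element of $\mathcal S$ moves $\hat x_0$ a uniformly bounded $\hat d$-distance (as $\Gamma$ acts isometrically on $\hat E$, the action descending from $\bar E$ by $\Gamma$-equivariance of $\bar P$), and for any $h\in \Upsilon_i$, the fact that $h$ fixes $v_i$ gives
\[\hat d(h\hat x_0,\hat x_0)\le \hat d(h\hat x_0,v_i)+\hat d(v_i,\hat x_0)=2\hat d(\hat x_0,v_i),\]
uniformly bounded in $h$ and over the finitely many $i$. Coarse surjectivity is immediate from cocompactness of $\Gamma\curvearrowright\hat E$, itself inherited via $\bar P$ from the compact quotient $\bar E/\Gamma$.

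The main content is the quasi-isometric embedding property: given $g\in\Gamma$ with $r:=\hat d(\hat x_0,g\hat x_0)$, bound $d_{Cay}(e,g)\le Kr+K'$. The case $g\in \Upsilon_1$ (for which $r=0$) is trivial since $d_{Cay}(e,g)\le 1$. Otherwise, Lemma~\ref{L: combinatorial path} applied to the pair $v_1,\,gv_1\in\vtx$ produces a combinatorial path $v_1=w_0,w_1,\dots,w_n=gv_1$ in $\hat E$ consisting of $n\le Cr$ horizontal jumps of respective $\hat d$-lengths $L_j$ with $\sum_j L_j\le Cr$. Writing $w_j=a_jv_{i_j}$ with $a_0=e$ and $a_n=g$ (each $a_j$ determined only up to $\Upsilon_{i_j}$ on the right), the estimate reduces by telescoping to the per-jump bound $d_{Cay}(a_j,a_{j+1})=O(L_j+1)$, which sums to $O(r)$.

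This per-jump bound is the main technical obstacle. My approach is to lift the $j$-th horizontal jump to a $\bar d$-geodesic in some $\bar D_z\subset\bar E$ of length exactly $L_j$ joining a point $x\in\theta^{w_j}$ to a point $y\in\theta^{w_{j+1}}$; lengths match because the interior of this lifted geodesic avoids $\partial\bar E$ by definition of a horizontal jump, and $\bar P$ is a local isometry on $\bar E\setminus\partial\bar E$ by Lemma~\ref{L:quotient metric on E hat}. The crucial external input -- implicit in the introduction's graph-manifold description of the boundary of $\bar E/\Gamma$ -- is that each $\Upsilon_i$ acts cocompactly on its spine bundle $\theta^{v_i}$, being the fundamental group of the corresponding Seifert piece. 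Conjugating, $a_j\Upsilon_{i_j}a_j^{-1}$ acts cocompactly on $\theta^{w_j}$, so there is some $b_j\in a_j\Upsilon_{i_j}$ with $\bar d(b_jx_0,x)\le D$ for a uniform constant $D$ (one checks $D$ may be chosen independent of $j$ using the finitely many $i_j$'s and the isometry of the $\Gamma$-action); similarly choose $b_{j+1}\in a_{j+1}\Upsilon_{i_{j+1}}$ with $\bar d(b_{j+1}x_0,y)\le D$. Then $\bar d(b_jx_0,b_{j+1}x_0)\le L_j+2D$, so the Milnor--\v{S}varc quasi-isometry $\Gamma\to\bar E$ from the proper cocompact isometric $\Gamma$-action on $\bar E$ yields $d_{\mathcal S}(b_j,b_{j+1})=O(L_j+1)$. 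Since $a_j$ and $b_j$ lie in a common $\Upsilon_{i_j}$-coset we have $d_{Cay}(a_j,b_j)\le 1$, and likewise $d_{Cay}(b_{j+1},a_{j+1})\le 1$, yielding the per-jump bound and completing the proof upon summation.
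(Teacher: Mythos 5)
Your proposal is correct in substance but follows a genuinely different route from the paper's proof. The paper handles this corollary in one stroke by invoking a relative version of the Milnor--\v{S}varc lemma (Charney--Crisp, Theorem~5.1), after verifying its three hypotheses for the $\Gamma$-action on $\hat E$: cocompactness (pushed forward from $\bar E$ via the continuous surjection $\bar P$), that point stabilizers are trivial or conjugate into some $\Upsilon_i$, and that orbits are discrete (checked separately outside the trees $T_\alpha$, at tree vertices, and along tree edges, using Lemma~\ref{L:quotient metric on E hat}). You instead reprove the relative Milnor--\v{S}varc statement by hand in this setting: you build the orbit map explicitly, and for the quasi-isometric embedding direction you feed in Lemma~\ref{L: combinatorial path} to decompose $[\hat x_0,g\hat x_0]$ into $O(r)$ horizontal jumps of total $\hat d$-length $O(r)$, then convert each jump into a bounded number of Cayley-graph moves by sandwiching the ordinary Milnor--\v{S}varc quasi-isometry $\Gamma\to\bar E$ between cone-moves. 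The trade-off is that the paper's proof is shorter and sidesteps the combinatorial-path machinery, while yours is self-contained and makes the constants traceable. Two details you would need to firm up: (i) you invoke cocompactness of $\Upsilon_i\curvearrowright\theta^{v_i}$, which is not stated as such in the paper; it does follow from the graph-manifold discussion in \S\ref{S:Kapovich-Leeb distances} (the vertex space is $\Upsilon_i$-cocompact and $\theta^{v_i}$ is a closed $\Upsilon_i$-invariant deformation retract of it), but you should say so. (ii) As you note parenthetically, $b_jx_0$ itself is not the right point to compare with $x\in\theta^{w_j}$ when $i_j\neq 1$, since $b_jx_0\in\theta^{b_jv_1}\neq\theta^{w_j}$; you should fix basepoints $p_i\in\theta^{v_i}\cap\bar\Sigma$ for each $i$, run the cocompactness argument with $b_jp_{i_j}$, and then absorb $\max_i\bar d(p_i,x_0)$ into the constant. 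With those adjustments the argument goes through.
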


\begin{proof}
 This follows from Theorem \ref{T:hyperbolicity of hat E} and the fact that the Cayley graph described in the statement is quasi-isometric to $\hat E$ by the version of the Schwartz-Milnor Lemma given by \cite[Theorem 5.1]{CharneyCrisp}.

 In fact, the action of $\Gamma$ on $\hat{E}$ is cocompact since the action of $\Gamma$ on $\bar E$ is, and the natural map from $\bar E$ to $\hat{E}$ is continuous. Point-stabilizers for the action on $\hat E$ are either trivial or conjugate into one of the subgroups described in the statement. Finally, discreteness of orbits follows considering separately points outside of the $T_\alpha$ (which have neighborhoods isometric to open sets of $\bar E$), vertices of the $T_\alpha$, and points along edges of the $T_\alpha$, in the latter two cases appealing to Lemma \ref{L:quotient metric on E hat}.
\end{proof}

We now move on to the analysis of triangles of collapsed preferred paths necessary to prove Theorem~\ref{T:preferred slim triangles}.

\subsection{(Non)degenerate triangles} \label{S:nondeg triangles}
The proof of Theorem \ref{T:hyperbolicity of hat E} relies on an analysis of geodesic triangles in the base fiber $E_0$ with cone point vertices.  Given any $x,y,z \in \Sigma$, write
\[ \Delta(x,y,z) = \Delta(f(x),f(y),f(z)) = [f(x),f(y)] \cup [f(y),f(z)] \cup [f(z),f(x)] \]
for the associated geodesic {\em reference triangle} in $E_0$.\label{ind:reference triangles}
Such a triangle may have some {\em degenerate corners} where the initial geodesic segments emanating from a vertex share a number of saddle connections.  See Figure \ref{F:general triangle}.  As illustrated in the figure, there is a subtriangle $\Delta' \subset \Delta$, any two distinct edges of which intersect in exactly one point (an endpoint). The fact that geodesic triangles in $E_0$ actually fit this description is easily deduced from the fact that the flat metric on $E_0$ is uniquely geodesic, as it is CAT(0). If $\Delta = \Delta'$, then we say that $\Delta$ is {\em nondegenerate}, and otherwise it is {\em degenerate}.

Let $\Delta^\varsigma(x,y,z) \subset E$ be the triangle of preferred paths and $\Delta^{\hat \varsigma}(x,y,z) \subset \hat E$ its image in the collapsed space.  We say that these triangles are degenerate or nondegenerate according to whether $\Delta(x,y,z)$ is.\label{ind:pref ref triangles}

\begin{figure}[h]
\begin{center}
\begin{tikzpicture}[scale = .5]
\draw[ultra thick] (0,0) --  (5,1) -- (10,-1) -- (7,1) -- (6,2) -- (5,6) -- (4.5,4) -- (3,1.5) -- (0,0); 
\draw (5,6) -- (5,6.5) -- (4.5,7) -- (5,8);
\draw (10,-1) -- (10.5,-1.25) -- (11,-.5) -- (11.5,-1) -- (12,-1);
\draw (0,0) -- (-.5,0) -- (-1,.5) -- (-1.5,0) -- (-2,0) -- (-2.5,-.5) -- (-3,-.5);
\end{tikzpicture}
\caption{A general triangle $\Delta$ in $E_0$.  The nondegenerate subtriangle $\Delta'\subset \Delta$ is shown in bold.}
\label{F:general triangle}
\end{center}
\end{figure}
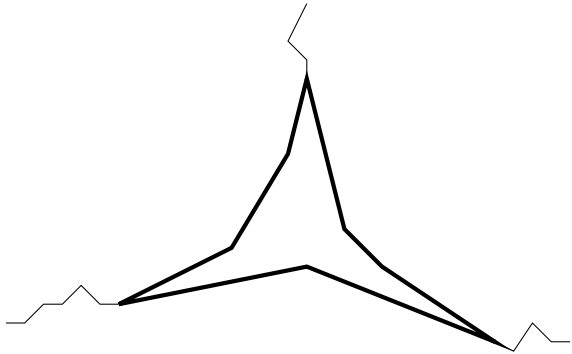

\begin{lemma} \label{L:excising degenerate parts}  If there exists $\delta > 0$ so that every nondegenerate triangle $\Delta^{\hat \varsigma}(x,y,z)$ is $\delta$--slim, then the same is true for degenerate triangles of collapsed preferred paths.
\end{lemma}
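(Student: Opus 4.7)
The plan is to exploit the modular structure of preferred paths to reduce slimness of a degenerate triangle to that of its nondegenerate subtriangle. Given a degenerate triangle $\Delta(x,y,z)$ with nondegenerate subtriangle $\Delta' \subset \Delta$ having vertices $x', y', z' \in \Sigma_0$, the sides $[f(x), f(y)]$ and $[f(x), f(z)]$ share a common initial chain of saddle connections $\sigma_1 \sigma_2 \cdots \sigma_j$ terminating at $x'$, and analogous sharing occurs at $y'$ and $z'$. Because each piece of the preferred path construction depends only on the adjacent data---the lift $\gamma_i = f_{X_{\alpha_i}}(\sigma_i)$ in the fiber $E_{X_{\alpha_i}}$ and the horizontal connector $h_i$ in $D_{z_i}$ joining endpoints of these lifts---the initial portions $h_0 \gamma_1 h_1 \cdots \gamma_j$ of $\varsigma(x,y)$ and $\varsigma(x,z)$ are literally identical, terminating at a common point $\tilde x' \in \Sigma$ with $f(\tilde x') = x'$. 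I would then define $\tilde y', \tilde z' \in \Sigma$ analogously at the other two vertices.

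This yields concatenation decompositions
\[\varsigma(x,y) = \varsigma(x, \tilde x') \cdot \varsigma(\tilde x', \tilde y') \cdot \varsigma(\tilde y', y),\]
with parallel decompositions for $\varsigma(x,z)$ and $\varsigma(y,z)$, where each outer factor is shared verbatim with one of the other two sides of the triangle. The three middle factors $\varsigma(\tilde x', \tilde y')$, $\varsigma(\tilde x', \tilde z')$, $\varsigma(\tilde y', \tilde z')$ together form a subtriangle $\Delta^\varsigma(\tilde x', \tilde y', \tilde z')$ whose associated reference triangle in $E_0$ is precisely the nondegenerate $\Delta'$. Hence, by hypothesis, its image $\Delta^{\hat\varsigma}(\tilde x', \tilde y', \tilde z')$ in $\hat E$ is $\delta$--slim.

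To conclude, I would take any point $p \in \hat \varsigma(x,y)$ and split into cases. If $p$ lies in the image of the shared tail $\varsigma(x, \tilde x')$ then $p$ lies on $\hat \varsigma(x,z)$ itself, and symmetrically if $p$ lies in the image of $\varsigma(\tilde y', y)$ then $p$ lies on $\hat \varsigma(z, y)$; in either case slimness is automatic. Otherwise $p$ lies on the middle $\hat \varsigma(\tilde x', \tilde y')$, and the assumed $\delta$--slimness of the nondegenerate subtriangle places $p$ within distance $\delta$ of $\hat \varsigma(\tilde x', \tilde z') \cup \hat \varsigma(\tilde y', \tilde z')$, which is contained in $\hat \varsigma(x,z) \cup \hat \varsigma(y,z)$ by the decompositions. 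Symmetry in the three sides then yields the desired inclusion $\hat \varsigma(x,y) \subset N_\delta(\hat \varsigma(x,z) \cup \hat \varsigma(y,z))$ and its two analogues, completing the argument. The main point requiring care is the ``modularity'' claim in the first paragraph---that geodesics sharing initial saddle connections in $E_0$ induce identical initial segments of the corresponding preferred paths---but this follows directly from unpacking the definition of $\varsigma$ since each $h_i$ and $\gamma_i$ is determined locally by $\sigma_i$ (and for $h_i$, also by $\sigma_{i+1}$).
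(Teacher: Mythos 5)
Your proposal is correct and takes essentially the same route as the paper: both identify the common initial segments of the preferred paths (determined by the saddle connections shared among the reference geodesics), observe that excising them leaves a nondegenerate triangle of preferred paths, and transfer the $\delta$--slimness of that subtriangle back to the original one. Your write-up is more explicit than the paper's---in particular you carefully verify that $\varsigma(x,y)$ concatenates as $\varsigma(x,\tilde x')\cdot\varsigma(\tilde x',\tilde y')\cdot\varsigma(\tilde y',y)$ by checking the horizontal pieces at the junctions---but the underlying argument is the same.
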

In particular, to prove Theorem~\ref{T:preferred slim triangles}, it suffices to prove that nondegenerate triangles of collapsed preferred paths are $\delta$--slim.
\begin{proof}  Assume all nondegenerate triangles of collapsed preferred paths are $\delta$--slim for some $\delta$, and let $x,y,z \in \Sigma$ be any points.  If $[f(x),f(y)] \cap [f(x),f(z)] = \sdl_1 \cdots \sdl_r$, where $\sdl_i$ is a saddle connection, then the preferred paths from $x$ to $y$ and from $x$ to $z$ agree on the first $r$ horizontal pieces and $r$ saddle pieces.  Likewise for the segments starting at $y$ and $z$.  Points along the common paths are within distance $0$ of another side.  After excising these common paths, we are left with a nondegenerate triangle of preferred paths, which is $\delta$--slim by assumption.  In particular, the distance from any point on one side of the original triangle either has distance $0$ or distance at most $\delta$ from some point on one of the other sides, thus the original (arbitrary) triangle is $\delta$--slim.
\end{proof}

\subsection{Decomposing triangles}
To prove Theorem~\ref{T:preferred slim triangles},  we can restrict our attention to points $x,y,z$ for which the reference triangle $\Delta(x,y,z)$ in the base fiber $E_0$ is nondegenerate, by Lemma~\ref{L:excising degenerate parts}.  Our analysis of the collapsed preferred paths of nondegenerate triangles is then carried out in steps by analyzing increasingly complicated reference triangles. 
We now prove two lemmas which will be useful for decomposing more complicated triangles in the base fiber into simpler pieces.  

\begin{lemma} \label{L:no cones inside} Given cone points $x,y,z \in \Sigma_0$, if $\Delta(x,y,z)$ is nondegenerate, then it bounds a topological $2$--simplex (also denoted $\Delta(x,y,z)$) which is convex and has no cone points in its interior.
\end{lemma}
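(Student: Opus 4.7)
\emph{Plan.} The approach is a flat Gauss--Bonnet argument on the CAT(0) universal cover $E_0$, combined with the Jordan curve theorem. First, since $S$ has genus at least $2$, $E_0$ is homeomorphic to $\mathbb{R}^2$. Nondegeneracy of $\Delta(x,y,z)$ means its three geodesic sides meet pairwise only at their common endpoints, so $\Delta$ is a Jordan curve and by the Jordan curve theorem bounds a topological disk $D$; this supplies the topological $2$--simplex of the statement.

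Next I would apply the piecewise-flat Gauss--Bonnet formula to $D$. Every cone point of $E_0$ arises from a zero of the holomorphic quadratic differential $q$ of order at least $1$, and hence has cone angle $\theta_p \geq 3\pi$ (contributing $(\text{order}+2)\pi$) and carries concentrated curvature $2\pi - \theta_p \leq -\pi$. Along $\partial D$, any cone point $p'$ lying in the interior of an edge must have interior angle $\beta \geq \pi$ by the geodesic condition. Writing $\alpha_1,\alpha_2,\alpha_3$ for the interior angles at the vertices $x,y,z$ and $\beta_j$ for the interior angles at the boundary cone points, Gauss--Bonnet yields
\[
\alpha_1 + \alpha_2 + \alpha_3 + \sum_j(\beta_j - \pi) + \sum_{p \in D^\circ}(\theta_p - 2\pi) = \pi.
\]
Every summand beyond the first three is non-negative, and each interior cone point contributes at least $\pi$. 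Nondegeneracy forces $\alpha_i > 0$ for each $i$, so even a single interior cone point would force $\alpha_1+\alpha_2+\alpha_3 \leq 0$, a contradiction. Hence $D^\circ$ contains no cone points.

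For convexity, with $D^\circ$ now flat, the same identity forces $\alpha_i < \pi$ at each vertex. Along each edge in the complement of the cone points the boundary is a flat Euclidean geodesic, so $D$ locally sits in a half-plane; at each vertex the wedge angle $\alpha_i$ is strictly less than $\pi$. Since $E_0$ is CAT(0), such local convexity properties upgrade to global geodesic convexity of the closed connected subspace $D$.

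The main obstacle will be the careful bookkeeping in the Gauss--Bonnet formula and, in particular, the treatment of boundary cone points (which are unavoidably present, as each edge is a concatenation of saddle connections meeting at cone points). The geodesic constraint $\beta_j \geq \pi$ together with the identity above constrains these boundary angles tightly enough to complete the local convexity verification at those points and thereby establish convexity of $D$.
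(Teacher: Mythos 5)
Your Gauss--Bonnet argument for the absence of interior cone points is correct and is essentially the paper's argument: the paper doubles $\Delta$ to a sphere and applies Gauss--Bonnet there, whereas you apply it directly to the disk, but the bookkeeping ($\theta_p - 2\pi \geq \pi$ at each interior cone point, $\beta_j - \pi \geq 0$ at each boundary cone point, $\alpha_i > 0$ at the vertices) and the resulting contradiction are the same. Either variant is fine.

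The convexity portion takes a genuinely different route from the paper, and as sketched it has a gap. You suggest that the Gauss--Bonnet identity ``constrains the boundary angles tightly enough'' to verify local convexity at boundary cone points, but it does not: the identity only gives $\sum_j(\beta_j - \pi) < \pi$, which allows individual $\beta_j$ to be strictly larger than $\pi$, and indeed they typically are. The correct reason local convexity holds at such a point $p'$ has nothing to do with Gauss--Bonnet: since the side of $\Delta$ is a geodesic passing through $p'$, it makes angle $\geq \pi$ on \emph{both} sides, so the exterior angle $\theta_{p'} - \beta_j$ is $\geq \pi$, and it is the exterior angle bound (not an interior angle bound) that forces geodesics between nearby interior points to stay in $D$. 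A similar remark applies at the vertices: ``$\alpha_i < \pi$ hence $D$ locally sits in a half-plane'' is not literally what happens, since $x,y,z$ are cone points; what you need is again that the exterior angle $\theta_{x} - \alpha_1 \geq 3\pi - \pi > \pi$. Finally, the passage from local convexity to global convexity of a closed connected subset of a CAT(0) space is a real theorem (not a triviality) and must be cited or proved. The paper sidesteps all of this by prolonging each side to a complete geodesic line, using the Jordan curve theorem in $E_0\cong\mathbb{R}^2$ to split $E_0$ into two convex half-spaces along each line, and observing that $\Delta(x,y,z)$ bounds the intersection of three such half-spaces; this is shorter and requires no local-to-global principle.
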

\begin{proof} To see that $\Delta(x,y,z)$ bounds a convex topological 2-simplex, we notice that we can obtain the simplex by prolonging the sides to geodesic lines, and intersecting half-spaces bounded by these lines.

Let $N\ge 0$ denote the number of cone points in the interior of $\Delta = \Delta(x,y,z)$, each of which has cone angle at least $3\pi$. The double $\Delta'$ of $\Delta$ along its boundary is a topological sphere. If $a,b,c>0$ are the interior angles of the vertices of $\Delta(x,y,z)$, then $\Delta'$ has cone angles $2a,2b,2c$ at these points. Every other point of $\Delta'$ has cone angle at least $2\pi$; this is because all interior points of $\Delta(x,y,z)$ have angle at least $2\pi$ and all non-vertex points on the boundary of $\Delta$ have interior angle at least $\pi$ since the sides are geodesics. The curvature at a point is $2\pi$ minus the cone angle at that point, and each of the $2N$ cone points in $\Delta'$ coming from the $N$ cone points in the interior of $\Delta$ have curvature $\leq -\pi$.  The Gauss--Bonnet theorem implies the total curvature of $\Delta'$ (the sum of the curvatures over all the cone points) is $2\pi \chi(\Delta') = 4\pi$, and therefore
 \[4 \pi \le \! (2\pi - 2a) \! + \! (2\pi - 2b) \! + \! (2\pi - 2c) - 2N\pi  <  6\pi - 2N\pi.\]
 Thus $2N\pi < 2\pi$, which is only possible if $N = 0$.
\end{proof}

The next lemma will be our main tool for decomposing triangles in the base fiber.

\begin{lemma}\label{L:triangle decomposition}
Let $x,y,z \in \Sigma_0 \subset E_0$ be cone points defining a nondegenerate triangle $\Delta(x,y,z)$.  Let $\gamma$ be the flat geodesic from $x$ to a cone point $w$ in the interior of the opposite side $[y,z]$ in $\Delta(x,y,z)$.  Then $\gamma$ is the union of a proper subsegment $\nu$ of either $[x,y]$ or $[x,z]$ and a single saddle $\mu$ in the interior of $\Delta(x,y,z)$ ending at $w$.

\end{lemma}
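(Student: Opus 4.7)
The plan is to exploit the CAT(0) structure of $E_0$ together with the previous Lemma~\ref{L:no cones inside} and a careful angle-sector analysis at each cone point through which $\gamma$ passes. By Lemma~\ref{L:no cones inside}, the triangle $\Delta=\Delta(x,y,z)$ is convex and its interior contains no cone points. Thus $\gamma\subset\Delta$ and any interior cone point of $\gamma$ (i.e., a point where $\gamma$ bends) lies on $\partial\Delta$. A Gauss--Bonnet computation for the flat polygon $\Delta$ (where cone points on edges contribute their interior-side excess as ``corner angles'' on the interior side) shows that $\phi_x+\phi_y+\phi_z\le \pi$; in particular each vertex angle is strictly less than $\pi$, which precludes $\gamma$ from bending at $y$ or $z$.

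Next I would decompose $\gamma=\sigma_1\cdots\sigma_k$ into its saddle connections and analyze the structure saddle by saddle. The key rigidity comes from CAT(0) uniqueness of geodesics in $E_0$: if a saddle connection has both endpoints on $[x,y]$ (respectively $[x,z]$), it must coincide with the corresponding subsegment of that side. Combined with the vertex-angle bound above, this forces the first saddle $\sigma_1$ to either lie along $[x,y]$, lie along $[x,z]$, or be a chord through the interior of $\Delta$ ending on the interior of $[y,z]$. Iterating the same argument at each successive bending point shows that $\gamma$ consists of a (possibly degenerate) initial subsegment $\nu$ along one of $[x,y]$ or $[x,z]$, terminating at some cone point $p$, after which the next saddle $\mu$ must be a chord into the interior of $\Delta$ with its other endpoint $q$ on the interior of $[y,z]$.

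The heart of the proof is then to show $q=w$, so that $\gamma=\nu\cup\mu$ with $\mu$ a single chord to $w$. Suppose instead $q\neq w$; then $\gamma$ continues past $q$ with another saddle $\sigma$. Splitting $\Delta$ along the chord $\mu$ yields two sub-triangles whose interior angles at $q$ sum to $\pi+a_q$, where $a_q$ is the interior-side cone excess at $q$. Since each sub-triangle is nondegenerate, each of these two angles is strictly less than $\pi$. The ray from $q$ back along $\mu$ toward $p$ together with the forward ray of $\gamma$ (either along $[y,z]$ toward $w$ or into the interior as another chord) divides the cone at $q$ into two sectors, and in each case one of those sectors coincides with (or lies inside) one of the sub-$\pi$ sub-triangle angles at $q$. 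This violates the geodesic condition requiring both sectors to have angle at least $\pi$, giving the desired contradiction. The main obstacle is this last step: one must carefully identify which sub-triangle-angle bounds the relevant sector in each sub-case (forward along $[y,z]$ on the $y$-side, forward along $[y,z]$ on the $z$-side, or forward into the interior via another chord), and confirm via the Gauss--Bonnet angle accounting above that the geodesic angle condition at $q$ indeed fails in all cases.
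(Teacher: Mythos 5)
Your proposal takes a genuinely different route from the paper. The paper's proof is a short CAT(0) argument: $\gamma$ lies in $\Delta$ by convexity; once it leaves a side $[x,y]$ or $[x,z]$ it cannot touch either of those sides again (two geodesics to the same point), it cannot run along $[y,z]$ (extend past $w$ to the far vertex to get two geodesics to that vertex), and once in the interior it is a single saddle connection by Lemma~\ref{L:no cones inside}. In particular, if the chord reached a cone point $q\neq w$ on $[y,z]$, any continuation would either run along $[y,z]$ (ruled out above) or be another chord with both endpoints on $[y,z]$ (ruled out by CAT(0) uniqueness), so the chord must end at $w$. Your angle/Gauss--Bonnet argument is an alternative way to close the same final gap, and your step precluding $\gamma$ from passing through $y$ or $z$ is a nice local substitute for one of the extension arguments.

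However, the ``heart of the proof'' as written contains an error. When $\nu$ is nondegenerate, splitting $\Delta$ along $\mu=[p,q]$ with $p\in (x,y)$ and $q\in(y,z)$ does \emph{not} yield two sub-triangles: one piece is the triangle $\Delta(p,y,q)$, but the other is the quadrilateral with corners $x,p,q,z$. For the quadrilateral the doubled Gauss--Bonnet computation only gives that the four corner angles sum to at most $2\pi$, so the argument ``each sub-triangle is nondegenerate, so its angle at $q$ is $<\pi$'' simply does not apply to that piece. The conclusion $\phi_2<\pi$ is in fact true, but you need an additional input you did not supply: the geodesic condition for $\gamma$ at the intermediate cone point $p$ forces the quadrilateral's corner angle at $p$ to be at least $\pi$, and only then does the $\le 2\pi$ bound yield $a_x+a_q+a_z\le\pi$, hence $a_q<\pi$. (Equivalently, one can replace the split along $\mu$ with the two triangles $\Delta(x,y,q)$ and $\Delta(x,q,z)$, using the geodesic $[x,q]=\nu\mu$ as a side, noting that the first is degenerate with nondegenerate core $\Delta(p,y,q)$ while the second is nondegenerate, and then applying Lemma~\ref{L:no cones inside} to each.) As stated, the step is a genuine gap; either of the above fixes repairs it, but at that point the paper's shorter CAT(0) argument is arguably cleaner.
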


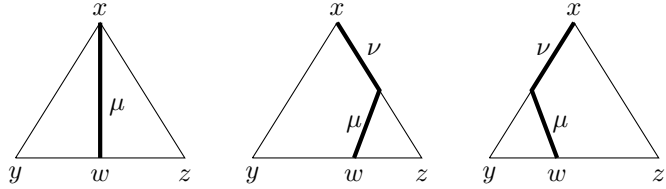
\begin{figure}[h]
\begin{center}
\begin{tikzpicture}[scale = .9]
\draw(0,0) -- (2.5,0) -- (1.25,2) -- (0,0);
\draw(3.5,0) -- (6,0) -- (4.75,2) -- (3.5,0);
\draw(7,0) -- (9.5,0) -- (8.25,2) -- (7,0);
\node at (0,-.25) {$y$};
\node at (1.25,2.2) {$x$};
\node at (2.5,-.25) {$z$};
\node at (1.25,-.25) {$w$};
\node at (3.5,-.25) {$y$};
\node at (4.75,2.2) {$x$};
\node at (6,-.25) {$z$};
\node at (5,-.25) {$w$};
\node at (7,-.25) {$y$};
\node at (8.25,2.2) {$x$};
\node at (9.5,-.25) {$z$};
\node at (8,-.25) {$w$};
\draw[ultra thick] (1.25,0) -- (1.25,2);
\draw[ultra thick] (4.75,2) -- (5.375,1) -- (5,0);
\draw[ultra thick] (8.25,2) -- (7.625,1) -- (8,0);
\node at (1.5,.75) {$\mu$};
\node at (5,.5) {$\mu$};
\node at (8.05,.5) {$\mu$};
\node at (5.3,1.6) {$\nu$};
\node at (7.8,1.6) {$\nu$};
\end{tikzpicture}
\caption{Possible configurations of geodesic segments from $x$ to a cone point $w$ in the interior of $[y,z]$ (shown as thickened lines).  The portion in the interior of $\Delta(x,y,z)$ is a single saddle connection.  In the left-most picture $\nu$ is just the point $x$.}
\label{F:opposite sides}
\end{center}
\end{figure}

\begin{proof}
The geodesic $\gamma$ must lie in $\Delta(x,y,z)$ by convexity (Lemma \ref{L:no cones inside}). Starting from $x$, if $\gamma$ does not immediately enter the interior of $\Delta(x,y,z)$, then it must run along one of the edges for some time, then leave that edge. Note that once it leaves the edge it cannot return to it as this would produce two geodesic segments between a pair of points, contradicting the CAT(0) condition.  The segment cannot follow that edge all the way to one of the other vertices, for then it can be continued beyond $w$ to the third vertex, giving two distinct geodesics between $x$ and the third vertex, a contradiction again (because $\Delta(x,y,z)$ is nondegenerate).  It must therefore leave the edge it is following before reaching the vertex.  By a similar reasoning, the segment cannot run along the side opposite $x$ for any of its length, for we could then again continue to another vertex of the triangle and produce two distinct geodesics between a pair of points, a contradiction once again.  Finally, the fact that the part in the interior of $\Delta(x,y,z)$ is a single saddle connection comes from the fact that there are no cone points in the interior by Lemma \ref{L:no cones inside}.
\end{proof}

\subsection{Euclidean triangles} \label{S:Euclidean triangles}
We now begin by analyzing the simplest type of non-degenerate triangles. Given $x,y,z \in \Sigma_0$, the triangle $\Delta(x,y,z)$ is called {\em Euclidean}, if it is nondegenerate, and each side consists of a single saddle connection.

\begin{lemma} \label{L:triangle balance pt}
For any Euclidean triangle $\Delta(x,y,z)$, there exists a unique point $X \in D$ such that $f_X(\Delta(x,y,z))$ is an equilateral triangle.
\end{lemma}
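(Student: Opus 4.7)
The plan is to parameterize $D\cong \mathbb{H}^2$ by positive-definite symmetric matrices of determinant one in $\SL_2(\mathbb R)$ and translate the equilateral condition into linear algebra. For $X\in D$ corresponding to such a matrix $M$, the Teichm\"uller map $f_X\colon E_0\to E_X$ is affine with linear part $M$ in preferred coordinates for $q$ on the source and $q_X$ on the target. Since each side $\sigma_i$ of the Euclidean triangle $\Delta(x,y,z)$ is a single saddle connection, it is represented by a vector $v_i\in\mathbb R^2$ in preferred coordinates for $q$, and these vectors satisfy $v_1+v_2+v_3=0$. Consequently, the $q_X$-side-lengths of $f_X(\Delta(x,y,z))$ are $|Mv_1|,|Mv_2|,|Mv_3|$, and the equilateral condition becomes $|Mv_1|=|Mv_2|=|Mv_3|$ for a unique positive-definite symmetric $M$ with $\det M=1$.

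For existence, I will first produce some $A\in \SL_2(\mathbb R)$ (not necessarily symmetric) realizing the equilateral condition, and then pass to its symmetric part. Fixing a reference equilateral triangle in the plane with side vectors $e_1+e_2+e_3=0$, the nondegeneracy of $\Delta$ implies $v_1,v_2$ are linearly independent, so there is a unique $\mathbb R$-linear map $L$ with $Lv_1=e_1$ and $Lv_2=e_2$; then $Lv_3=e_3$ holds automatically from the linear relations. Choosing the reference orientation appropriately ensures $\det L>0$, and rescaling yields $A\in\SL_2(\mathbb R)$ with $|Av_1|=|Av_2|=|Av_3|$. Polar-decomposing $A=RM$ with $R\in\SO(2)$ and $M$ positive-definite symmetric gives $|Mv_i|=|RMv_i|=|Av_i|$ for each $i$, so this $M$ corresponds to the desired point $X\in D$.

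For uniqueness, suppose positive-definite symmetric $M_0,M_1\in\SL_2(\mathbb R)$ both yield equilateral images. Then $M_0\Delta$ and $M_1\Delta$ are equilateral triangles of equal area (both equal to $\mathrm{area}(\Delta)$ since the $M_i$ are area-preserving), hence congruent. Matching the vertex labelings, the linear map $B:=M_1M_0^{-1}$ is an orientation-preserving Euclidean isometry fixing the origin, i.e., $B\in\SO(2)$. Thus $M_1=B\cdot M_0$ and $M_1=I\cdot M_1$ are both polar decompositions of $M_1\in\SL_2(\mathbb R)$; by uniqueness of polar decomposition, $B=I$ and $M_0=M_1$, so $X$ is unique. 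The main subtlety is the careful use of the symmetric-space parametrization of $D$, which is what enables the reduction of uniqueness to the uniqueness of polar decomposition and thus pins down the point $X\in D$ (rather than merely a coset of $\SO(2)$); with this in place, the rest of the argument is essentially linear algebra.
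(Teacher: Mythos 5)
Your proof is correct and follows essentially the same strategy as the paper: develop the triangle into the plane via preferred coordinates, find a determinant-one linear map sending it to an equilateral triangle, and deduce uniqueness from the observation that any two such deformations differ by an orthogonal transformation. Your polar-decomposition argument in the symmetric-space model of $D$ just makes explicit the linear algebra behind the paper's terse claim that $f_{Y,X}$ is conformal and hence $X=Y$; the one point worth flagging is that the relation $v_1+v_2+v_3=0$ tacitly invokes Lemma~\ref{L:no cones inside}, which is what guarantees the $2$--simplex develops isometrically into $\mathbb{C}$ in the first place.
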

We call the point $X$ in this lemma the {\em balance point of $\Delta(x,y,z)$}.
\begin{proof} Recall that the points of $D$ parameterize {\em all} affine deformations of the flat metric $q$.  Since $\Delta(x,y,z)$ bounds a $2$--simplex with no cone points in its interior (Lemma~\ref{L:no cones inside}), we may develop the entire $2$--simplex into $\mathbb C$ via preferred coordinates for $q$.  The image is a triangle in $\mathbb C$, and we choose any $A \in \SL_2(\mathbb R)$ so that the associated real linear transformation sends the triangle to an equilateral triangle.  Setting $(X,q_X) = A \cdot (X_0,q)$, we have $f_X(\Delta(x,y,z))$ is equilateral since the preferred coordinates for $A \cdot q$ differ from those for $q$ by composing with $A$; see \S\ref{S:Teich disks defined}.  If $Y \in D$ and $f_Y(\Delta(x,y,z))$ is equilateral, then the affine map $f_{Y,X}$ maps the equilateral triangle $f_X(\Delta(x,y,z))$ to the equilateral triangle $f_Y(\Delta(x,y,z))$.  Therefore $f_{X,Y}$ is conformal, and hence $X = Y$.
\end{proof}

The following lemma is an immediate consequence of \cite[Theorem~6.8]{Vorobets} which states that the Veech group of $q$ is a lattice if and only if there are only finitely many areas of Euclidean triangles for $q$ (in fact, we are only using the easy direction of this theorem).  We will use this lemma repeatedly to bound the distance between certain pairs of horoballs in the proof of Lemma~\ref{L:slim_fans} below.

\begin{lemma} \label{L:bounded diameter Euclidean}  There exists $A> 0$ so that the area of any Euclidean triangle $\Delta(x,y,z)$ is at most $A$.  Consequently, if $X$ is the balance point of $\Delta(x,y,z)$, then each side of $f_X(\Delta(x,y,z))$ has length at most $2 \sqrt{A}$. In particular, the balance point lies uniformly close to all 3 horoballs corresponding to the directions of the sides of $\Delta(x,y,z)$. \qed
\end{lemma}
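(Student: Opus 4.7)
The statement splits into three claims: the area bound by some $A>0$; the resulting side-length bound of $2\sqrt A$ on the equilateral $f_X(\Delta(x,y,z))$; and uniform proximity of the balance point $X$ to the three horoballs. The area bound is the substantive step, while the other two follow quickly from the preceding setup.

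For the area bound, $\mathring\Delta(x,y,z)$ is a convex, simply connected open set disjoint from $\Sigma_0$ by Lemma~\ref{L:no cones inside}, so it develops isometrically into $\mathbb C$ as a Euclidean triangle of the same area. Since $S = E_0/\pi_1 S$ is compact, there is a constant $r>0$ with every flat $r$-ball in $E_0$ meeting $\Sigma_0$, which bounds the inradius of $\Delta(x,y,z)$. The single-saddle-connection hypothesis on the three sides forces $\partial \Delta(x,y,z)\cap \Sigma_0 = \{x,y,z\}$, so $\Delta$ is empty of cone points apart from its vertices. My plan is to leverage this to prove that $\pi|_{\mathring\Delta}\colon \mathring\Delta \to S$ is injective, which would yield $\mathrm{area}(\Delta)\le \mathrm{area}(S,q)$. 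By convexity, any failure of injectivity produces distinct $p,q\in \mathring\Delta$ with $[p,q]\subset \mathring\Delta$ projecting to a closed flat geodesic loop in $S\setminus \Sigma$; the Veech dichotomy (Theorem~\ref{T:Veech_Dichotomy}) confines this loop to the core of a cylinder in some parabolic direction $\alpha\in \CP$, and the rigid cylinder structure---whose boundary saddle connections carry cone points in direction $\alpha$---should then be incompatible with the presence of a parallel segment of core-length inside the cone-free convex region $\mathring\Delta$, producing either an interior cone point or a violation of the single-saddle-connection hypothesis on the side of $\Delta$ that happens to lie in direction $\alpha$.

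Granting the area bound, the second claim is a one-line calculation: by Lemma~\ref{L:triangle balance pt}, the map $f_X$ at the balance point is the restriction of an element of $\SL_2(\mathbb R)$ and therefore preserves area. Hence $f_X(\Delta(x,y,z))$ is equilateral of area at most $A$, so its side length is $\sqrt{4A/\sqrt 3}$, which is at most $2\sqrt A$ since $\sqrt 3>1$.

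For the third claim, each side $\sdl_i$ of $\Delta(x,y,z)$ is a saddle connection in a direction $\alpha_i\in \CP$, and its length at $q_X$ equals the side of the equilateral $f_X(\Delta)$, hence is at most $2\sqrt A$. By Proposition~\ref{P:horoball sublevel set} the horoball $B_{\alpha_i}\subset D$ is a sublevel set of this length function, and because $\{B_\alpha\}_{\alpha\in \CP}$ is $G$-invariant and $G$ is a lattice with only finitely many orbits on $\CP$, the defining cutoff lengths are uniformly bounded below. Consequently the hyperbolic distance from $X$ to each $B_{\alpha_i}$ is bounded by a constant depending only on $A$ and the chosen horoball family, yielding the claimed uniform proximity. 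The main obstacle throughout is the area bound: establishing injectivity of $\pi|_{\mathring\Delta}$, or otherwise extracting an area bound from the inradius estimate via a Pick-style argument using the discreteness and cocompactness of $\Sigma_0$, is the one step requiring genuine geometric work.
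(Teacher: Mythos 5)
The second and third claims are handled correctly, and the reduction to the area bound is right; the gap is in the area bound itself, which is the heart of the lemma. Your plan is to show $\pi|_{\mathring\Delta}$ is injective and conclude $\mathrm{area}(\Delta)\le \mathrm{area}(S,q)$, but the injectivity is asserted rather than proved, and the sketch has real problems. If $p\neq q\in\mathring\Delta$ with $\pi(p)=\pi(q)$, the segment $[p,q]$ projects to a geodesic \emph{segment} whose endpoints coincide, not to a closed geodesic: the two angles at $\pi(p)$ sum to $2\pi$ but need not each be $\geq\pi$. So Theorem~\ref{T:Veech_Dichotomy}, which concerns closed leaves of the foliations, does not apply to this loop. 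Even granting a closed geodesic core curve, your claim that it is ``incompatible'' with the triangle—that it forces ``an interior cone point or a violation of the single-saddle-connection hypothesis on the side of $\Delta$ that happens to lie in direction $\alpha$''—is an assertion, not an argument, and there is no reason any side of $\Delta$ should lie in direction $\alpha$. The fallback of a ``Pick-style argument'' is likewise only gestured at; Pick's theorem requires a lattice structure that $\Sigma_0$ does not obviously supply, since not every lattice point of the developed image is a cone point. In short, the area bound you want is not established, and it is not clear that the inequality $\mathrm{area}(\Delta)\le\mathrm{area}(S,q)$ is even the right thing to aim for.

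The paper's proof is structurally different and worth comparing against. It does not argue via injectivity onto $S$ at all. Instead it applies $f_X$ at the balance point $X$ (area-preserving), obtaining the equilateral triangle $\Delta_X$, and first shows $X$ cannot be deep in any horoball $B_\alpha$: if it were, $\Delta_X$ would meet the strip decomposition of $E_X$ in direction $\alpha$, and at some vertex both incident sides enter a single strip $W$ of width $w$; the $\pi/3$ angle forces the intersection with the far boundary of $W$ to have length $\geq 2w/\sqrt{3}$, yet by Lemma~\ref{L:no cones inside} this intersection lies in a single saddle connection, whose length is uniformly bounded because $X\in B_\alpha$ (Proposition~\ref{P:horoball sublevel set}, Lemma~\ref{L:strip-and-saddle-bound}). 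This bounds $w$, hence bounds the depth of $X$ in $B_\alpha$. With $X$ now in a uniformly thick part of $D$, the quotient $E_X/\pi_1S$ has uniformly bounded diameter $r$, and any cone-free region of $E_X$ has diameter at most $4r$ (else it contains an $r$-ball that would cover the whole quotient, which has cone points). This bounds the side length of $\Delta_X$, hence the area. So the paper's thickness-of-the-balance-point step is both the key to the area bound and the substantive content you omit; your third paragraph actually needs the area bound to run, so nothing in your outline substitutes for it. To repair your approach you would need either a genuine proof of the injectivity claim, or, more likely, to reproduce the strip-crossing argument.
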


\subsection{Fans} \label{S:fans}
The next simplest type of nondegenerate triangles are those built from Euclidean triangles in the pattern of a ``fan".  The analysis of the associated collapsed preferred paths of triangles in this case is the key technical result behind the proof of Theorem~\ref{T:preferred slim triangles}.  We now proceed to the precise description of these triangles and their analysis.

Given $x,y,z\in \Sigma$, the triangle $\Delta(x,y,z)$ in $E_0$ is called a {\em fan} if  it is nondegenerate and at least two sides consist of a single saddle connection.  By Lemma \ref{L:triangle decomposition}, each geodesic between a cone point in the interior of the third side to the vertex opposite that side is a single saddle connection.  It follows that fans decompose into unions of Euclidean triangles, all of which have a common vertex.  See Figure~\ref{F:fan}.  

We say that two saddle connections in a fiber $E_X$ \emph{span a triangle} if they share an endpoint and the flat geodesic joining their other endpoints is a single (possibly degenerate) saddle connection.  Given a saddle connection $\sdl$ in some fiber $E_X$, let us write $\mathcal{P}(\sdl)\subset \mathcal{P}$ for the set of directions of all saddle connections that span a triangle with $\sdl$.\label{ind:spanning directions}  Notice that the direction of $\sdl$ is contained in $\mathcal P(\sdl)$ since $\sdl$ spans a (degenerate) triangle with itself. Denote
\[B(\sdl) = \bigcup_{\alpha\in \mathcal{P}(\sdl)} B_{\alpha}\]
for the union of horoballs at these directions. Similarly, taking horoball preimages, we denote $\mathcal{B}(\sdl) =\bigcup_{\alpha\in \mathcal{P}(\sdl)} \mathcal B_{\alpha}$.\label{ind:horo spans} 

The utility of this notion comes from the following corollary of Lemma \ref{L:bounded diameter Euclidean}.

\begin{corollary}\label{cor:balance_point_near_horob}
 There exists $A'>0$ so that for each saddle connection $\sdl$ in the direction $\alpha$ the following holds. For any $\beta\in \mathcal P(\sdl)$ we have $d(B_\alpha,B_\beta)<A'$ in $D$.
\end{corollary}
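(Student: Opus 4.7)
The plan is to read the statement off from Lemma~\ref{L:bounded diameter Euclidean}, whose final sentence already asserts the required uniform proximity of the balance point to the three horoballs labeled by the side directions of a Euclidean triangle. All that is left is to convert the combinatorial hypothesis $\beta \in \mathcal P(\sigma)$ into an honest Euclidean triangle to which that lemma applies.

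First I would dispose of the degenerate possibility. Since $\sigma$ spans a (degenerate) triangle with itself, $\alpha \in \mathcal P(\sigma)$, and in that case $\beta = \alpha$ gives $d(B_\alpha, B_\beta) = 0$ trivially. So assume $\beta \neq \alpha$.

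Next, unpacking the definition of $\mathcal P(\sigma)$ produces a saddle connection $\sigma'$ in direction $\beta$ sharing an endpoint $p$ with $\sigma$, together with a third saddle connection $\sigma''$ joining the remaining two endpoints; by Theorem~\ref{T:Veech_Dichotomy}, the direction $\gamma$ of $\sigma''$ lies in $\CP$. Because $\alpha \neq \beta$ forces $\sigma$ and $\sigma'$ not to be colinear at $p$, the three endpoints are distinct and non-colinear, so $\sigma\cup\sigma'\cup\sigma''$ is a nondegenerate triangle all of whose sides are single saddle connections---i.e., a Euclidean triangle in the sense of the paper (after identifying the fiber containing these saddle connections with $E_0$ via the canonical affine identification, which preserves both the saddle connection structure and the directions).

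Finally, Lemma~\ref{L:bounded diameter Euclidean} supplies a balance point $X \in D$ of this Euclidean triangle satisfying $d(X,B_\alpha), d(X,B_\beta), d(X,B_\gamma) \le A''$ for some uniform constant $A''$ (independent of the triangle and hence of $\sigma$ and $\beta$). The triangle inequality then yields $d(B_\alpha,B_\beta) \le d(B_\alpha,X) + d(X,B_\beta) \le 2A''$, so setting $A' = 2A''$ completes the argument. There is no real obstacle: all the geometric content is already inside Lemma~\ref{L:bounded diameter Euclidean}, and the proof is essentially just its restatement in the present notation.
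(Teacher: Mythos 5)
Your proof is correct and follows the same route as the paper, which presents this statement as an immediate corollary of Lemma~\ref{L:bounded diameter Euclidean} without spelling out the details. You have merely made explicit the (routine) unpacking of the definition of $\mathcal P(\sdl)$ into a genuine Euclidean triangle and the triangle inequality at the end; the one slightly loose phrase (``non-colinear'') is harmless since nondegeneracy really follows from uniqueness of $\mathrm{CAT}(0)$ geodesics and the fact that each side is a single saddle connection with no cone points in its interior.
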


We now show that fans give rise to slim triangles of collapsed preferred paths, and we moreover give a criterion, in terms of the notions that we just defined, for the triangle to be coarsely degenerate.

\begin{lemma}[Fan lemma]
\label{L:slim_fans}
There exists a constant $\delta' > 0$ such that if the geodesic triangle $\Delta(x,y,z)$ in $E_0$ is a fan, where $x,y,z\in \Sigma$, then the triangle $\Delta^{\hat\varsigma}(x,y,z)$ of collapsed preferred paths is $\delta'$--slim. If, furthermore, $[f(x),f(y)]=\sdl_z$ and $[f(y),f(z)]=\sdl_x$ are each a single saddle connection and $y$ lies on a geodesic in $D_y$ with respective endpoints in $\mathcal B(\sdl_z)$ and $\mathcal B(\sdl_x)$, then the collapsed preferred paths satisfy
\[\hat\varsigma(x,y), \hat\varsigma(y,z) \subset N_{\delta'}\left(\hat\varsigma(x,z)\right)
\qquad\text{and}\qquad
\hat\varsigma(x,z)\subset N_{\delta'}(\hat\varsigma(x,y)\cup \hat\varsigma(y,z)).\]
\end{lemma}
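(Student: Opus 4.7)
The plan is to decompose the fan into its constituent Euclidean sub-triangles and build a bounded-distance ``grid'' in $\hat E$ connecting the natural vertices appearing along the three sides of $\Delta^{\hat\varsigma}(x,y,z)$. After relabelling, assume $\sigma_z=[f(x),f(y)]=\mu_0$ and $\sigma_x=[f(y),f(z)]=\mu_k$ are single saddle connections with directions $\beta_0,\beta_k$, and decompose $[f(x),f(z)]=\sigma_1\cdots\sigma_k$ with $\sigma_i$ of direction $\alpha_i$. Lemma \ref{L:triangle decomposition} provides single-saddle ``spokes'' $\mu_i$ (direction $\beta_i$) from $f(y)$ to each intermediate cone point $w_i$, so the fan decomposes as Euclidean sub-triangles $T_i=\Delta(f(y),w_{i-1},w_i)$ with sides in directions $\beta_{i-1},\alpha_i,\beta_i$. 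Let $v_i\in T_{\alpha_i}$ be the vertex of $\hat\varsigma(x,z)$ obtained by collapsing the saddle piece in direction $\alpha_i$, and let $u_j\in T_{\beta_j}$ be the spoke vertex coming from $\mu_j$; in particular $u_0$ is the sole vertex of $\hat\varsigma(x,y)$ and $u_k$ is the sole vertex of $\hat\varsigma(y,z)$.

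Corollary \ref{cor:balance_point_near_horob} applied to the balance point of each $T_i$ produces a uniform bound $M_0$ on the pairwise $D$--distances between $B_{\alpha_i}, B_{\beta_{i-1}}$, and $B_{\beta_i}$. Since the cone point $w_{i-1}$ lies on both $\sigma_i$ and $\mu_{i-1}$, the Teichm\"uller disk $\hat D_{w_{i-1}}$ (isometric to $\hat D$ with horoballs collapsed to points) contains both $v_i$ and $u_{i-1}$, and horoball proximity yields $\hat d(v_i,u_{i-1})\le M_0$. Symmetrically $\hat d(v_i,u_i)\le M_0$ in $\hat D_{w_i}$, while all $u_j$ sit in $\hat D_y$ (since $f(y)$ is a common endpoint of every spoke), giving $\hat d(u_{i-1},u_i)\le M_0$ there. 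Moreover each intermediate horizontal piece of $\hat\varsigma(x,z)$ has length bounded by $M_0$. This establishes the grid: the vertices $v_i$ along $\hat\varsigma(x,z)$ stay uniformly close to the chain $u_0,\dots,u_k$ in $\hat D_y$.

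Slimness then follows by combining hyperbolicity of the Teichm\"uller disks with the grid. The initial horizontal piece of $\hat\varsigma(x,z)$ in $\hat D_x$ fellow travels the corresponding piece of $\hat\varsigma(x,y)$ since they share the endpoint $x$ and their other endpoints $v_1,u_0$ lie within $M_0$ in the hyperbolic disk $\hat D_x$ (similarly for the terminal pieces in $\hat D_z$). For interior points of $\hat\varsigma(x,z)$, each vertex $v_i$ lies within $M_0$ of the spoke vertex $u_i\in\hat D_y$, so it suffices to show $u_i$ is uniformly close to the bent path $[u_0,y]\cup[y,u_k]$ contained in $\hat\varsigma(x,y)\cup\hat\varsigma(y,z)$. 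Here we use that the spokes $\mu_0,\dots,\mu_k$ emanate from $f(y)$ in an angular order inherited from the CAT(0) fan (Lemma \ref{L:no cones inside}), which forces the boundary directions $\beta_0,\dots,\beta_k$ of $D$ to appear consecutively; combined with the step bounds on the chain, this shows $u_0,\dots,u_k$ traces a quasi-geodesic in the hyperbolic disk $\hat D_y$. Hyperbolicity of $\hat D_y$ places each $u_i$ in a bounded neighborhood of the geodesic $[u_0,u_k]$, and slim triangles place $[u_0,u_k]$ in a bounded neighborhood of $[u_0,y]\cup[y,u_k]$.

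The main obstacle is controlling $u_0,\ldots,u_k$ as a quasi-geodesic in $\hat D_y$; this requires careful use of the angular ordering of horoball directions inherited from the flat-geometric fan. For the second part, the extra hypothesis places $y$ itself on a $D_y$-geodesic from $\mathcal B(\sigma_z)$ to $\mathcal B(\sigma_x)$: since $\mathcal P(\sigma_z)$ contains $\alpha_1,\beta_1$ (as $\sigma_1$ and $\mu_1$ each span a triangle with $\sigma_z=\mu_0$) and analogously for $\sigma_x$, this $D_y$-geodesic passes near the horoballs at both ends of the chain $u_0,\dots,u_k$ and forces $y$ to lie in a bounded neighborhood of the entire chain in $\hat D_y$. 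The bent path $[u_0,y]\cup[y,u_k]$ therefore coarsely coincides with the chain, and the grid from above yields both containments $\hat\varsigma(x,y),\hat\varsigma(y,z)\subset N_{\delta'}(\hat\varsigma(x,z))$ and $\hat\varsigma(x,z)\subset N_{\delta'}(\hat\varsigma(x,y)\cup\hat\varsigma(y,z))$.
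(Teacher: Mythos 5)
Your decomposition of the fan into Euclidean sub-triangles via Lemma \ref{L:triangle decomposition}, your use of Corollary \ref{cor:balance_point_near_horob} to locate balance points near all three horoballs of each sub-triangle, and your plan to reduce slimness to hyperbolicity of $\hat D_y$ via a "grid" of horoball visits are all aligned with the paper's strategy (Claims \ref{claim:truncated_fan} and \ref{claim:fan_hoiz_balancpts}). However, there is a genuine gap at the one step where the real work happens: you assert that the chain $u_0,\dots,u_k$ of spoke vertices traces a quasi-geodesic in $\hat D_y$, justifying this only by noting that the spoke directions $\beta_0,\dots,\beta_k$ appear in consecutive angular order around $f(y)$ and that the chain has bounded steps $\hat d(u_{j-1},u_j)\le M_0$. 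Neither of these properties, nor both together, forces a quasi-geodesic: a chain with bounded steps whose associated boundary points are consecutively ordered on $\partial \mathbb H^2$ can still have total length vastly exceeding $\hat d(u_0,u_k)$ if, say, the directions $\beta_j$ all cluster in a small arc near the boundary. Some genuine geometric input is needed to rule this out, and that input is quantitative, not just orderedness.

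The paper supplies exactly this missing input in Claim \ref{claim:fan_hoiz_balancpts}, which constructs points $t_1,\dots,t_k$ \emph{in order along a single $D$--geodesic} $h$ from $b_1$ to $b_k$ that lie uniformly close to the horoballs $B_{\alpha_1},\dots,B_{\alpha_k}$. The proof is an explicit monotonicity argument on the sub-triangle angles $\lambda_i(X)$ and $\rho_i(X)$: the inequalities $\lambda_i(X)<\lambda_{i+1}(X)$ and $\rho_i(X)>\rho_{i+1}(X)$ (forced by the fan being a geodesic), combined with the intermediate value theorem applied as $X$ moves along $h$, produce nested intervals $[l_i,r_i]\subset h$ in which $\sdl_i$ becomes short; the area bound from Lemma \ref{L:bounded diameter Euclidean} then shows each $[l_i,r_i]$ passes near $B_{\alpha_i}$. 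This is the "miracle" the authors refer to, and your proposal does not contain a substitute for it. You would need to either reproduce this angle-monotonicity argument (adapted to your spoke directions $\beta_j$, using the fact that the balance point of $T_i$ is near $B_{\beta_{i-1}}$, $B_{\beta_i}$, and $B_{\alpha_i}$, and that $b_i$ and $b_{i+1}$ share the horoball $B_{\beta_i}$), or else supply a different argument that the chain stays close to a geodesic of $\hat D_y$. Without it, the step "hyperbolicity of $\hat D_y$ places each $u_i$ in a bounded neighborhood of $[u_0,u_k]$" has nothing to apply to.

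Your treatment of the "furthermore" clause is broadly right in outline, and the observation that $\CP(\sdl_z)$ contains the directions $\alpha_1,\beta_1$ is a nice shortcut, but its correctness also depends on the grid already being established, so the same gap propagates.
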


We will make frequent (sometimes implicit) use of the following fact about the geometry of the hyperbolic plane; see e.g. \cite[Lemma 4.5]{GM08}. 
\begin{lemma} \label{L:geodesics between horoballs} If points $v,w\in D$ respectively lie within bounded distance of horoballs $B_\alpha,B_\beta$, for $\alpha,\beta\in \mathcal{P}$, then the geodesic joining $v$ to $w$ lies within a uniform neighborhood of $B_\alpha\cup B_\beta$ and the shortest geodesic joining these horoballs. \qed
\end{lemma}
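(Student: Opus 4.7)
The plan is to prove this standard fact about two-dimensional hyperbolic geometry by combining the convexity of horoballs with the thin-quadrilateral property of $\mathbb{H}^2$. Since $D$ with the Poincar\'e metric $\rho$ is isometric to $\mathbb{H}^2$ and is in particular $\delta'$--hyperbolic (one can take $\delta' = 2$), every geodesic quadrilateral in $D$ has the property that each side lies in a $2\delta'$--neighborhood of the union of the other three.

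First, I would reduce to the case where the points lie exactly on the horoball boundaries. Let $C \geq 0$ be the assumed bound, so $v^{*} := c_{\alpha}(v) \in \partial B_{\alpha}$ and $w^{*} := c_{\beta}(w) \in \partial B_{\beta}$ satisfy $\rho(v, v^{*}), \rho(w, w^{*}) \leq C$. Applying thinness to the geodesic quadrilateral with vertices $v, v^{*}, w^{*}, w$, the geodesic $[v,w]$ lies in a $(2\delta' + C)$--neighborhood of $[v^{*}, w^{*}] \cup B_{\alpha} \cup B_{\beta}$. Thus it is enough to control $[v^{*}, w^{*}]$.

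Next let $\sigma_{0}$ be the shortest geodesic joining $B_{\alpha}$ to $B_{\beta}$, with endpoints $p \in \partial B_{\alpha}$ and $q \in \partial B_{\beta}$ (this geodesic exists and is unique since the horoballs are disjoint closed convex subsets of $\mathbb{H}^2$ with distinct points at infinity). Consider the geodesic quadrilateral with corners $v^{*}, p, q, w^{*}$ and sides $[v^{*}, p]$, $\sigma_{0} = [p,q]$, $[q, w^{*}]$, and $[v^{*}, w^{*}]$. Thinness of quadrilaterals gives
\[ [v^{*}, w^{*}] \subseteq N_{2\delta'}\bigl( [v^{*}, p] \cup \sigma_{0} \cup [q, w^{*}] \bigr). \]
Since $B_{\alpha}$ is a closed convex subset of $\mathbb{H}^2$ containing both $v^{*}$ and $p$, the entire segment $[v^{*}, p]$ lies in $B_{\alpha}$; similarly $[q, w^{*}] \subseteq B_{\beta}$. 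Therefore
\[ [v^{*}, w^{*}] \subseteq N_{2\delta'}\bigl( B_{\alpha} \cup \sigma_{0} \cup B_{\beta} \bigr). \]

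Combining the two steps, the geodesic $[v,w]$ lies in the $(4\delta' + C)$--neighborhood of $B_{\alpha} \cup \sigma_{0} \cup B_{\beta}$, with the constant depending only on $C$ and the hyperbolicity constant of $\mathbb{H}^{2}$. The main (very mild) subtlety is simply setting up the right quadrilateral so that convexity does all the work for the ``horoball parts''; everything else is a direct application of the thin-quadrilateral property. No features of the Veech setup beyond $D \cong \mathbb{H}^2$ and convexity of the $B_{\alpha}$ enter the argument.
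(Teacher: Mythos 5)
Your argument is correct. The paper itself offers no proof of this lemma --- it states the fact with a \texttt{\textbackslash qed} immediately following and a citation to \cite[Lemma 4.5]{GM08} --- so there is no internal proof to compare against, but your two-quadrilateral argument (first reducing to $v^*, w^*$ on the horoball boundaries, then comparing $[v^*, w^*]$ with the quadrilateral through the feet $p, q$ of the common perpendicular and using convexity of horoballs to absorb the sides $[v^*,p]$ and $[q,w^*]$) is a clean and standard way to establish it, and every step checks out. The only thing worth flagging is the degenerate case $\alpha = \beta$, where the ``shortest geodesic joining the horoballs'' is not defined; there the statement is trivial by convexity of $B_\alpha$ and thin quadrilaterals, and you may as well note it separately so the uniqueness claim for $\sigma_0$ has content.
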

Hence, if $h_1, h_2$ are horizontal geodesics in some Teichm\"uller disc $D_x$ whose endpoints lie within bounded distance of the same horoball preimages $\mathcal{B}_\alpha$ and $\mathcal{B}_\beta$, then $P(h_1)$ and $P(h_2)$ have bounded Hausdorff distance in $\hat E$.

\begin{proof}[Proof of Lemma \ref{L:slim_fans}] In what follows, we will often lift objects (points, paths, etc) from $D$ to a horizontal disk $D_w$, through a point $w \in E$.  When we can do so without confusion, we will use the same name for the object in $D$ and in $D_w$, to avoid introducing even more notation than is already necessary.

As depicted in Figure~\ref{F:fan}, we can express the three sides of $\Delta(x,y,z)\subset E_0$ as concatenations of saddle connections:
\[[f(x),f(y)] = \sdl_z, \quad [f(y),f(z)] = \sdl_x, \quad\text{and}\quad [f(x),f(z)] = \sdl_1\sdl_2\dotsb\sdl_k.\]
For each index $\ast\in\{z,x,1,\dotsc,k\}$, write $\alpha_*\in\mathcal{P}$ for the direction of the saddle connection $\sdl_*$ and set $\gamma_* = f_{X_{\alpha_*}}(\sdl_*)$. We may then write
\[\varsigma(x,y) = h_z \gamma_z h'_z \quad\text{and}\quad 
\varsigma(y,z) = h'_x \gamma_x h_x,\]
where $h_z$ is the horizontal geodesic from $x$ to $X_{\alpha_z}$ in $D_x$, $h'_z$ the geodesic from $X_{\alpha_z}$ to $y$ in $D_y$, $h'_x$ the geodesic from $y$ to $X_{\alpha_x}$ in $D_y$, and $h_x$ the geodesic from $X_{\alpha_x}$ to $z$ in $D_z$. We also have
\[\varsigma(x,z) = h_0 \gamma_1 h_1 \gamma_2 h_2 \dotsb \gamma_k h_k\]
where $h_0$ is is the geodesic from $x$ to $X_{\alpha_1}$ in $D_x$, $h_k$ the geodesic from $X_{\alpha_k}$ to $z$ in $D_z$, and $h_i$ the geodesic from $X_{\alpha_i}$ to $X_{\alpha_{i+1}}$ in $D_{\gamma_i^+} = D_{\gamma^-_{i+1}}$ for each $1\le i < k$. 

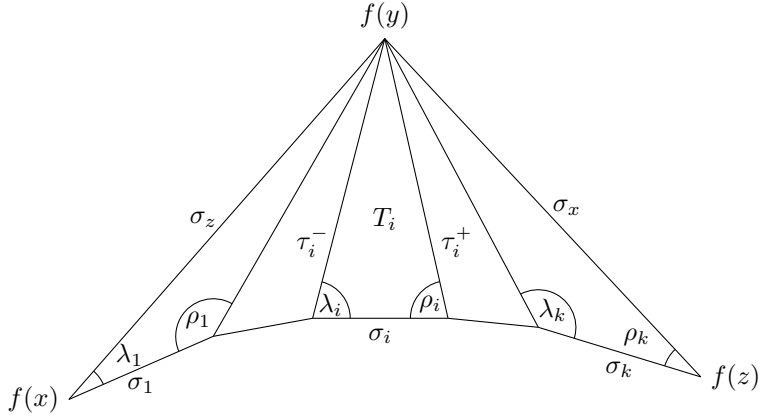
\begin{figure} 
\begin{center}
\begin{tikzpicture}[scale = 1.2]
\usetikzlibrary{calc,patterns,angles,quotes}
\coordinate (y) at (0,3);
\coordinate (x) at (-3.5,-1);
\coordinate (cone) at (-1.9,-.3);
\coordinate (ctwo) at (-.8,-.1);
\coordinate (cthree) at (.7, -.1);
\coordinate (cfour) at (1.7,-.2);
\coordinate (z) at (3.5,-.75);
\coordinate (T_i) at (0, 1);
\coordinate (tau-) at (-.8, .75);
\coordinate (tau+) at (.8, .75);
\node[above] at (y) {$f(y)$};
\node[left] at (x) {$f(x)$};
\node[right] at (z) {$f(z)$};
\node at (T_i) {$T_i$};
\node at (tau-) {$\tau_i^-$};
\node at (tau+) {$\tau_i^+$};
\draw (y) -- node[left] {$\sdl_z$} (x);
\draw (y) -- node[right] {$\sdl_x$} (z);
\draw (y) -- (cone);
\draw (y) -- (ctwo);
\draw (y) -- (cthree);
\draw (y) -- (cfour);
\draw (x) --node[below] {$\sdl_1$}  (cone) --  (ctwo) --node[below] {$\sdl_i$}  (cthree) --  (cfour) --node[below]{$\sdl_k$} (z);

\pic [draw, "$\lambda_1$", angle eccentricity=2] {angle = cone--x--y};
\pic [draw, "$\rho_1$"] {angle = y--cone--x};
\pic [draw, "$\lambda_i$"] {angle = cthree--ctwo--y};
\pic [draw, "$\rho_i$"] {angle = y--cthree--ctwo};
\pic [draw, "$\lambda_k$"] {angle = z--cfour--y};
\pic [draw, "$\rho_{k}$", angle eccentricity=2] {angle = y--z--cfour};

\end{tikzpicture}
\caption{A fan $\Delta(x,y,z) \subset E_0$, in which $[f(x),f(y)]$ and $[f(y),f(z)]$ are both a single saddle connection.}
\label{F:fan}
\end{center}

\end{figure}

Let $h'_y$ be the horizontal geodesic from $X_{\alpha_z}$ to $X_{\alpha_x}$ in $D_y$. The three paths $h'_z$, $h'_y$, $h'_x$ form a geodesic triangle in the $2$--hyperbolic space $D_y$; thus we have
\[
h'_z \subset N_{2}(h'_x\cup h'_y),\quad
h'_x \subset N_{2}(h'_z\cup h'_y),\quad\text{and}\quad
h'_y \subset N_{2}(h'_z\cup h'_x).
\]
Since $P\colon E\to \hat{E}$ is $1$--Lipschitz; the collapsed horizontal segments $P(h'_z)$, $P(h'_x)$, and $P(h'_y)$ also form a $2$--slim triangle. Therefore, the first conclusion of the lemma follows from the following claim, which we prove below.

\begin{claim}
\label{claim:truncated_fan}
The collapsed paths $P(h_z \gamma_z h'_y \gamma_x h_x)$ and $P(h_0\gamma_1 h_1 \gamma_2 h_2 \dotsc \gamma_k h_k) = P(\varsigma(x,z))$ have uniformly bounded Hausdorff distance.
\end{claim}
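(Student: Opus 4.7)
My plan is to show that both paths in the claim have uniformly bounded $\hat d$--length in $\hat E$, so that the uniform Hausdorff bound follows immediately from the fact that they share the endpoints $P(x)$ and $P(z)$. Decompose the fan into Euclidean sub-triangles $T_1, \ldots, T_k$, where $T_i = \Delta(f(y), \tau_{i-1}, \tau_i)$ has sides $\mu_{i-1}, \sdl_i, \mu_i$ of directions $\beta_{i-1}, \alpha_i, \beta_i \in \CP$ (adopting $\tau_0 = f(x)$, $\tau_k = f(z)$, $\beta_0 = \alpha_z$, $\beta_k = \alpha_x$, and writing $\mu_i$ for the saddle connection from $f(y)$ to $\tau_i$). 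By Lemma~\ref{L:triangle balance pt}, each $T_i$ has a balance point $X_i \in D$, and by Corollary~\ref{cor:balance_point_near_horob} there is a universal $A' > 0$ with $X_i$ within $D$--distance $A'$ of each of $B_{\alpha_i}$, $B_{\beta_{i-1}}$, $B_{\beta_i}$. In particular, $d(B_{\beta_{i-1}}, B_{\beta_i}) \le 2A'$, and the chain argument through $B_{\beta_i}$ gives $d(B_{\alpha_i}, B_{\alpha_{i+1}}) \le 4A'$.

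The critical step is establishing a uniform upper bound $k \le K$ depending only on the Veech data. The directions $\beta_0, \ldots, \beta_k$ sit monotonically on an arc of $\partial D_y \cong \mathbb{P}^1(q)$ whose total angular length equals the interior angle of $\Delta(x,y,z)$ at $f(y)$; this angle is bounded above by a universal $\Theta > 0$ determined by the maximal cone angle of the flat surface $(S, X_0, q)$. The $1$--separation of the $G$--invariant horoball family in $D_y$, combined with the $D$--distance bound $d(B_{\beta_{i-1}}, B_{\beta_i}) \le 2A'$, forces a uniform positive lower bound $\epsilon > 0$ on every consecutive angular distance $|\beta_i - \beta_{i-1}|$ on $\partial D_y$; this uses the cocompactness of $G \curvearrowright \bar D$ and the finiteness of the cusps of $\bar D/G$ to control the range of horoball sizes that can appear at the bounded distance scale $2A'$. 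Together these give $k \le \Theta/\epsilon =: K$ uniformly across all fans.

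With $k \le K$, each horizontal piece $P(h_i)$ of the preferred path lies in the isometrically embedded horizontal disk $\hat D_{z_i} \cong \hat D$ and connects $v_i = \bar P(\gamma_i)$ to $v_{i+1} = \bar P(\gamma_{i+1})$. Since $d(B_{\alpha_i}, B_{\alpha_{i+1}}) \le 4A'$, the corresponding collapsed points are at bounded $\hat\rho$--distance in $\hat D$, and the standard relatively hyperbolic projection principle ensures the image of the $\rho$--geodesic $h_i$ is a uniform quasi-geodesic in $\hat D_{z_i}$; hence $P(h_i)$ has uniformly bounded $\hat d$--length. Summing over $k+1 \le K+1$ such pieces bounds the total length of $P(\varsigma(x,z))$ by a universal $L > 0$. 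The same argument applied to $P(h_z)$, $P(h_x)$, and the geodesic $P(h'_y) \subset \hat D_y$ (noting that the chain of $u_i$'s with consecutive $\hat d$--distances bounded via lifts of the $X_i$ ensures $\hat\rho$--distance from $u_0$ to $u_k$ is bounded once $k \le K$) shows that $P(h_z \gamma_z h'_y \gamma_x h_x)$ likewise has $\hat d$--length at most $L$. Since both paths have length at most $L$ and share the endpoints $P(x)$ and $P(z)$, every point on one lies within distance $L$ of a shared endpoint and hence of the other path. The main technical obstacle is the uniform bound $k \le K$: converting $1$--separation and bounded inter-horoball $D$--distance into a uniform lower bound on angular separation on $\partial D_y$ is a delicate hyperbolic-geometric consequence of the lattice hypothesis on $G$.
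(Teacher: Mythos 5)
Your proposal hinges on establishing a uniform upper bound $k \leq K$ on the number of saddle connections in $[f(x),f(z)]$, and then deducing that both paths have uniformly bounded $\hat d$--length. Unfortunately, this uniform bound on $k$ is false, and the entire approach collapses as a result. Fans can have arbitrarily long bases: for instance, taking $f(x)$ a fixed neighbor of $f(y)$ and $f(z) = \tau^n(f(z_0))$ for a multitwist $\tau$ fixing $f(y)$ and $f(z_0)$ an adjacent cone point, one gets fans where $[f(x),f(y)]$ and $[f(y),f(z)]$ remain single saddle connections while the number of saddle connections in $[f(x),f(z)]$ grows without bound. Moreover, if $k$ were bounded, then the $\hat d$--distance between $P(x)$ and $P(z)$ would be uniformly bounded over all fans, which is plainly not the case. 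Because $k$ is unbounded, showing both paths have bounded length is impossible, and one must instead show the two (potentially very long) paths track each other.

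The specific step where the argument goes wrong is the claim that the $1$--separation of the horoball family combined with $d(B_{\beta_{i-1}},B_{\beta_i}) \leq 2A'$ forces a uniform lower bound on the angular distance between $\beta_{i-1}$ and $\beta_i$. This does not follow: two horoballs in $\mathbb{H}^2$ at bounded distance from one another can subtend an arbitrarily small angle at a fixed basepoint when they lie deep in a cusp region (e.g., the Ford horoballs of $\PSL_2(\mathbb{Z})$ at $1/n$ and $1/(n+1)$ are tangent for all $n$, yet the angular separation from $i$ tends to $0$). The lattice hypothesis and cocompactness of $G \curvearrowright \bar D$ control the sizes of the horoballs but not their angular density as seen from a fixed point. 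The correct approach, as in the paper, is not a length bound but an ordering statement: one shows (Claim~\ref{claim:fan_hoiz_balancpts}) that along the geodesic $[b_1,b_k]$ between the balance points of the first and last Euclidean triangles, there are points $t_1,\dots,t_k$ \emph{in order} that lie uniformly close to $B_{\alpha_1},\dots,B_{\alpha_k}$ respectively. This monotonicity, proved via the monotone angle sequences $\lambda_i$ and $\rho_i$ and the area bound from Lemma~\ref{L:bounded diameter Euclidean}, is what lets one match up segments of the two paths and obtain the Hausdorff bound without any control on $k$.
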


Now we argue that the claim also implies the `furthermore' conclusion of the lemma.
Assume that $y$ lies on a geodesic $g$ in $D_y$ with endpoints in $\mathcal B(\sdl_z)$ and $\mathcal B(\sdl_x)$. This means that there are saddle connections $\sdl'_z$ and $\sdl'_x$ spanning triangles $T_z$ and $T_x$ with $\sdl_z$ and $\sdl_x$, respectively, such that the endpoints of $g$ lie over the horoballs $B'_z$ and $B'_x$ for the directions of $\sdl'_z$ and $\sdl'_x$ (meaning that $\pi$ of the endpoints lie on the prescribed horoballs).
By Corollary \ref{cor:balance_point_near_horob}, the endpoints of $h'_y$ lie over horoballs close to $B'_z$ and $B'_x$, respectively. In particular, $P(g)$ and $P(h'_y)$ lie within uniformly bounded Hausdorff distance (Lemma~\ref{L:geodesics between horoballs}). 

We now see that the sets
\[P(h'_z)\cup P(h'_x)\quad\text{and}\quad P(h'_y)\]
have uniformly bounded Hausdorff distance, since $h'_z$ is a geodesic sharing an endpoint with $h'_y$, while the $P$--image of the other endpoint $y$ is close to $P(h'_y)$, and similarly for $h'_x$. Therefore, the `furthermore' conclusion of the lemma follows from Claim~\ref{claim:truncated_fan} as well.    This concludes the proof of Lemma~\ref{L:slim_fans}, assuming Claim~\ref{claim:truncated_fan}.
\end{proof}

Before we prove Claim~\ref{claim:truncated_fan}, we need a little more notation and another claim.   We let $T_i\subset E_0$ be the Euclidean triangle determined by $f(y)$ and the saddle connection $\sdl_i$, that is, with vertices $f(y)$, $\sdl_i^-$, and $\sdl_i^+$. We call $\sdl_i$ the base of $T_i$ and $\tau_i^\pm = [\sdl_i^\pm,f(y)]$ the sides of $T_i$. Let $\lambda_i$ and $\rho_i$ denote the angles of $T_i$ between $\sdl_i$ and the sides $\tau_i^-$ and $\tau_i^+$, respectively; see Figure~\ref{F:fan}. For any $X\in D$, we write $T_i(X)$, $\sdl_i(X)$, $\lambda_i(X)$, etc., for the corresponding pieces of the image fan $\Delta_X = f_X(\Delta(x,y,z))$ in the fiber $E_X$. Let $b_i\in D$ denote the balance point for the Euclidean triangle $T_i$, and $h$ the geodesic in $D$ from $b_1$ to $b_k$. 

The next claim is the key to proving Claim~\ref{claim:truncated_fan} (and is perhaps the biggest miracle in the proof of hyperbolicity of $\hat E$).  Roughly, it says that as we traverse $h$ from $b_1$ to $b_k$ in $D$, each of the saddle connections $\sdl_1,\ldots,\sdl_k$ become bounded in length when viewed in the fibers over $h$, and that the times when they are bounded occur in order. 

\begin{claim}
\label{claim:fan_hoiz_balancpts}
There are points $t_1,\dotsc, t_k$ appearing in order along the geodesic $h$ that respectively lie within uniformly bounded distance of the horoballs $B_{\alpha_1}, \dotsc B_{\alpha_k}$.
\end{claim}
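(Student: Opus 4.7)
For $i = 0, \dots, k$, let $\beta_i \in \mathcal{P}$ denote the direction of the saddle connection from $f(y)$ to the $i$-th cone point along the base $[f(x), f(z)]$, with the convention that $i=0$ corresponds to $f(x)$ and $i=k$ to $f(z)$, so $\beta_0$ and $\beta_k$ are the directions of $\sdl_z$ and $\sdl_x$. Then for $0<i<k$ the saddle connection $\tau_i^+ = \tau_{i+1}^-$ has direction $\beta_i$, and the Euclidean triangle $T_i$ has its three sides in directions $\alpha_i, \beta_{i-1}, \beta_i$. By Corollary~\ref{cor:balance_point_near_horob}, the balance point $b_i$ therefore lies within uniformly bounded distance $A'$ of each of the three horoballs $B_{\alpha_i}$, $B_{\beta_{i-1}}$, and $B_{\beta_i}$.

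The key structural input is that the $\beta_i$ are monotonically ordered on $\partial D \cong \mathbb{P}^1(q)$. Indeed, the Gauss--Bonnet computation in the proof of Lemma~\ref{L:no cones inside} shows that the interior angles at the three vertices of the fan sum to at most $\pi$, so in particular the interior angle at $f(y)$ is strictly less than $\pi$. By convexity of the fan (Lemma~\ref{L:no cones inside}), the cone points along the base are visible from $f(y)$ in strict monotone angular order. Combined with the angle bound, this shows that the $\beta_i$ lie in strict monotone order along a proper arc of $\partial D$.

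Now consider the piecewise geodesic path $\mathcal{P} = [b_1,b_2] \cup \cdots \cup [b_{k-1},b_k]$ in $D$. Each segment $[b_i,b_{i+1}]$ has both endpoints in the convex set $N_{A'}(B_{\beta_i})$ and so lies entirely inside it, hence $\mathcal{P}$ passes through uniform neighborhoods of $B_{\beta_0},\dots,B_{\beta_k}$ in strict monotone order. Because the $\beta_i$ are monotonically ordered on $\partial D$ and the horoballs are $1$-separated, the relative hyperbolicity of $(D,\{B_\alpha\})$---specifically the bounded horoball penetration property together with hyperbolicity of the coned-off quotient $\hat D$---forces the true geodesic $h = [b_1,b_k]$ to meet uniform neighborhoods of each $B_{\beta_i}$ in the same monotone order. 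Thus there exist points $s_0,\dots,s_k$ appearing in order along $h$ and a uniform constant $C$ with $s_i \in N_C(B_{\beta_i})$.

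To finish, for each $i = 1,\dots,k$ the subsegment $[s_{i-1},s_i] \subset h$ contains points within distance $C$ of both $B_{\beta_{i-1}}$ and $B_{\beta_i}$. Since these $1$-separated convex horoballs are disjoint, the set of points within any fixed distance of both has uniformly bounded diameter; as $b_i$ also lies in this common neighborhood, we may choose $t_i \in [s_{i-1},s_i]$ within uniformly bounded distance of $b_i$. Then $t_i$ is within uniformly bounded distance of $B_{\alpha_i}$, and the $t_i$ appear in order on $h$ by construction.

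\textbf{Main obstacle.} The delicate step is the tracking argument in the third paragraph: showing that the hyperbolic geodesic $h$ actually follows the horoball chain $B_{\beta_0},\dots,B_{\beta_k}$ in order. In general a geodesic between two points close to a monotone chain of horoballs need not visit each horoball---it may ``arc high'' over the intermediate ones. What salvages the argument is the combination of (i) strict monotonicity of the $\beta_i$ along $\partial D$, and (ii) each $b_i$ being anchored close to \emph{two} consecutive horoballs rather than just one, which pins down the trajectory of $h$. Making this fully rigorous requires invoking the relatively hyperbolic geometry of $(D,\{B_\alpha\})$ with care.
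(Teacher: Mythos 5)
Your argument is genuinely different from the paper's: you try to reduce the claim to a tracking statement for the $D$--geodesic $h=[b_1,b_k]$ along a monotone chain of horoballs, whereas the paper works directly inside the fibers, proving monotonicity of the angles $\lambda_i(X),\rho_i(X)$ along $h$ and then applying the intermediate value theorem together with the area bound (Lemma~\ref{L:bounded diameter Euclidean}) to locate, for each $i$, a point of $h$ where $\sdl_i$ is short.

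Your first two paragraphs are essentially sound. The observation that each balance point $b_i$ lies within bounded distance of all three horoballs $B_{\alpha_i},B_{\beta_{i-1}},B_{\beta_i}$ is correct, though the right reference is Lemma~\ref{L:bounded diameter Euclidean} (which bounds the side lengths of the equilateral representative) rather than Corollary~\ref{cor:balance_point_near_horob} (which only says the horoballs are mutually close). The monotonicity of the $\beta_i$ on $\partial D\cong\mathbb P^1(q)$ also holds, using that the interior angle of the fan at $f(y)$ is less than $\pi$.

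The third paragraph is where the argument breaks. The piecewise geodesic $\mathcal P=[b_1,b_2]\cup\cdots\cup[b_{k-1},b_k]$ is neither a geodesic in $D$ nor, a priori, a quasi-geodesic in $D$ or in $\hat D$: each segment $[b_i,b_{i+1}]$ may dive deeply into $B_{\beta_i}$ and backtrack, and consecutive $b_i$'s are only controlled up to the bounded coarse intersection $N_{A'}(B_{\beta_{i-1}})\cap N_{A'}(B_{\beta_i})$, which tells you nothing about the depth of the detour. Bounded horoball penetration compares a $D$--geodesic to a geodesic (or quasi-geodesic) of $\hat D$ with the same endpoints; it does not let you transfer the horoball contacts of an arbitrary path such as $\mathcal P$ to the geodesic $h$. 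Nor does monotonicity of the $\beta_i$ on $\partial D$ alone prevent $h$ from arcing well above the intermediate horoballs when $k$ is large, since the constraints you have impose only lower bounds on the penetration depths, not matching upper bounds on the gaps. In short, the conclusion ``there exist $s_0,\dots,s_k$ appearing in order along $h$ with $s_i\in N_C(B_{\beta_i})$'' is not forced by the data you have marshalled; proving it would require quantitative control on how the angles of the triangles $T_i$ evolve along $h$, which is precisely the content of the paper's intermediate-value argument. Once one has that control, the paper extracts the $t_i$ directly without needing the intermediate tracking of the $B_{\beta_i}$ at all.

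Your last paragraph would be fine if the $s_i$ existed; the extraction of $t_i\in[s_{i-1},s_i]$ via Lemma~\ref{L:geodesics between horoballs} and the bounded coarse intersection of $1$--separated horoballs is correct. So the single missing ingredient is the monotone tracking, and that ingredient is the heart of the claim.
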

\begin{proof}[Proof of Claim~\ref{claim:fan_hoiz_balancpts}] Denote lengths of saddle connections by $\mathrm{len}(\cdot)$. 
We find points $t_i$ as in the statement with $\mathrm{len}(\sdl_i(t_i))$ uniformly  bounded for each $i=1,\dotsc, k$. By Lemma~\ref{L:bounded diameter Euclidean}, we may take $t_1 = b_1$ and $t_k = b_k$. 

First observe that for any $X\in D$ we have
\[\lambda_1(X) < \lambda_2(X) < \dotsc < \lambda_k(X)
\quad\text{and}\quad
\rho_1(X) > \rho_2(X) > \dotsb > \rho_k(X).\]
Indeed, $\lambda_i(X) + \rho_i(X) < \pi$ by the fact that these are two angles of the Euclidean triangle $T_i(X)$, whereas $\rho_i(X) + \lambda_{i+1}(X) \ge \pi$ by the fact that $\sdl_1(X)\dotsb\sdl_k(X)$ is a geodesic in $E_X$. Thus $\lambda_i(X) < \lambda_{i+1}(X)$, proving the claimed inequalities for the $\lambda_i$; the ones for the $\rho_i$ can be proven similarly.

Since $T_1(b_1)$ and $T_k(b_k)$ are equilateral, for each $1  < i < k$ we thus have 
\[\rho_i(b_1) < \rho_1(b_1) = \frac{\pi}{3} = \lambda_k(b_k) > \lambda_i(b_k).\]
On the other hand, since $\sdl_1(X)\dotsb\sdl_k(X)$ forms a geodesic in any fiber $E_X$, we also have that $\rho_{k-1}(b_k) + \lambda_k(b_k) \ge \pi$ and $\rho_1(b_1)+\lambda_2(b_1) \ge \pi$. Hence, again for each $1  < i < k$, we have
\[
\rho_i(b_k) \ge \rho_{k-1}(b_k) \ge \frac{2\pi}{3}
\le \lambda_2(b_1) \le \lambda_i(b_1).
\]
Using the intermediate value theorem, for each $1 < i < k$ we  define $l_i,r_i\in D$ to be the first points along the geodesic $h$ such that $\lambda_i(l_i) = \pi/2$ and $\rho_i(r_i) = \pi/2$.

Let us use $\le$ to denote the natural order along $h$ (oriented from $b_1$ to $b_k$). Notice that $\lambda_i(r_i) < \pi/2$ (since $\rho_i(r_i) + \lambda_i(r_i) < \pi$). As we also have $\lambda_i(b_1) > \pi/2$, the definition of $l_i$ implies that $l_i \le r_i$. The fact $\rho_i(l_{i+1}) + \lambda_{i+1}(l_{i+1}) \ge \pi$ further implies $\rho_i(l_{i+1}) \ge \pi/2$. Since $\rho_i(b_1) < \pi/2$, the definition of $r_i$ gives $r_i \le l_{i+1}$. Thus our points along the geodesic $h=[b_1, b_k]$ are ordered as
\[b_1 = t_1 \le l_2 \le r_2 \le \dotsc \le l_{k-1} \le r_{k-1} \le t_k = b_k.\]

To complete the proof, it now suffices to show, for each $1 < i < k$, that there is a point $t_i\in [l_i,r_i]$ with $\sdl_i(t_i)$ of uniformly bounded length. Then by Lemma~\ref{L:bounded diameter Euclidean} we have
\begin{align*}
\mathrm{len}(\tau_i^-(l_i))\cdot\mathrm{len}(\sdl_i(l_i)) &=2\mathrm{area}(T_i(l_i)) \le 2A,\quad\text{and}\\
\mathrm{len}(\tau_i^+(r_i))\cdot\mathrm{len}(\sdl_i(r_i)) &= 2\mathrm{area}(T_i(r_i)) \le 2A.
\end{align*}
If $\mathrm{len}(\sdl_i(l_i)) \le 2\sqrt{A}$ or $\mathrm{len}(\sdl_i(r_i)) \le 2\sqrt{A}$ we are done. Otherwise  $\mathrm{len}(\tau_i^-(l_i))\le \sqrt{A}$ and $\mathrm{len}(\tau_i^+(r_i))\le \sqrt{A}$. Thus $l_i$ lies within bounded distance of the horoball for the direction $\tau_i^-$, and $r_i$ lies within bounded distance of the horoball for the direction $\tau_i^+$. Since the balance point $b_i$ for $T_i$ lies close to both of these horoballs by Lemma \ref{L:bounded diameter Euclidean}, the balance point lies near the shortest geodesic joining the horoballs.  It follows from Lemma \ref{L:geodesics between horoballs} that the geodesic $[l_i,r_i]$ passes near $b_i$, at which point $\sdl_i$ has bounded length, again by Lemma~\ref{L:bounded diameter Euclidean}.
\end{proof}

We now commence with the proof of Claim~\ref{claim:truncated_fan}.

\begin{proof}[Proof of Claim~\ref{claim:truncated_fan}]
Let $h_y$ denote the horizontal geodesic in $D_y$ from $b_1$ to $b_k$, that is, $h_y$ is the horizontal lift of $h$ to $D_y$. By Lemma~\ref{L:bounded diameter Euclidean}, $b_1$ and $b_k$ respectively lie within bounded distance of the horoballs $B_z$ and $B_x$. Since $h'_y$ runs between these horoballs as well, we see that $P(h'_y)$ and $P(h_y)$ have bounded Hausdorff distance.
The horizontal paths $h_z$ and $h_0$ in $D_x$ go from $x$ to the respective horoballs $B_{\alpha_z}$ and $B_{\alpha_1}$ which lie bounded distance from each other (they are near the balance point $b_1$ for $T_1$). Thus we similarly conclude that $P(h_z)$ and $P(h_0)$ have bounded Hausdorff distance. Symmetrically, the same holds for $P(h_x)$ and $P(h_k)$.

By the above remarks, and using the facts that $P$ is Lipschitz and the vertical paths $\gamma_z, \gamma_x, \gamma_1,\dotsc, \gamma_k$ each have uniformly bounded length once projected to $\hat E$ by $P$, it suffices to show that the sets
\[P(h_y)\quad\text{and}\quad P(h_1 \cup \dotsb \cup h_{k-1})\]
have bounded Hausdorff distance.

For $1 \le i < k$, let $g_i$ denote the horizontal geodesic in $D_y$ between the balance points $b_i$ and $b_{i+1}$ of the triangles $T_i$ and $T_{i+1}$. Similarly, let $h_i'$ be the horizontal geodesic between these points $b_i$ and $b_{i+1}$ in the disc $D_{\gamma_i^+} = D_{\gamma_{i+1}^-}$. Since the saddle connection $\tau_{i}^+ = \tau_{i+1}^-$ from $\gamma_i^+= \gamma_{i+1}^-$ to $f(y)$ is common to both triangles $T_i$ and $T_{i+1}$, Lemma~\ref{L:bounded diameter Euclidean} implies that $b_i$ and $b_{i+1}$ both lie within bounded distance of the horoball for the direction of $\tau_{i}^+$. Thus the saddle connection $\tau_i^+$ has bounded length over the whole geodesic $[b_i,b_{i+1}]$, and we conclude that the paths $g_i$ and $h_i'$ have bounded Hausdorff distance in $E$.

Using Claim~\ref{claim:fan_hoiz_balancpts}, let $s_i$ denote the lift of $t_i\in h$ to $h_y$. This gives a decomposition $h_y = h_y^1\dotsb h_y^{k-1}$ of $h_y$ into segments
\[h_y^i = [s_i, s_{i+1}],\quad\text{for $i = 1,\dotsc, k-1$}.\]
By construction of $t_i$, we see that $h_y^i$ begins and ends near the horoballs $B_{\alpha_i}$ and $B_{\alpha_{i+1}}$ in $D_{\gamma_i^+}$.  Since the balance points $b_i$ and $b_{i+1}$ lie near these horoballs as well, it follows that $P(h_y^i)$ and $P(g_i)$ are at bounded Hausdorff distance.  By the previous paragraph $P(g_i)$ and $P(h'_i)$ are at bounded Hausdorff distance, and hence so are $P(h_y^i)$ and $P(h'_i)$.  Therefore, $P(h_y)$ and $P(h_1'\cup\dotsb \cup h_{k-1}')$ have bounded Hausdorff distance.

To complete the proof of Claim~\ref{claim:truncated_fan}, it now suffices to show that 
\[P(h'_i)\quad\text{and}\quad P(h_i)\]
have uniformly bounded Hausdorff distance for each $1 \le   i < k$. But this is clear: The endpoints $b_i$ and $b_{i+1}$ of $h'_i$ lie uniformly close to the horoballs $B_{\alpha_i}$ and $B_{\alpha_{i+1}}$ containing the endpoints $X_{\alpha_i}$ and $X_{\alpha_{i+1}}$ of $h_i$. Since both paths $h'_i$ and $h_i$ lie in the same disc $D_{\gamma_i^+} = D_{\gamma_{i+1}^-}$, it follows that $P(h'_i)$ and $P(h_i)$ have uniformly bounded Hausdorff distance. This establishes Claim~\ref{claim:truncated_fan} and concludes the proof of Lemma~\ref{L:slim_fans}.
\end{proof}

\subsection{Decomposing triangles into fans}

We next consider the case that $\Delta(x,y,z)$ is a triangle where one side is a single saddle connection.  The main idea is that such triangles decompose into an alternating union of fans at certain ``pivot'' cone points along the sides with more than one saddle connection, as in Figure \ref{F:fans_and_pivots}.

\begin{lemma} \label{L:length one side slim} There exists $\delta''> 0$ so that if $x,y,z \in \Sigma$ and $\Delta(x,y,z)$ is a nondegenerate triangle so that one side is a saddle connection, then $\Delta^{\hat \varsigma}(x,y,z)$ is $\delta''$--slim.
\end{lemma}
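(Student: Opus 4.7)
The plan is to decompose $\Delta(x,y,z)$ into a chain of Euclidean fans using Lemma \ref{L:triangle decomposition}, then apply Lemma \ref{L:slim_fans} to each fan and chain the resulting slimness bounds. Without loss of generality, assume $[f(x),f(z)]=\sdl$ is the single-saddle side. Since $\sdl$ has no interior cone points, Lemma \ref{L:triangle decomposition} applied at the vertex $f(x)$ (and symmetrically at $f(z)$) forces every flat geodesic from $f(x)$ to an interior cone point on the opposite side $[f(y),f(z)]$ to consist of an initial segment of $[f(x),f(y)]$ followed by a single interior saddle connection, and similarly from $f(z)$. Drawing both families of geodesics simultaneously (and further subdividing any non-triangular sub-regions as needed, again via Lemma \ref{L:triangle decomposition}) produces a decomposition of $\Delta(x,y,z)$ into an alternating chain of Euclidean subtriangles $F_1,\ldots,F_N$. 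Each $F_n$ is a fan in the paper's sense (indeed all three sides are single saddle connections), the apex pivots alternate between cone points on $[f(x),f(y)]$ and $[f(y),f(z)]$, and consecutive fans $F_n, F_{n+1}$ share a single saddle connection as their common edge.

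Next, I would apply Lemma \ref{L:slim_fans}'s ``furthermore'' conclusion to each $F_n$. Since all three sides of $F_n$ are single saddle connections, the apex-side hypothesis of the ``furthermore'' clause is automatic; the remaining hypothesis requires the apex $y_n$ to lie on a horizontal geodesic in $D_{y_n}$ with endpoints in the horoball preimages $\CB(\sdl_{z_n})$ and $\CB(\sdl_{x_n})$ corresponding to the two apex-sides. To arrange this, one chooses the apex $y_n\in\Sigma$ as a lift of the relevant pivot into a suitable fiber: Corollary \ref{cor:balance_point_near_horob} applied to the Euclidean triangle $F_n$ places its balance point $X_n$ within uniformly bounded distance of both horoballs $B_{\mathrm{dir}(\sdl_{z_n})}$ and $B_{\mathrm{dir}(\sdl_{x_n})}$, so lifting the apex into $E_{X_n}$ situates $y_n$ on the short horizontal geodesic joining the corresponding horoball preimages. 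Because different fiber-lifts of the same projected cone point produce collapsed preferred paths within uniform Hausdorff distance (as in the argument of Lemma \ref{L:L(u,v) slim}), this choice does not affect the slimness constant. The ``furthermore'' conclusion then gives that the three collapsed preferred paths of $F_n$'s sides have pairwise Hausdorff distance at most $\delta'$.

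Finally, chain the bounds along the decomposition. Orienting the dual tree of the decomposition rooted at the fan containing $\sdl$, every saddle connection on $\partial\Delta\setminus\sdl$ is connected via some chain of fan-adjacencies back to $\sdl$; iterating the ``furthermore'' inclusion telescopes these into the desired two-sided neighborhood relation between $\hat\varsigma(x,y)\cup\hat\varsigma(y,z)$ and $\hat\varsigma(x,z)$. The main obstacle is ensuring the accumulated bound $\delta''$ is independent of the chain length $N$. Naive telescoping gives $N\delta'$, so uniformity requires that the interior saddle connections of the decomposition have directions whose horoballs stay uniformly close to $B_{\mathrm{dir}(\sdl)}$ in $D$, allowing Corollary \ref{cor:balance_point_near_horob} applied along the alternating chain to propagate proximity to $\hat\varsigma(\sdl)$ rather than accumulating errors. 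Establishing this $N$--independence via a careful horoball-proximity analysis propagated through the chain is the key technical point; with it in hand, the $\delta''$--slimness of $\Delta^{\hat\varsigma}(x,y,z)$ follows.
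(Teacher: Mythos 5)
Your proposal takes a genuinely different route from the paper, and there is a gap at precisely the point you flag as ``the key technical point.''

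The paper does not refine $\Delta(x,y,z)$ into Euclidean subtriangles. It decomposes it into a chain of \emph{non-Euclidean} fans, each based at a pivot vertex alternating between the two long sides, where the two apex-adjacent sides of each fan are single saddle connections but the opposite side is a subarc of a long side consisting of many saddle connections (see Figure~\ref{F:fans_and_pivots}). The crucial structural feature is that the pivot points $x_r$ and $x'_s$ are chosen to lie on the horizontal pieces of the preferred paths $\varsigma(x,z)$ and $\varsigma(x',z)$ themselves. This gives two telescoping decompositions $\varsigma(x,z)=\varsigma(x,x_0)\varsigma(x_0,x_1)\cdots$ and $\varsigma(x',z)=\varsigma(x',x'_1)\cdots$, and the ``furthermore'' conclusion of Lemma~\ref{L:slim_fans} then swaps each piece of one decomposition for the corresponding pair of pieces of the other \emph{once and in parallel}. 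The total error is therefore a fixed multiple of $\delta'$ (the paper lands on $2\delta'+2$), independent of the number $k$ of fans.

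Your scheme instead uses a finer decomposition into Euclidean triangles and then proposes to relate far-apart sides ``via some chain of fan-adjacencies back to $\sdl$,'' i.e.~to iterate the Fan Lemma sequentially along the dual tree. That is a fundamentally different telescoping pattern: the errors propagate in series, not in parallel, and you correctly observe that this naively gives $N\delta'$. Your proposed repair---that the directions of the interior saddle connections should have horoballs uniformly close to $B_{\mathrm{dir}(\sdl)}$---is not established and is almost certainly false in general: a thin triangle with one short side $\sdl$ can have interior saddle connections in widely separated directions whose horoballs are far from $B_{\mathrm{dir}(\sdl)}$ in $D$. Nothing in Corollary~\ref{cor:balance_point_near_horob} gives this; that corollary controls horoballs for the sides of a \emph{single} Euclidean triangle relative to each other, and it does not propagate across a long chain without accumulation. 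So, as written, the core claim of $N$-independence remains unproved, and the argument would fail for triangles with many interior saddle connections. To repair your approach you would essentially need to rediscover the paper's idea: group the Euclidean triangles into the coarser fans whose apex-adjacent sides lie along the preferred paths, pick the apex points \emph{on those preferred paths}, and replace each preferred-path piece exactly once rather than chaining through the dual tree.
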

\begin{proof} Let $\delta' > 0$ be the constant provided by Lemma~\ref{L:slim_fans}. We may assume that the side $[f(x),f(y)]$ consists of a single saddle, which we denote $\eta$. To economize notation, set $x' =y$. Write $[f(x),f(z)] = \sdl_1\dotsb\sdl_m$ as a concatenation of saddle connections, and let $f(x) = a_0,a_1,\dotsc,a_m=f(z)$ be the sequence of endpoints of these saddle connections. Similarly write $[f(x'),f(z)] = \sdl'_1\dotsb\sdl'_n$ with $f(x') = a'_0,a'_1,\dotsc,a'_n=f(z)$ the corresponding sequence of cone points.

Letting $\alpha,\alpha_i,\alpha'_j\in\CP$ be the directions of $\eta$, $\sdl_i$ and $\sdl'_j$, respectively, we then have
\begin{equation}
\label{eqn:some_preferred_paths}
\varsigma(x,z) = h_0\gamma_1 h_1\gamma_2\dotsb \gamma_m h_m
\qquad\text{and}\qquad
\varsigma(x',z)=h'_0\gamma'_1 h'_1 \gamma'_2\dotsb \gamma'_n h'_n,
\end{equation}
where $\gamma_i = f_{X_{\alpha_i}}(\sdl_i)$ and each $h_i\subset D_{a_i}$ is a horizontal geodesic making the concatenation into a path, and similarly for $\gamma'_j = f_{X_{\alpha'_j}}(\sdl'_j)$ and $h'_j \subset D_{a'_j}$.

We now explain how to decompose $\Delta(x,y,z)$ into a union of fans as in Figure \ref{F:fans_and_pivots}, where the base points (called pivots) of the fans alternate sides of the triangle.

First, we claim that there is a single saddle connection joining $f(x) =a_0$ to $a'_1$ or else there is a single saddle connection joining $f(x')=a'_0$ to $a_1$ (or both).  If not, then the geodesics $[a_0,a'_1]$ and $[a'_0,a_1]$ are both nontrivial concatenations of saddle connections joining cone points in $\Delta(x,x',z)\subset E_0$.

Since $[a_0, a'_0]$ is a single saddle connection, Lemma \ref{L:triangle decomposition} implies that $\sdl_1 = [a_0,a_1]$ is the first saddle connection in the geodesic $[a_0,a'_1]$.  In particular, $[a_1,a'_1]$ is a subpath of $[a_0,a'_1]$. Similarly, $\sdl'_1=[a'_0,a'_1]$ is the first saddle connection of the geodesic $[a'_0,a_1]$, which contains $[a'_1,a_1]$ as a subpath. These observations imply that $\sdl_1[a_1,a'_1]\overline{\sdl'_1}$ is a concatenation of saddle connections making angle at least $\pi$ on both sides of each cone point encountered. Therefore it is the geodesic in $E_0$ connecting $f(x)$ to $f(x')$. But we are assuming that this geodesic consists of a single saddle connection, a contradiction.

By the previous paragraph, we may without loss of generality assume that $[a_0,a'_1]$ is a single saddle connection. Let $j_1\in \{1,\dotsc,n\}$ be the largest index such that $[a_0,a'_{j_1}]$ is a single saddle connection.

If $j_1 = n$, then $\Delta(x,x',z)$ is evidently a fan and the conclusion follows from Lemma~\ref{L:slim_fans}. We may therefore assume $j_1 < n$, in which case the cone points $a_0,a'_{j_1},f(z)$ span a nondegenerate geodesic triangle with the side $[a_0,a'_{j_1}]$ consisting of a single saddle connection. The above observations now imply that $[a'_{j_1},a_1]$ is a single saddle connection, and we are justified in letting $i_1\in \{1,\dotsc, m\}$ be the largest index such that $[a'_{j_1},a_{i_1}]$ is a single saddle connection.

Continuing in this manner, we recursively choose indices $j_1,i_1,j_2,i_2,\dotsc$, terminating when some $j_k = n$ or some $i_k = m$, with  the defining property that $j_{k+1}$ is the largest index in $\{j_{k}+1,\dotsc, n\}$ such that $[a_{i_k},a'_{j_{k+1}}]$ is a single saddle connection and $i_{k+1}$ is the largest index in $\{i_{k}+1, \dotsc, m\}$ such that $[a'_{j_{k+1}},a_{i_{k+1}}]$ is a single saddle connection. In this way, we decompose the triangle $\Delta(x,x',z)$ into fans based at the {\em pivot vertices} $a_0,a'_{j_1},a_{i_1},\dotsc$. For the sake of argument, we may assume some $i_k = n$ so that the situation is as depicted in Figure~\ref{F:fans_and_pivots}.

\begin{figure}[h]
\begin{center}
\begin{tikzpicture}[scale = 1]
\draw(0,0) -- (-10,-2) -- (-10,2) -- (0,0);
\draw[line width=1.5] (0,0) -- (-2,-.4);
\draw (-2,-.4) -- (-.5,.1);
\draw (-2,-.4) -- (-1,.2);
\draw (-2,-.4) -- (-1.5,.3);
\draw (-2,-.4) -- (-2,.4);
\draw (-2,-.4) -- (-2.5,.5);
\node at (-2,-.4) [below]  {$a'_{j_4}$};
\draw[line width=1.5] (-2,-.4) -- (-3,.6);
\node at (-3, .6) [above] {$a_{i_3}$};
\draw[line width=1.5] (-3,.6) -- (-3,-.6);
\draw (-3,-.6) -- (-3.5,.7);
\draw (-3,-.6) -- (-4,.8);
\draw (-3,-.6) -- (-4.5,.9);
\node at (-3,-.6) [below] {$a'_{j_3}$};
\draw[line width=1.5] (-3,-.6) -- (-5,1);
\draw (-5,1) -- (-3.25,-.65);
\draw (-5,1) -- (-3.5,-.7);
\draw (-5,1) -- (-3.75,-.75);
\draw (-5,1) -- (-4,-.8);
\draw (-5,1) -- (-4.25,-.85);
\draw (-5,1) -- (-4.5,-.9);
\draw (-5,1) -- (-4.75,-.95);
\node at  (-5,1) [above] {$a_{i_2}$};
\draw[line width=1.5] (-5,1) -- (-5,-1);
\node at (-5,-1) [below] {$a'_{j_2}$};
\draw[line width=1.5] (-5,-1) -- (-7,1.4);
\draw (-7,1.4) -- (-6,-1.2);
\draw (-7,1.4) -- (-7,-1.4);
\node at (-7,1.4) [above] {$a_{i_1}$};
\draw[line width=1.5] (-7,1.4) -- (-8,-1.6);
\draw (-8,-1.6) -- (-7.5,1.5);
\draw (-8,-1.6) -- (-8,1.6);
\draw (-8,-1.6) -- (-8.5,1.7);
\draw (-8,-1.6) -- (-9,1.8);
\draw (-8,-1.6) -- (-9.5,1.9);
\node at (-8,-1.6) [below] {$a'_{j_1}$};
\draw[line width=1.5] (-8,-1.6) -- (-10,2);
\draw (-10,2) -- (-9.5,-1.9);
\draw (-10,2) -- (-9,-1.8);
\draw (-10,2) -- (-8.5,-1.7);
\draw[line width=1.5] (-10,2) -- (-10,-2);
\node at (-10.4,2.2)  {$f(x)=a_0 = f(x_0)$};
\node at (-10.4,-2.2) {$f(x')=a'_0$};
\node at (.3,0) [above]  {$a_{i_k} = f(z)$};
\end{tikzpicture}
\caption{Decomposing a triangle into fans based at pivot vertices.}
\label{F:fans_and_pivots}
\end{center}
\end{figure}
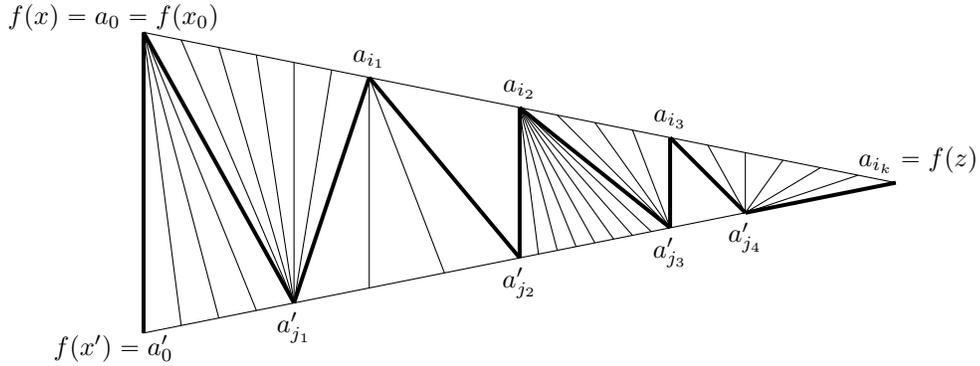

For each of the pivot vertices, $a_0,a'_{j_1},a_{i_1},\dotsc, a_{i_{k-1}},a'_{j_k}$, we choose a point in the corresponding Teichm\"uller disk as follows: Recall our notation (\ref{eqn:some_preferred_paths}) for the preferred paths $\varsigma(x,z)$ and $\varsigma(x',z)$. Let $x_0\in D_{a_0}$ be the terminal endpoint of the initial horizontal segment $h_0$ of $\varsigma(x,z)$. That is, $x_0$ is the intersection of $h_0$ and $\gamma_1$, and thus lies in the designated fiber $E_{X_{\alpha_1}}$ over the the boundary of the horoball $\partial B_{\alpha_1}$ for the direction $\alpha_1$ of the saddle connection $\sdl_1$. 
For each $s \in \{1,\dotsc, k\}$, let $x'_s \in D_{a_{j_s'}}$ be any point on the horizontal segment $h'_{j_s}$ of $\varsigma(x',z)$ corresponding to the vertex $a'_{j_s}$. Similarly, for any $r\in \{1,\dotsc, k-1\}$, let $x_r\in D_{a_{i_r}}$ be any point on the horizontal segment $h_{i_r}$ of $\varsigma(x,z)$ corresponding to the vertex $a_{i_r}$. To round out the notation, we also set $x_{k} = z$. Notice that these choices decompose the preferred paths $\varsigma(x,z)$ and $\varsigma(x',z)$ into concatenations of preferred paths:
\begin{align*}
\varsigma(x,z) &= \varsigma(x,x_0)\varsigma(x_0,x_1)\varsigma(x_1,x_2)\dotsb\varsigma(x_{k-1},z),\text{ and}\\
\varsigma(x',z) &= \varsigma(x',x'_1)\varsigma(x'_1,x'_2)\dotsb\varsigma(x'_k,z).
\end{align*}

Now, for each $r\in \{1,\dotsc, k-1\}$, the three points $x'_r,x_r,x'_{r+1}\in \Sigma$ satisfy all of the hypotheses of the Fan Lemma~\ref{L:slim_fans}, including the furthermore hypothesis on the location of the pivot vertex with respect to the horoballs for the adjacent saddle connections. Therefore, Lemma~\ref{L:slim_fans} implies the sets
\[\hat\varsigma(x'_r, x_r) \cup \hat\varsigma(x_r, x'_{r+1})\quad\text{and}\quad \hat\varsigma(x'_r,x'_{r+1})\]
have Hausdorff distance at most $\delta'$.
Similarly, for each $s\in\{1,\dotsc,k\}$, we may apply Lemma~\ref{L:slim_fans} to the points $x_{s-1},x'_s,x_s\in \Sigma$ to bound the Hausdorff distance between
\[\hat\varsigma(x_{s-1},x'_s) \cup \hat\varsigma(x'_s,x_s) \quad\text{and}\quad \hat\varsigma(x_{s-1},x_s)\]
by $\delta'$.  Finally, observe that Lemma~\ref{L:slim_fans} implies $\Delta^{\hat\varsigma}(x,x',x_0)$ is $\delta'$--slim.

We now show that the triangle  $\Delta^{\hat\varsigma}(x,x',z)$ is $2\delta'$--slim. The above shows that the projected path $\hat\varsigma(x',z) = \hat\varsigma(x',x'_1)\cup\dotsb\cup\hat\varsigma(x'_k,z)$ is contained in the $\delta'$--neighborhood of the set
\[\Big(\hat\varsigma(x',x_0)\Big)\cup \Big(\hat\varsigma(x_0,x'_1)\cup\hat\varsigma(x'_1,x_1)\cup\dotsc\cup\hat\varsigma(x_{k-1},x'_k)\cup\hat\varsigma(x'_k,z)\Big),\]
which is in turn contained in the $\delta'$ neighborhood of
\[\Big(\hat\varsigma(x',x)\cup \hat\varsigma(x,x_0)\Big)\cup\Big(\hat\varsigma(x_0,x_1)\cup\dotsb\cup\hat\varsigma(x_{k-1},z)\Big) = \hat\varsigma(x',x)\cup\hat\varsigma(x,z).\]
Thus $\hat\varsigma(x',z)$ is contained in the $2\delta'$--neighborhood of $\hat\varsigma(x',x)\cup\hat\varsigma(x,z)$.

A similar argument shows that the projected preferred path
\[\hat\varsigma(x,z) = \Big(\hat\varsigma(x,x_0)\Big)\cup\Big(\hat\varsigma(x_0,x_1)\cup\dotsb\cup\hat\varsigma(x_{k-1},z)\Big)\]
is contained in the $2\delta$--neighborhood of $\hat\varsigma(x,x')\cup \hat\varsigma(x',z)$

Finally, we see that $\hat\varsigma(x,x')$ is contained in the $\delta'$--neighborhood of
\[\hat\varsigma(x,x_0)\cup \hat\varsigma(x_0,x')\]
which itself is contained in the $\delta'$--neighborhood of
\[\hat\varsigma(x,x_0)\cup \hat\varsigma(x',x'_1) \subset \hat\varsigma(x,z) \cup \hat\varsigma(x',z).\]
Thus each side of $\Delta^{\hat\varsigma}(x,x',z) = \Delta^{\hat\varsigma}(x,y,z)$ is contained in the $(2\delta'+2)$--neighborhood of the union of the other two, which proves the Lemma.

\end{proof} 

\subsection{Proving that general triangles are thin}
We are now ready to prove Theorem~\ref{T:preferred slim triangles}, which again we do by decomposing a general triangle into simpler ones, this time of the previous type where one side of the reference triangle in the base fiber is a single saddle connection.

\begin{proof} [Proof of Theorem~\ref{T:preferred slim triangles}]  Let $\delta''>0$ be as provided by Lemma~\ref{L:length one side slim} and set $\delta = 3\delta''$.  Let $x,y,z \in \Sigma$ be any three points, and we must prove that $\Delta^{\hat \varsigma}(x,y,z)$ is $\delta$--slim.  By Lemma~\ref{L:excising degenerate parts}, we may assume that $\Delta(x,y,z)$ is nondegenerate.  We show that $\hat \varsigma(x,y)$ is in $N_\delta(\hat \varsigma(x,z) \cup \hat \varsigma(y,z))$.  If any side of $\Delta(x,y,z)$ is a saddle connection, this follows from Lemma~\ref{L:length one side slim}, so we suppose that all sides have a least two saddle connections.

We appeal to Lemma \ref{L:triangle decomposition} to decompose $\Delta^{\hat \varsigma}(x,y,z)$ into two or three triangles, depending on the configuration of geodesics from $f(x)$ to points in $[f(y),f(z)]$.  To describe the decomposition, recall that for each cone point $w$ on the side $[f(y),f(z)]$ of $\Delta(x,y,z) = \Delta(f(x),f(y),f(z))$, other than $f(y)$ and $f(z)$, the geodesic segment from $f(x)$ to $w$ is the concatenation of two geodesic segments $\nu\mu$, where:
\begin{enumerate}
\item $\nu$ is a geodesic subsegment of either $[f(x),f(y)]$ or $[f(x),f(z)]$, possibly a single point, and
\item $\mu$ is a single saddle connection with interior contained in the interior of $\Delta(x,y,z)$.
\end{enumerate}
See Figure~\ref{F:opposite sides} above.

If for some cone point $w$ in the interior of $[f(y),f(z)]$, $\nu$ degenerates to the point $f(x)$ (as in the left-most picture in Figure~\ref{F:opposite sides}), we subdivide $\Delta(x,y,z)$ into two triangles along the segment $[f(x),w]$. We use this to subdivide $\Delta^{\hat \varsigma}(x,y,z)$ into two triangles by choosing some point $\tilde{w}$ along the horizontal piece of the preferred path $\varsigma(y,z)$ with $f(\tilde{w}) = w$, and subdividing into $\Delta^{\hat \varsigma}(x,y,\tilde{w})$ and $\Delta^{\hat \varsigma}(x,z,\tilde{w})$. Since $\tilde{w}$ lies on the preferred path $\varsigma(y,z)$, we note that $\varsigma(y,z) = \varsigma(y,\tilde{w})\cup\varsigma(\tilde{w},z)$. Noting also that the two triangles $\Delta(x,y,\tilde{w}) = \Delta(f(x),f(y),w)$ and $\Delta(f(x),f(z),w)$ in the subdivision of $\Delta(x,y,z)$ are both nondegenerate with one side consisting of a single saddle connection, Lemma~\ref{L:length one side slim} implies that $\Delta^{\hat \varsigma}(x,y,\tilde{w})$ and $\Delta^{\hat \varsigma}(x,z,\tilde{w})$ are $\delta''$--slim. Thus two applications of Lemma~\ref{L:length one side slim} give the desired containment:
\begin{align*}
\hat\varsigma(x,y) &\subset N_{\delta''}\big(\hat\varsigma(y,\tilde{w})\cup \hat\varsigma(\tilde{w},x)\big)\\
&\subset N_{\delta''}\big(\hat\varsigma(y,\tilde{w})\big)\cup N_{2\delta''}\big(\hat\varsigma(\tilde{w},z)\cup \hat\varsigma(z,x)\big)\\
&\subset N_{2\delta''}\big(\hat\varsigma(y,z)\cup\hat\varsigma(z,x)\big).
\end{align*}

It remains to consider the case that for every cone point $w$ in the interior of $[f(y),f(z)]$, the geodesic does {\em not} consist of a single saddle connection as in the left-most picture of Figure~\ref{F:opposite sides}. Let $f(y) = w_0,w_1,\dotsc, w_k = f(z)$ be the sequence of cone points along the $[f(y),f(z)]$. By hypothesis, for each $i\in \{0,\dotsc, k\}$, the geodesic $[f(x),w_i]$ has nondegenerate intersection $\nu$ with either $[f(x),f(y)]$ or $[f(x),f(z)]$. We may thus partition the index set
\[\{0,\dotsc,k\} = Y\sqcup Z\]
according to whether this initial segment $\nu$ of $[f(x),w_i]$ lies in $[f(x), f(y)]$ (for $i\in Y$) or in $[f(x),f(z)]$ (for $i\in Z$). Notice $0\in Y$ and $k\in Z$ by construction. Furthermore, if $i\in Y$ then $j\in Y$ for all $j\le i$; indeed, this follows from the fact that the geodesic $[f(x),w_j]$ must lie in the convex subset $\Delta(f(x),f(y),w_i)$ of $\Delta(f(x),f(y),f(z))$. Therefore, there exists an index $0 \le i < k$ such that $Y = \{0,\dotsc, i\}$ and $Z = \{i+1,\dotsc, k\}$.

There are three cases to consider: Firstly, if $0 < i < k-1$, we choose points $\tilde{w}_i,\tilde{w}_{i+1}\in \varsigma(y,z)$ along the corresponding horizontal pieces of the preferred path so that $f(\tilde{w}_i) = w_i$ and $f(\tilde{w}_{i+1}) = w_{i+1}$. The preferred paths $\varsigma(x,y)$ and $\varsigma(x,\tilde{w}_i)$ then share a degenerate initial segment, and we let $\tilde{v_i}$ be the endpoint of this common initial segment; that is, $\varsigma(x,y)\cap \varsigma(x,\tilde{w}_i) = \varsigma(x,\tilde{v}_i)$. Similarly choose $\tilde{v}_{i+1}$ so that $\varsigma(x,z)\cap \varsigma(x,\tilde{w}_{i+1}) = \varsigma(x,\tilde{v}_{i+1})$. This decomposes $\Delta(x,y,z)$ into three nondegenerate triangles $\Delta(\tilde{v}_i,y,\tilde{w}_i)$, $\Delta(x,\tilde{w}_i,\tilde{w}_{i+1})$, and $\Delta(\tilde{v}_{i+1},\tilde{w}_{i+1},z)$, each of which having one side a single saddle connection. Therefore we may apply Lemma~\ref{L:length one side slim} three times to conclude the containment
\begin{align*}
\hat\varsigma(y,x) &= \hat\varsigma(y,,\tilde{v}_i)\cup \hat\varsigma(\tilde{v}_i,x)\\
&\subset N_{\delta''}\big(\hat\varsigma(y,\tilde{w}_i)\cup \hat\varsigma(\tilde{w}_i,\tilde{v}_i)\big)\cup \hat\varsigma(\tilde{v}_i,x)\\
&\subset N_{\delta''}\big(\hat\varsigma(y,\tilde{w}_i)\cup \hat\varsigma(\tilde{w}_i,x)\big)\\
&\subset N_{2\delta''}\big(\hat\varsigma(y,\tilde{w}_i)\cup \hat\varsigma(\tilde{w}_i,\tilde{w}_{i+1})\cup\hat\varsigma(\tilde{w}_{i+1},x)\big)\\
&= N_{2\delta''}\big(\hat\varsigma(y,\tilde{w}_{i+1})\cup \hat\varsigma(\tilde{w}_{i+1},\tilde{v}_{i+1})\cup\hat\varsigma(\tilde{v}_{i+1},x)\big)\\
&\subset N_{3\delta''}\big(\hat\varsigma(y,\tilde{w}_{i+1})\cup \hat\varsigma(\tilde{w}_{i+1},z)\cup \hat\varsigma(z,\tilde{v}_{i+1})\cup\hat\varsigma(\tilde{v}_{i+1},x)\big)\\
&= N_{3\delta''}\big(\hat\varsigma(y,z)\cup \hat\varsigma(z,x)\big).
\end{align*}

Secondly, if $i = 0$, we similarly choose a point $\tilde{w}_1\in \varsigma(y,z)$ along a horizontal piece such that $f(\tilde{w}_1) = w_1$. Since $1\in Z$ by hypothesis, the paths $\varsigma(x,z)$ and $\varsigma(x,\tilde{w}_1)$ share an initial segment and we again choose $\tilde{v}_1\in \varsigma(x,z)$ so that $\varsigma(x,z)\cap \varsigma(x,\tilde{w}_1) = \varsigma(x,\tilde{v}_1)$. This decomposes $\Delta(x,y,z)$ into two nondegenerate triangles $\Delta(x,y,\tilde{w}_1)$ and $\Delta(\tilde{v}_1,\tilde{w}_1,z)$ which each have a side consisting of a single saddle connection. Thus we may apply Lemma~\ref{L:length one side slim} twice, as above, to obtain
\begin{align*}
\hat\varsigma(y,z) &\subset N_{\delta''}\big(\hat\varsigma(y,\tilde{w}_1)\cup\hat\varsigma(\tilde{w}_1,x)\big)\\
&= N_{\delta''}\big(\hat\varsigma(y,\tilde{w}_1)\cup\hat\varsigma(\tilde{w}_1,\tilde{v_1})\cup \hat\varsigma(\tilde{v}_1,x)\big)\\
&\subset N_{2\delta''}\big(\hat\varsigma(y,\tilde{w}_1)\cup\hat\varsigma(\tilde{w}_1,z)\cup\hat\varsigma(z,\tilde{v_1})\cup \hat\varsigma(\tilde{v}_1,x)\big)\\
&= N_{2\delta'}\big(\hat\varsigma(y,z)\cup\hat\varsigma(z,x)\big),
\end{align*}
showing that $\Delta^{\hat\varsigma}(x,y,z)$ is $2\delta''$--slim. The remaining case $i = k-1$ is handled symmetrically by choosing $\tilde{w}_{k-1}\in \varsigma(y,z)$ with $f(\tilde{w}_{k-1}) = w_{k-1}$, and choosing $\tilde{v}_{k-1}$ so that $\varsigma(x,y)\cap \varsigma(x,\tilde{w}_{k-1}) = \varsigma(x,\tilde{v_{k-1}})$. This decomposes $\Delta(x,y,z)$ into two triangles $\Delta(y,\tilde{v}_{k-1},\tilde{w}_{k-1})$ and $\Delta(x,\tilde{w}_{k-1},z)$ to which we may apply Lemma~\ref{L:length one side slim} and conclude $\hat\varsigma(x,y)\subset N_{2\delta''}\big(\hat\varsigma(y,z)\cup\hat\varsigma(z,x)\big)$ as above. This covers all the cases and completes the proof of Theorem~\ref{T:preferred slim triangles}.
\end{proof}

\subsection{Loxodromic elements} \label{S:loxodromics}
Recall that an isometry $\phi$ of a hyperbolic space $Y$ is \emph{loxodromic} if its \emph{translation length} 
\[t(\phi) = \lim_{n\to \infty} \frac{d(y, \phi^n(y))}{n}\quad\text{for any}\quad y\in Y\]
is positive. Now that we know $\hat E$ is hyperbolic, we can characterize which elements of $\Gamma$ act loxodromically on $\hat E$. The characterization is quite simple to state:

\begin{proposition}\label{prop:loxodromics}
An element $\gamma \in \Gamma$ acts loxodromically on $\hat E$ if and only if $\gamma$ does not stabilize any vertex $v\in \vtx$.
\end{proposition}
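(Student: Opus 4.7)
The forward implication is immediate: if $\gamma\cdot v=v$ for some $v\in\vtx$, then $v\in\hat E$ is a fixed point of $\gamma$, so $t_{\hat E}(\gamma)=0$ and $\gamma$ is not loxodromic.

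For the converse, suppose $\gamma$ fixes no vertex in $\vtx$, and let $g\in G$ denote its image under $\Gamma\to G$. The proof proceeds by cases on the Fuchsian type of $g$ acting on $D$. If $g$ is hyperbolic, then $\hat D$ is hyperbolic (since $G$ is a lattice Fuchsian group, hence relatively hyperbolic relative to its parabolic subgroups), and $g$ acts loxodromically on $\hat D$: its axis in $D$ has endpoints at non-parabolic points of $\partial D$, so it survives as a quasi-geodesic after coning off the horoballs. Since $\hat\pi\colon\hat E\to\hat D$ is $\Gamma$-equivariant and $1$-Lipschitz, we obtain $t_{\hat E}(\gamma)\ge t_{\hat D}(g)>0$.

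If $g$ is parabolic fixing some direction $\alpha\in\CP$, then $\gamma$ preserves $\CB_\alpha$ and acts by isometries on the simplicial tree $T_\alpha$. Since $\gamma$ fixes no vertex of $T_\alpha$, by Bass--Serre theory it either translates along an axis or inverts an edge; the latter can be disposed of by passing to $\gamma^2$, which still fixes no vertex of $\vtx$ since $\gamma$ swaps two distinct vertices. Thus we may assume $\gamma$ translates a $T_\alpha$-axis by positive amount $\ell$. Choose a cone point $x\in\Sigma_{X_\alpha}$ on the spine of a vertex of this axis, and form the $\gamma$-invariant bi-infinite concatenation $\omega = \bigcup_{n\in\mathbb{Z}}\hat\varsigma(\gamma^n x,\gamma^{n+1}x)$ of collapsed preferred paths from Section~3. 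Each piece of $\omega$ has uniformly bounded length by $\Gamma$-equivariance. The slim-triangles property for collapsed preferred paths (Theorem~\ref{T:preferred slim triangles}) together with hyperbolicity of $\hat E$ (Theorem~\ref{T:hyperbolicity of hat E}) forces $\omega$ to be a $\gamma$-invariant quasi-geodesic: were $\hat d(x,\gamma^n x)$ to grow sub-linearly, slim-triangle comparisons would produce a coarse fixed point for $\gamma$ along $T_\alpha$, contradicting the tree-loxodromic action. Hence $t_{\hat E}(\gamma)>0$.

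Finally, if $g$ is elliptic (including trivial), then $g$ has finite order and some power $\gamma^N$ lies in $\pi_1S$. Torsion-freeness of $\pi_1S$ forces either $\gamma^N=1$ or $\gamma^N$ of infinite order. If $\gamma^N=1$, then $\gamma$ is finite-order and acts on $\hat E$ with bounded orbits; cocompactness of the $\Gamma$-action (Corollary~\ref{cor:Cayley_graph}) together with hyperbolicity provides a coarse fixed point whose stabilizer is conjugate into some $\Upsilon_i$, forcing $\gamma$ to stabilize a vertex of $\vtx$ and contradicting the hypothesis. Otherwise $\gamma^N\in\pi_1S$ is nontrivial: since $\pi_1S$ is the kernel of $\Gamma\to G$, $\gamma^N$ preserves every fiber and every $T_\alpha$, and as it fixes no vertex while being torsion-free (excluding edge-inversions) it translates every $T_\alpha$. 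The argument of the parabolic case then shows $\gamma^N$ is loxodromic on $\hat E$, whence so is $\gamma$. The principal obstacle throughout is the parabolic case: since $T_\alpha$ sits in $\hat E$ with nontrivial distortion, tree-translation does not directly transfer to $\hat E$-translation, and the combinatorial/preferred-path machinery developed in Section~3 is essential for producing the $\gamma$-invariant quasi-geodesic in $\hat E$.
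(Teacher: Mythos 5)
The forward direction and the pseudo-Anosov case match the paper's argument. However, the parabolic/trivial case---which is the technical heart of the proposition---contains a genuine gap, and this is precisely the case you yourself flag as "the principal obstacle."

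You form the $\gamma$-invariant concatenation $\omega$ of collapsed preferred paths and then assert that it must be a quasi-geodesic because, "were $\hat d(x,\gamma^n x)$ to grow sub-linearly, slim-triangle comparisons would produce a coarse fixed point." This inference is not justified. An isometry of a hyperbolic space with zero asymptotic translation length need not have bounded orbits: it could be parabolic, in which case there is no coarse fixed point. Nothing in the slim-triangles machinery of \S\ref{S:slim triangles} rules this out. Since $T_\alpha$ is (exponentially) distorted in $\hat E$, tree-loxodromicity is genuinely weaker than $\hat E$-loxodromicity, and the burden is to show that the distortion of the orbit $\{\gamma^n x\}$ is at most polynomial, not that a sub-linear lower bound leads to a fixed point. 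The paper's proof supplies exactly this missing ingredient through Claim~\ref{claim:loxo-spine-diam-bound}: a $\gamma$-equivariant coarse projection $r_\omega\colon \vtx \to \mathbb{P}(\omega)$ with \emph{uniformly bounded image diameter} for every spine. That bound, combined with Lemma~\ref{L: combinatorial path} (bounding the number of horizontal jumps between two points of $\vtx$ by $C\hat d$), converts the tree translation length into a linear lower bound on $\hat d(v,\gamma^n v)$. Proving Claim~\ref{claim:loxo-spine-diam-bound} is itself substantial, requiring a Gauss--Bonnet argument on geodesics in $E_X$ when $g=1$ and the dynamics of $g^m$ on $\mathbb{P}^1(q)\setminus\{\alpha\}$ when $g\ne 1$. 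None of this appears in your proposal, and I don't see how to substitute for it with slim-triangle comparisons alone.

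Two smaller issues. First, your claim that if $\gamma$ inverts an edge then "$\gamma^2$ still fixes no vertex since $\gamma$ swaps two distinct vertices" is backwards: if $\gamma$ swaps $v$ and $w$ then $\gamma^2$ fixes \emph{both}. (This doesn't wreck the argument because an edge-inverting $\gamma$ already has a fixed point at the edge midpoint and is therefore not loxodromic, consistent with the proposition; but your stated reasoning is incorrect.) Second, your elliptic case is over-engineered: the paper simply passes to a power so that $g$ is parabolic or trivial, picks any $\alpha$ fixed by $g$, and runs the parabolic argument uniformly; distinguishing $\gamma^N=1$ from $\gamma^N\in\pi_1S$ nontrivial is unnecessary.
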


\begin{proof}
Obviously $\gamma$ cannot be loxodromic if it fixes a point. Conversely, suppose $\gamma$ does not fix any point of $\vtx$ and let $g\in G$ be the image of $\gamma$ in the quotient. If $g$ is a pseudo-Anosov element of $G\le \Mod(S)$, then $g$ acts with positive translation length on $\hat D$ and therefore $\gamma$ acts with positive translation length on $\hat E$ (since $\hat \pi\colon \hat E\to \hat D$ is Lipschitz). 
Hence, after passing to a power if necessary, we may assume that $g$ is parabolic or trivial in $G$. Either way, we may choose some $\alpha \in \CP$ that is fixed by $g$.  

It follows that $\gamma$ preserves the subsets $\partial \CB_\alpha\subset \bar E$ and $T_\alpha \subset \hat E$. Since $\gamma$ does not stabilize any vertex of $T_\alpha$, it restricts to a loxodromic isometry of the tree $T_\alpha$. Let
\[\omega = \dotsb\omega_{-2}\omega_{-1}\omega_0\omega_1\omega_2\dotsb\]
be the bi-infinite axis of $\gamma$ in $T_\alpha$, viewed as an edge path in which each $\omega_i$ denotes an edge of $T_\alpha$ between, say, vertices $v_{i-1},v_i\in T_\alpha^{(0)}\subset \vtx$. 

For any $X\in \partial B_\alpha$ and $i\in \mathbb Z$, let $\Omega^i_X \subset E_X$ be the closure of the preimage of the interior of the edge $\omega_i$ under $P\vert_{E_X}$. Thus $\Omega^i_X$ is a closed strip in direction $\alpha$ that is foliated by lines in direction $\alpha$ and separates $E_X$ into two pieces. Note that the strip $\Omega^i_X$ intersects each of the trees $\theta^{v_{i-1}}_X$ and $\theta^{v_i}_X$ in a line, namely, one of the boundary components of the strip. Hence, two consecutive $\Omega^i_X$ and $\Omega^{i+1}_X$ have intersection which is either empty, a saddle connection of $\theta^{v_i}_X$, or a single cone-point of $\theta^{v_i}_X$. Let
\[\Omega_X = \bigcup_i \Omega^i_X\]
be the union of these strips.  Each connected component $\Omega'$ of $\Omega_X$ is a union of strips $\Omega^j_X\cup\Omega^{j+1}_X\cup \dots\cup \Omega^{k}_X$ where consecutive ones $\Omega^i_X$, $\Omega^{i+1}_X$ have non-empty intersection. Note that each such $\Omega'$ is convex, and that the connected components of its boundary are each contained in a spine $\theta^{v_i}_X$ for some $i\in \mathbb Z$.

Let $r\colon \partial \CB_\alpha\to \omega$ be the composition of $P\vert_{\partial \CB_\alpha}\colon \partial \CB_\alpha\to T_\alpha$ with the closest-point-projection $T_\alpha\to \omega$. Since $P$ is $\Gamma$--equivariant and $\gamma$ is an $\omega$--preserving isometry of $T_\alpha$, this map $r$ is equivariant with respect to the action of the cyclic group $\langle \gamma \rangle$; that is, $r(\gamma x) = \gamma r(x)$. 
We note that $r\vert_{E_X} = r \circ f_{X,Y}$ for any $X,Y\in \partial B_\alpha$, which follows from the fact that in this case $P\vert_{E_Y} = P\circ f_{X,Y}$.
Next define a map 
\[r_\omega\colon \vtx \to 2^\omega\quad\text{given by}\quad
r_\omega(v) = r(\theta^v_X)\subset \omega\quad\text{for any}\quad X\in \partial B_\alpha,\]
which is well-defined since $\theta^v_X = f_{X,Y}(\theta^v_Y)$. Furthermore, since $\gamma \theta^v_X = \theta^{\gamma v}_{g X}$, we have
\[\gamma r_\omega(v) = \gamma r(\theta^v_X) = r(\gamma \theta^v_X) = r(\theta^{\gamma v}_{g X}) = r_\omega(\gamma v).\]
That is, $r_\omega$ is equivariant with respect to the actions of $\gamma$ on $\vtx$ and $\omega$.

The heart of the proposition is captured in the following claim:

\begin{claim}
\label{claim:loxo-spine-diam-bound}
There exists  $M> 0$ such that $\diam_{T_\alpha}(r_\omega(v)) \le M$ for all $v\in \vtx$.
\end{claim}

Assuming the claim, let us deduce the proposition. For $u,v\in \vtx$, define
\[d_\omega(u,v) = \diam_{T_\alpha}(r_\omega(u)\cup r_\omega(v)).\]
If $u,v$ are connected by a horizontal jump in $\hat E$, then the spines $\theta^u_X$ and $\theta^v_X$ intersect for any $X\in D$. Thus by the claim $d_\omega(u,v)\le 2M$. By Lemma~\ref{L: combinatorial path}, there is some $C$ such that any two vertices $u,v\in \vtx$ may be connected by a combinatorial path consisting of at most $C\hat d(u,v)$ jumps. Therefore
\[d_\omega(u,v) \le 2MC\hat d(u,v)\quad\text{for all}\quad u,v\in \vtx.\]
Finally, if $t(\gamma) > 0$ denotes the translation length of $\gamma$ acting on $T_\alpha$, then by definition for all $v\in\vtx$ and $n\ge 0$ the equivariance $r_\omega(\gamma^n v) = \gamma^n r_\omega(v)$ implies that
\[ 2MC\hat d(v, \gamma^nv )\ge  d_\omega(v,\gamma^n v) = \diam_{T_\alpha}(r_\omega(v) \cup \gamma^n r_\omega(v)) \ge n t(\gamma).\]
Thus $\gamma$ indeed acts loxodromically on $\hat E$ as claimed.

It remains to prove Claim~\ref{claim:loxo-spine-diam-bound}. Let us first describe the restriction of $r$ to $E_X$ for $X\in \partial B_\alpha$. Since $P$ maps $\Omega_X$ to $\omega$, we see that $r$ agrees with $P$ on $\Omega_X$. For each component $U$ of $E_X\setminus \Omega_X$, there exists some $i\in \mathbb Z$ such that $r(\overline{U}) = \{v_i\}$ with $\partial \overline{U}\subset \theta^{v_i}_X$. From this it follows that if $r_\omega(v)$ is not a single point, then $\theta^v_X\cap \Omega_X\ne \emptyset$ and moreover that  $r(\theta^v_X\cap \overline{U}) = r(\theta^v_X\cap \partial \overline{U})$ for every such component $U$.
Since $\partial \overline{U}\subset \Omega_X$, we conclude $r_\omega(v) = r(\theta^v_X\cap \Omega_X)$ in this case. Hence, in Claim~\ref{claim:loxo-spine-diam-bound} it suffices to bound $\diam(r(\theta^v_X\cap \Omega_X))$. 

We also note that when $r_\omega(v)$ is not a singleton, $\theta^v_X\cap \Omega_X$ is contained in a convex component of $\Omega_X$. Indeed, the convex set $(P\vert_{E_X})^{-1}(\omega)$ is the union of $\Omega_X$ with $\cup_{i\in \mathbb Z} \theta^{v_i}_X$. Thus if $\theta^v_X$ were to intersect distinct components of $\Omega_X$, then $\theta^v_X$ must intersect some $\theta^{v_i}_X$ in a saddle connection. But that implies $\alpha(v) = \alpha(v_i) = \alpha$ and therefore that $P(\theta^v_X)$ is a single point, contrary to our assumption on $v$.  \qedhere

We now complete the proposition by proving Claim~\ref{claim:loxo-spine-diam-bound} in two cases:

\begin{proof}[Proof of Claim~\ref{claim:loxo-spine-diam-bound} when $g =1$.]
Fix any $X\in \partial B_\alpha$. In this case $\gamma \Omega_X = \Omega_{gX} = \Omega_X$, so that $\gamma$ gives an isometry of $E_X$ that preserves the convex set $\Omega_X$.

The strips $\Omega^i_X$ limit, as $i\to \pm\infty$, on two points in the $\mathrm{CAT}(0)$ boundary of $E_X$. Let $\ell$ denote a geodesic joining these boundary points, which is a $\gamma$--invariant, bi-infinite geodesic that crosses each strip $\Omega^i_X$.
Note that $\ell$ cannot lie in any thickened spine $\bTheta^v_X$, since then $\gamma$ would fix  $v\in \vtx$. Therefore the saddle connections comprising $\ell$ do not  all have the same direction.
Write
\[\ell =  \ldots \sdl_{-2} \sdl_{-1} \sdl_0 \sdl_1 \sdl_2 \ldots \]
where each $\sdl_i$ is a maximal concatenation of saddle connections in a single direction. There exists $\mu > \pi$ so that $\sdl_i$ and $\sdl_{i+1}$ form an angle of least $\mu$ on both sides.
 
Consider any vertex $v\in \vtx$. By the above discussion, it suffices to take any points $y,z\in \theta^v_X\cap \Omega_X$ with $r(y)\ne r(z)$.   
Let $U$ be the convex component of $\Omega_X$ containing $\theta^v_X\cap \Omega_X$.
Choose $i\in \mathbb Z$ so that $y\in \Omega^i_X$, and let $y'$ be the unique point of $\ell\cap \Omega^i_X$ with $r(y') = y$.  That is, $y'$ is the intersection of $\ell$ with the leaf $\mathcal{F}_y$ through $y$ of the foliation of $\Omega^i_X$ in direction $\alpha$ (note that $\ell\cap \Omega^i_X$ is a segment in a direction distinct from $\alpha$). Choose $z'\in \ell$ and $\mathcal{F}_z$ similarly. Finally, let $\eta\subset \theta^v_X$ be the geodesic from $y$ to $z$ and $\ell_0\subset \ell$ the geodesic from $y'$ to $z'$.

If $\ell_0$ and $\eta$ are disjoint, then the geodesics $\mathcal{F}_y,\eta,\mathcal{F}_z,\ell_0$  form a geodesic quadrilateral in $U$ with no cone points in its interior (since there are no cone points in the interior of $\Omega_X$). Otherwise $\ell_0$ and $\eta$ intersect and we get two geodesic triangles connected by the (possibly degenerate) segment $\ell_0\cap \eta$. By Gauss--Bonnet, doubling this picture produces an object $W$ that is either a sphere with total curvature $4\pi$ (in the quadrilateral case) or two spheres (when $\ell_0\cap \eta\ne\emptyset$) with total curvature $8\pi$. The only points of positive curvature come from the four corners $y,y',z,z'$ and the two endpoints of $\ell_0\cap \eta$ (when it exists). Further, each of these points contributes positive curvature at most $2\pi$. On the other hand, each cone point juncture $\sdl_{j}\sdl_{j+1}$ in the interior of $\ell_0\setminus \eta$ contributes negative curvature at most $2(\pi -\mu) < 0$ to $W$. Note also that $\ell_0\cap \eta$ is necessarily contained in a single segment $\sigma_j$ by construction, since all of $\eta$ lies in the single direction $\alpha(v)$. Therefore, if $m$ denotes the number of segments $\sdl_j$ that have nondegenerate intersection with $\ell_0$ we conclude that 
\[
4\pi \le 2\pi \chi(W) \le 6(2\pi)  + 2(\pi -\mu)(m-3) \implies m\le \frac{4\pi}{\mu-\pi} + 3.\]
Since $\ell$ is $\gamma$--invariant, there is some maximum number $N$ of strips crossed by any segment $\sdl_i$. Thus $\ell_0$ crosses at most $3N + 4N\pi/(\mu - \pi)$ strips, and we are done.
\end{proof}

\begin{proof}[Proof of Claim~\ref{claim:loxo-spine-diam-bound} when $g\ne 1$]
In this case $g$ is a parabolic element of the Veech group $G = G_q$ fixing the direction $\alpha \in \mathbb{P}^1(q)$. Moreover, the iterates $g^m$ converge uniformly on compact subsets of $\mathbb P^1(q)\setminus\{\alpha\}$ to the constant map $\mathbb P^1(q)\setminus\{\alpha\}\mapsto \{ \alpha\}$.

Note that any geodesic connecting the two boundary components of a strip $\Omega^i_X$ is a straight segment in a single direction with no cone points in its interior. Thus for $i\in \mathbb Z$ we are justified in defining $A^i_X\subset\mathbb P^1(q)$ to be the set of directions of all segments of the form $\ell\cap \Omega^i_X$, where $\ell$ is any geodesic segment from $\Omega^{i-1}_X$ to $\Omega^{i+1}_X$. Note that such a segment $\ell\cap \Omega^i_X$ has it endpoints in the closest-point-projections of $\Omega^{i\pm 1}_X$ to $\Omega^i_X$. Since such a closest-point-projection is equal to a saddle connection or cone point in $\partial \Omega^i_X$, we see that $A^i_X$ is a compact subset of $\mathbb P^1(q)\setminus \{\alpha\}$.

Let $k\in \mathbb N$ be such that $\gamma \omega_i = \omega_{i+k}$ for all $i\in \mathbb Z$. Thus for any $X\in \partial B_\alpha$ we have $\gamma \Omega^{i}_X= \Omega^{i+k}_{gX}$.
By definition, it follows that $g A^i_X = A^{i+k}_{gX}$. Since the affine maps $f_{Y,Z}\colon E_Z\to E_Y$ fix the directions of lines (viewed in  $\mathbb P^1(q)$), we conclude that
\[A^{i+k}_X = f_{X,gX}(A^{i+k}_{gX}) = g A^{i}_X.\]
By this and the uniform convergence of the maps $\{g^m\}$ on $\mathbb P^1(q)\setminus\{\alpha\}$, we may choose $m\ge 1$ such that $A^{i+km}_X  = g^m A^i_X$ and $A^i_X$ are disjoint for all $i\in \mathbb Z$.

To prove the claim, consider any $v\in \vtx$. Note that all saddle connections of $\theta^v_X$ lie in one fixed direction $\alpha(v)$. Suppose $\theta^v_X$ intersects strips $\Omega^{i-1}_X$ and $\Omega^{i+km+1}_X$. It follows that $\theta^v_X$ intersects every strip $\Omega^j_X$ with $i\le j\le i+km$, since these each separate $\Omega^{i-1}_X$ from $\Omega^{i+km+1}_X$. Therefore the direction $\alpha(v)$ lies in both $A^i_X$ and $A^{i+km}_X$, contradicting our choice of $m$. This proves that $\theta^v_X$ can intersect at most $km+2$ strips and  bounds $\diam_{T_\alpha}(r_\omega(v))$.
\end{proof}
\end{proof}

\section{The locally homogeneous geometry of $E$} \label{S:homogeneous}

In this section, we explain in more detail the homogeneous geometry on which $E$ (and $\bar E$) is modeled, away from the singular locus $\Sigma \subset E$.  This is one of the 4-dimensional geometries (in the sense of Thurston \cite[\S3.8]{Thurston-book}) studied in the thesis of Filipkiewicz \cite[Chapter 5]{Filipkiewicz} (see also \cite[Section 7.1]{Hillman} and \cite{Wall} where the geometry is called $\mathbb F^4$). In the sequel \cite{DDLSII} we will use the fact that isometries of $E$ are necessarily fiber preserving.  The proof of this fact requires a description of local isometries of the homogeneous geometry that we could not find in the literature, and so we carry out the required calculations here.  Along the way, we compute the geometry's full isometry group (see Proposition~\ref{P:computed isometry group}).  This seems to be known (see \cite{Wall}), though we could only find a computation of the component containing the identity (see \cite{Filipkiewicz}).

To describe the geometry, consider the area-preserving affine group of $\mathbb R^2$
\[ \frak G = \SAff^\pm(\mathbb R^2). \]
There is a homomorphism $\frak G \to \SL^\pm_2(\mathbb R)$ sending an affine map to its derivative, expressed as a matrix of determinant $\pm 1$, with respect to the standard basis.  The kernel is the group of translations isomorphic to $\mathbb R^2$, and the linear action of $\SL^\pm_2(\mathbb R)$ on $\mathbb R^2$ defines a splitting as a semi-direct product,
\[ \frak G = \mathbb R^2 \rtimes \SL^\pm_2(\mathbb R). \]
We compose the derivative homomorphism above with the surjective homomorphism $\SL^\pm_2(\mathbb R) \to \PGL_2(\mathbb R)$, given by taking the quotient by the center.  This in turn defines an action of $\frak G$ on the hyperbolic plane $\mathbb H^2$ by isometries (acting as M\"obius transformations).
Combing these actions, we obtain a transitive action of $\frak G$ on the space $\frak X = \mathbb H^2 \times \mathbb R^2$, which is  by hyperbolic isometries in the first factor and affine transformation in the second factor.\label{ind:sp affine}  

In the discussion above, it is most natural to consider the upper-half plane model of the hyperbolic plane, $\mathbb H^2 = \{(x,y) \in \mathbb R^2 \mid y >0 \}$.  For any $\kappa > 0$, $\mathbb H^2$ can be equipped with the $\PGL_2(\mathbb R)$--invariant metric $ds^2 = \tfrac{1}{\kappa y^2}(dx^2 +dy^2)$ of constant curvature $-\kappa<0$ ($\kappa = 4$ is the Poincar\'e metric and $\kappa = 1$ the hyperbolic metric).  These coordinates $(x,y)$ on $\mathbb H^2$, together with standard coordinates $(s,t)$ on $\mathbb R^2$ define global coordinates $(x,y,s,t)$ on $\frak X$.  The stabilizer of $(0,1,0,0)$ is $\frak K = \OO(2) < \SL_2^\pm(\mathbb R)$, exhibiting  $\frak X = \frak G/ \frak K$ as a homogeneous space for $\frak G$.  Since $\frak K$ is a compact subgroup, we can construct an invariant Riemannian metric.  In the next section, we explicitly describe such a metric.

\subsection{Computations}

Here we define the metric $g$ on $\frak X$, and compute its expression in coordinates.  Since the metric of constant curvature $-\kappa$ on each slice $\mathbb H^2 \times \{\ast\}$ is already $\frak G$--invariant, we will choose our metric to agree with this metric on these slices.  Projecting onto the first factor, $\frak X$ fibers over $\mathbb H^2$ in a $\frak G$--equivariant way.  We declare the $\mathbb R^2$--fibers to be orthogonal to the slices $\mathbb H^2 \times \{\ast\}$ and define the metric on the fiber over $(0,1)$ to be the standard Euclidean metric.  
Since the stabilizer in $\frak G$ of this fiber is $\mathbb R^2 \rtimes \OO(2)$, and since this acts transitively by isometries on this metric on the fiber, we can compute this metric at $(0,1,0,0)$ and push it around by a  subgroup of $\frak G$ acting transitively on $\frak X$.  Finally, we note that we may {\em globally} scale this metric by any positive constant without affecting any of the above properties of this metric (i.e.~the fact that the fibers are Euclidean, orthogonality to the $\mathbb H^2$--slices, etc), {\em except} that the curvature scales.  In particular, without loss of generality we may assume $\kappa = 1$ for the remainder of this section, and we do so.  See the last paragraph of \S\ref{S:geometry isometries} for more on the choices of $\frak G$--invariant metrics.

Now, at the point $(0,1,0,0)$ the metric is then given by
\[ g(0,1,0,0) = \left( \begin{array}{cccc} \tfrac{1}{y^2} & 0 & 0 & 0\\ 0 & \tfrac{1}{ y^2} & 0 &0 \\ 0 & 0 & 1 & 0 \\ 0 & 0 & 0 & 1 \end{array} \right).\]
In terms of coordinates $(x,y,s,t)$ on $\frak X$, we want to write the metric $g(x,y,s,t)$.  We use the subgroup
\[ \frak H = \left\{ \left( \begin{array}{cc} a & b \\ 0 & \tfrac1a \end{array} \right) \mid a,b \in \mathbb R, a >0 \right\},\]
to translate $(0,1,0,0)$ to any point $(x,y,0,0)$.  We note that since $\mathbb R^2 < \frak G$ acts transitively on each $\mathbb R^2$--fiber with trivial derivative in these coordinates, the metric is independent of $(s,t)$ and hence $g(x,y,s,t) = g(x,y,0,0)$, so pushing the metric around by the subgroup $\frak H$ suffices (in particular, the metric is Euclidean on each $\mathbb R^2$--fiber).

Now the subgroup $\frak H$ acts as follows:
\[ \left( \begin{array}{cc} a & b \\ 0 & \tfrac1a \end{array} \right) \cdot (x,y,s,t) = (a^2x+ab,a^2y,as+bt,\tfrac{t}a).\]
Given $a,b \in \mathbb R$, $a >0$, set $F_{a,b}(x,y,s,t) = (a^2x+ab,a^2y,as+bt,\tfrac{t}a)$, and observe that the derivative of $F_{a,b}$ at any point $(x,y,s,t)$ is given by
\[ (dF_{a,b})_{(x,y,s,t)} = \left( \begin{array}{cccc} a^2 & 0 & 0 & 0 \\ 0 & a^2 & 0 & 0 \\ 0 & 0 & a & b\\ 0 & 0 & 0 & \tfrac1a \end{array} \right). \]
We drop the subscript $(x,y,s,t)$ since the derivative is constant.

Next, note that for $a = \sqrt{y}$ and $b=\tfrac{x}{\sqrt{y}}$ we have
\[ F_{\tiny{\sqrt{y},\tfrac{x}{\sqrt{y}}}}(0,1,0,0) = (x,y,0,0).\]
This is a bit cumbersome, so we write $\phi_{x,y} = F_{\tiny{\sqrt{y},\tfrac{x}{\sqrt{y}}}}$.
Thus, to compute the metric at $(x,y,0,0)$, we can push forward the metric $g(0,1,0,0)$ by the derivative of $\phi_{x,y}$.  Applying $d \phi_{x,y}$ to the basis vectors, we get
\begin{equation} \label{E:derivative 1} \begin{array}{lllll} d\phi_{x,y}(e_1) = ye_1, & &  d\phi_{x,y} (e_2) = ye_2, \\\\
d\phi_{x,y}(e_3) = \sqrt{y} e_3, & & d\phi_{x,y}(e_4) = \tfrac{x}{\sqrt{y}} e_3 + \tfrac{1}{\sqrt{y}}e_4. \end{array}
\end{equation}
We want to compute $g_{ij}(x,y,0,0) = g(x,y,0,0)(e_i,e_j)$, and by definition, we have
\[ g(x,y,0,0)(d\phi_{x,y}(e_i),d\phi_{x,y}(e_j)) = g(0,1,0,0)(e_i,e_j). \]
From this, the bilinearity of the metric, and \eqref{E:derivative 1}, we obtain a system of equations implicitly defining $g(x,y,0,0)$.  Solving this system we find
\begin{equation} \label{E:metric defined} g(x,y,s,t) = g(x,y,0,0) = \left( \begin{array}{cccc} \tfrac{1}{y^2} & 0 & 0 & 0 \\ 0 & \tfrac{1}{y^2} & 0 & 0 \\ 0 & 0 & \tfrac{1}y & -\tfrac{x}{y} \\ 0 & 0 & -\tfrac{x}{y} &y +\tfrac{x^2}y \end{array} \right).
\end{equation}

\subsection{Isometries} \label{S:geometry isometries}

We are now ready to describe the isometry group.
\begin{proposition} \label{P:computed isometry group} For the metric $g$ defined by \eqref{E:metric defined} we have $\Isom(\frak X,g) = \frak G$.   
\end{proposition}
The main ingredient---and really the point of interest for us---is the following.  To state it, let $\mathcal F_{\mathbb H}$ and $\mathcal F_{\mathbb R}$ denote the foliations of $\frak X$ whose leaves are the slices $\mathbb H^2 \times \{\ast\}$ and $\{\ast \} \times \mathbb R^2$, respectively.  We also use the same name for the foliations restricted to any open subset.
\begin{proposition} Any local isometry between open subsets of $(\frak X,g)$ with $g$ as in \eqref{E:metric defined} preserves $\mathcal F_{\mathbb H}$ and $\mathcal F_{\mathbb R}$.
\end{proposition} \label{P:foliations preserved}
\begin{proof} This will boil down to a computation in Riemannian geometry.  Specifically, we claim that the $2$--planes tangent to the leaves $\{\ast \} \times \mathbb R^2$ of $\mathcal F_{\mathbb R}$ have the property that the sectional curvatures in these $2$--planes are uniquely maximal among all $2$--planes.  More precisely, the sectional curvature in any one of these $2$--planes is $\frac{1}{2}$, and in any other $2$--plane it is strictly smaller.

To carry out such a computation, we can appeal to the {\em Mathematica} code detailed in \cite{FOW-mathematica}.  In particular, plugging the metric above in one can compute the Riemannian curvatures $R_{ijk\ell}(x,y,s,t)$ for $1 \leq i,j,k,\ell \leq 4$, and verify that the curvature of the plane $P_0$ spanned by $\tfrac{\partial}{\partial s}$ and $\tfrac{\partial}{\partial t}$ at $(0,1,0,0)$ is indeed $\frac{1}{2}$.  To prove that these planes uniquely maximize the sectional curvature, we note that for any other $2$--plane $P \subset T_{(0,1,0,0)}\frak X$, the orthogonal projection to the plane $P_1$ spanned of $\tfrac{\partial}{\partial x}$ and $\tfrac{\partial}{\partial y}$ is nontrivial.  We consider the two cases that the projection is an isomorphism, and when it maps onto a line separately.

When $P$ projects isomorphically to $P_1$, the plane is given by
\[ P = \left\langle \tfrac{\partial}{\partial x} + \eta \tfrac{\partial}{\partial t} + \rho \tfrac{\partial}{\partial s} \, , \,  \tfrac{\partial}{\partial y} + \lambda \tfrac{\partial}{\partial s} + \mu \tfrac{\partial}{\partial t} \right\rangle,\]
for some $\lambda,\mu,\eta,\rho \in \mathbb R$.
From {\em Mathematica} computations, one obtains that the sectional curvature in the plane $P$ is given by
\[ K(\eta,\rho,\lambda,\mu) =  \frac{-4-\eta^2-\rho^2-\mu^2-\lambda^2 + 6(\eta\mu-\rho\lambda) +2(\eta\mu-\rho\lambda)^2}{4(1+\eta^2+\rho^2 + \lambda^2+\mu^2 + (\eta\mu-\rho\lambda)^2)}. \]
Set $\eta = r\cos(\theta)$, $\rho = r \sin(\theta)$, $\lambda = u \cos(\phi)$, and $\mu = u \sin(\phi)$.  Then observe that
\[ \eta\mu-\rho\lambda = ru(\sin(\phi)\cos(\theta) - \cos(\phi)\sin(\theta)) = ru\sin(\phi-\theta),\]
so setting $\alpha = \phi-\theta$ we get
\begin{eqnarray*}  K(\eta,\rho,\lambda,\mu) & = & \frac{-4-r^2-u^2+6ru\sin(\alpha) + 2r^2u^2 \sin^2(\alpha)}{4(1+r^2+u^2+r^2u^2\sin^2(\alpha))}\\
& = & \frac{ 2 + 2r^2+2u^2 + 2r^2u^2\sin^2(\alpha) - 6-3r^2-3u^2+6ru\sin(\alpha)}{4(1+r^2+u^2+r^2u^2\sin^2(\alpha))}\\
& = & \frac12 - \frac{3(2+r^2+u^2-2ru\sin(\alpha))}{4(1+r^2+u^2+r^2u^2\sin^2(\alpha))}
\end{eqnarray*}
We want to see that the curvature above is less than $\tfrac12$.  
It suffices to show that the numerator in the second term of the last line is positive; that is, we must show that $2+r^2+u^2-2ru\sin(\alpha) >0$. For this,  observe that $2+r^2+u^2-2ru\sin(\alpha)$ is minimized when $ru\sin(\alpha)$ is maximized, hence equal to $ru$ (which occurs when $\alpha = \tfrac{\pi}2$).  In this situation, observe that we have $2+r^2+u^2-2ru\sin(\alpha) = 2+(r-u)^2$, and this is clearly positive.  Therefore $K(\eta,\rho,\lambda,\mu) < \tfrac12$.

When $P$ projects to a line, after applying an isometry fixing $(0,1)$ if necessary, we may assume that this line is spanned by $\tfrac{\partial}{\partial x}$.  Then $P$ is given by
\[ P =  \left\langle \tfrac{\partial}{\partial x} + \eta \tfrac{\partial}{\partial s} + \rho \tfrac{\partial}{\partial t} \, , \, \tfrac{\partial}{\partial s} + \lambda \tfrac{\partial}{\partial t} \right\rangle,\]
for some $\eta,\rho,\lambda \in \mathbb R$ or else
\[ P = \left\langle \tfrac{\partial}{\partial x} + \eta \tfrac{\partial}{\partial s} \, , \,  \tfrac{\partial}{\partial t} \right\rangle,\]
for some $\eta \in \mathbb R$.
The sectional curvatures are then respectively given by
\[ K(\eta,\rho,\lambda) =  \frac{-1-\lambda^2+2(\eta\lambda-\rho)^2}{4(1+\lambda^2 +(\eta\lambda-\rho)^2)} = \frac12 - \frac{3+3\lambda^2}{4(1+\lambda^2+(\eta\lambda - \rho)^2)},\]
and
\[ \quad K(\eta) =  \frac{2\eta^2-1}{4(1+\eta^2)} = \frac12 - \frac{3}{4(1+\eta^2)}.\]
These curvatures are all clearly less than $\tfrac12$.

Since the sectional curvature of the tangent planes to the leaves of $\mathcal F_{\mathbb R}$ uniquely maximize curvature, any isometry must preserve this $2$--plane field, and hence the foliation $\mathcal F_{\mathbb R}$.  Since the tangent planes to the leaves of $\mathcal F_{\mathbb H}$ are pointwise orthogonal to those of $\mathcal F_{\mathbb R}$, this plane field must also be preserved, and hence so is $\mathcal F_{\mathbb H}$.
\end{proof}

\begin{proof}[Proof of Proposition~\ref{P:computed isometry group}] The stabilizer of $(0,1,0,0)$ in $\frak G$ is $\OO(2)$, and so it suffices to show that this is the stabilizer in $\Isom(\frak X)$.  Fix any isometry $T \in \Isom(\frak X)$ in the stabilizer of $(0,1,0,0)$.  According to Proposition~\ref{P:foliations preserved}, $T$ preserves $\mathcal F_{\mathbb H}$ and $\mathcal F_{\mathbb R}$, it follows that $T((x,y),(s,t)) = (T_1(x,y),T_2(s,t))$, where $T_1 \in \Isom(\mathbb H^2)$ preserves $(0,1)$ and $T_2 \in \Isom(\mathbb R^2)$ preserves $(0,0)$.
Therefore, after composing with an element of $\OO(2) < \frak G$, we can assume that $T_1$ is the identity on $\mathbb H^2 \times \{(0,0)\}$, and thus $T((x,y),(s,t)) = ((x,y),T_2(s,t))$.  It suffices to show that $T_2(s,t) = \pm(s,t)$.  For this, note that for every $(x,y) \in \mathbb H^2$, the map $(s,t) \mapsto T_2(s,t)$ has to be an isometry from $\mathbb R^2 \cong \{(x,y)\} \times \mathbb R^2 $ to itself, with the induced metric from $\frak X$.  Varying over all $(x,y) \in \mathbb H^2$, the metrics on $\mathbb R^2$ vary over all unit area affine deformations of the standard metric, and thus $T_2$ has to be an isometry with respect to all such metrics.  This readily implies $T_2(s,t) = \pm (s,t)$, as required, completing the proof.
\end{proof}

We note in passing that all $\frak G$--invariant Riemannian metrics on $\frak X$ are obtained by applying independent, global scaling factors to the leaves of $\mathcal F_{\mathbb R}$ and the leaves of $\mathcal F_{\mathbb H}$ (thus producing a $2$--parameter family of invariant metrics).  To see this, we note that $\frak G$ acts transitively on the projective tangent bundles to each of the foliations $\mathcal F_{\mathbb R}$ and $\mathcal F_{\mathbb H}$, and so a $\frak G$--invariant metric on the leaves of each of these two foliations are determined by the norm of a single non-zero tangent vector to a leaf at any point.  Furthermore, the stabilizer of a point contains an involution that fixes the leaf of $\mathcal F_{\mathbb H}$ through the point, and acts as $-\rm{Id}$ on the leaf of $\mathcal F_{\mathbb R}$ through that point, and thus the foliations must be orthogonal.

\subsection{Application to $E$}

The geometry $(\frak G,\frak X)$ locally models the geometry of $E$ as described in \S\ref{sec:bundle_metrics}, away from the singular locus (including any orbifold locus).  The next proposition immediately implies Theorem~\ref{T:GXmanifold}.

\begin{proposition} \label{P:E is singular locally homogeneous} Every nonsingular point of $E$ has a neighborhood which is isometric to an open set in $\frak X$ (with $\kappa = 4$). Moreover, such an isometry sends intersections with horizontal disks of $E$ into leaves of $\mathcal F_{\mathbb H}$ and intersections with fibers $E_X \subset E$ into leaves of $\mathcal F_{\mathbb R}$.
\end{proposition}
\begin{proof} Write $E = D \times E_0$.  Consider any nonsingular point $x \in E_0$ and consider a coordinate chart $\zeta \colon U \to \mathbb R^2$ about $x$ for $q$.  Define $\phi \colon D \times U \to \mathbb H^2 \times \mathbb R^2 = \frak X$ by
\[ (A \cdot (X,q),y) \mapsto (A^{-1}(0,1),\zeta(y)). \]
We claim that $\phi$ is a local isometry.  To see this, note that the restriction to each $D \times \{y\}$, for $y \in U$ is an isometry to the leaf $\mathbb H^2 \times \{\zeta(y)\}$ of $\mathcal F_{\mathbb H}$.  We therefore need to show that for all $A \in \SL_2(\mathbb R)$, we have $(A \cdot (X,q),y) \mapsto \zeta(y)$ maps isometrically into the leaf $\{A^{-1}(0,1)\} \times \mathbb R^2$ of $\mathcal F_{\mathbb R}$.  For this, first note that the metric on $\{A \cdot (X,q)\} \times U$ is such that $A \circ \zeta \colon U \to \mathbb R^2$ is an isometry onto its image (with respect to the standard metric on $\mathbb R^2$).  On the other hand, the metric on $\{A^{-1}(0,1)\} \times \mathbb R^2 \subset \frak X$ is such that $(A^{-1}(0,1),s,t) \mapsto A (s,t)$ is an isometry to $\mathbb R^2$.  Precomposing this isometry with $\phi$ we have
\[ (A \cdot (X,q),y) \stackrel{\phi}{\mapsto} (A^{-1}(0,1),\zeta(y)) \mapsto A (\zeta(y)) = A \circ \zeta(y), \]
which as noted above, is an isometry.  Therefore $\phi$ is an isometry.
\end{proof}
In the sequel \cite{DDLSII}, we study the isometry groups of $E$ and $\bar E$, which we denote $\Isom(E)$ and $\Isom(\bar E)$, respectively.  Since these spaces fiber over $D$ and $\bar D$, the isometry groups contain subgroups consisting of the fiber-preserving isometries which we denote $\Isom_\fib(E)$ and $\Isom_\fib(\bar E)$.  
In particular, we will need the following, which is an immediate corollary of Proposition~\ref{P:foliations preserved} and Proposition~\ref{P:E is singular locally homogeneous}.
\begin{corollary} 
\label{cor:isoms_preserve_fibers}
We have $\Isom(E) = \Isom_{\fib}(E)$ and $\Isom(\bar E) = \Isom_{\fib}(\bar E)$.
\end{corollary}

We will also use the following fact.
\begin{proposition}
\label{Prop:isom_acts_properly_discontinuously}
The isometry group $\Isom(E)$ acts properly discontinuously on $E$, and likewise for $\Isom(\bar E)$ on $\bar E$.
\end{proposition}
\begin{proof} Let $H = \Isom(E) = \Isom_{\fib}(E)$ or $\Isom(\bar E) = \Isom_{\fib}(\bar E)$, and $Z = E$ or $\bar E$, respectively.   Any isometry of $Z$ preserves the singular locus, and we can thus view $H$ as the isometry group of the Riemannian manifold $Z_0$, obtained by removing this set from $Z$.  It follows that $H$, with the topology of pointwise convergence is in fact a Lie group; in fact, for any five points of $Z_0$ sufficiently close together (and not contained in a lower dimensional geodesic submanifold), the orbit map from $H$ to $Z_0^5$ is an embedding onto a closed submanifold; see \cite{MS39}.  From this it is straightforward to see that $H$ acts properly discontinuously on $Z$ if and only if it is a discrete group; that is, if and only if the identity is isolated.

Now suppose $\{T_n\} \subset H$ is any sequence converging to the identity and we must show that $T_n$ is eventually the identity.  Since the $T_n$ are all fiber preserving, they descend to isometries of $D$ (or $\bar D$, if $Z = \bar E$) and restrict to isometries on each fiber.  The set of lengths of saddle connections on a given fiber is a closed discrete subset of $\mathbb R$ which locally determines the fiber.  Thus, since $T_n$ is converging to the identity, the descent to $D$ (or $\bar D$) must eventually be equal to the identity.  It follows that for $n$ sufficiently large, $T_n$ restricts to an isometry of $E_0$ to itself.  Since isometries of $E_0$ must send the discrete set of cone points to itself, it must eventually be the identity on three cone points that span a Euclidean triangle, and hence the identity on all of $E_0$.  Finally, once $T_n$ is the identity on $D$ (or $\bar D$) and on the fiber $E_0$, then it must be the identity on a neighborhood of $E_0$, and by the discussion of the topology above, it is the identity on all of $Z$.
\end{proof}

A consequence of the proof above is the following, which we also record for use in \cite{DDLSII}.

\begin{corollary} \label{C:identity on leaves is the identity} Any isometry which is the identity on some fiber $E_X$ and some disk $D_x$ is the identity.
\end{corollary}

\subsection{Relationship with {\bf Sol}}
\label{S:rel_to_SolV}

The geometry $(\frak X,\frak G)$ is closely related to Thurston's {\em solvgeometry}, {\bf Sol}.  More precisely, $\frak X$ fibers over $\mathbb H^2$, and the bundle over any geodesic line in $\mathbb H^2$ is a model for {\bf Sol}; see \cite[\S3.8]{Thurston-book}.  To be a geometry (in the sense of Thurston), one needs a finite volume quotient, which is provided by the lattice $\mathbb Z^2 \rtimes \SL_2(\mathbb Z) < \frak G$.  The quotient is the universal (orbifold) torus bundle over the moduli space $\mathcal M_1 = \mathbb H^2/\PSL_2(\mathbb Z)$ of genus $1$ Riemann surfaces, and the compact, orientable {\bf Sol} torus bundles all arise as bundles over closed geodesics into $\mathcal M_1$; see \cite[Example~3.8.9]{Thurston-book}.  The space $E/\Gamma$ is a singular $(\frak X,\frak G)$--space that fibers over $D/G$, and one similarly obtains a singular {\bf Sol} structure on the bundles over closed geodesics in $D/G$.

\appendix

\section{Notation Index} \label{S:notation}

\begin{itemize}
\item $X_0$, the fixed complex structures on $S$, and base point in the Teichm\"uller disk $D$, \S\ref{S:setup}, page~\pageref{ind:cx str}.
\item $q$, fixed flat metric/quadratic differential, \S\ref{S:setup}, page~\pageref{ind:q diff}.
\item $D = D_q$, the Teichm\"uller disk defined by $q$, with Poincar\'e metric $\rho$, \S\ref{S:setup}, page~\pageref{ind:disk}.
\item $X,q_X$, generic point in $D$ and associated flat metric, \S\ref{S:setup}, page~\pageref{ind:gen point}.
\item $G = G_q < \Mod(S)$, the lattice Veech group of $q$, \S\ref{S:setup}, page~\pageref{ind:Veech group}.
\item $\CP = \CP(q) \subset \mathbb P^1(q)$, the set of parabolic directions on $q$, \S\ref{S:setup}, page~\pageref{ind:par dir}.
\item $\{B_\alpha\}_{\alpha \in \CP}$ the $1$--separated horoballs, each invariant by the associated parabolic subgroup, \S\ref{sec:horoballs}, page~\pageref{ind:horoballs}.
\item $c_\alpha \colon D \to B_\alpha$, the $\rho$--closest-point projection map, \S\ref{sec:horoballs}, page~\pageref{ind:closest point}.
\item $p \colon D \to \hat D$, $\rho$,  the quotient by collapsing horoballs and its quotient metric, \S\ref{sec:horoballs}, page~\pageref{ind:D collapse}.
\item $\bar D \subset D$, $\bar \rho$, the truncated Teichm\"uller disk obtained by removing all $B_\alpha$ and its induced path metric \S\ref{sec:horoballs}, \pageref{ind:D trunc}.  See also diagram \ref{Eq:diagram of maps}.
\item $\pi \colon E \to D$, the $\widetilde S$--bundle over $D$, \S\ref{sec:bundles-total-space}, page~\pageref{ind:E}.  See also diagram \ref{Eq:diagram of maps}, page~\pageref{Eq:diagram of maps}.
\item $\Gamma$, the extension group of $G$, \S\ref{sec:bundles-total-space}, page~\pageref{ind:Gamma}.  See also diagram \ref{Eq:diagram of maps}, page~\pageref{Eq:diagram of maps}.
\item $\mathcal B_\alpha = \pi^{-1}(B_\alpha)$, for $\alpha \in \CP$, \S\ref{sec:bundles-total-space}, page~\pageref{ind:horobundle}.
\item $\bar E = \pi^{-1}(\bar D)$, the truncated bundle, \S\ref{sec:bundles-total-space}, page~\pageref{ind:E trunc}.
\item $E_X \subset E$, the fiber over $X \in D$, \S\ref{sec:moving_between_fibers}, page~\pageref{ind:gen fib}.
\item $E_0 = E_{X_0}$, the ``base fiber" in $E$, \S\ref{sec:moving_between_fibers}, page~\pageref{ind:base fib}.
\item $f_{X,Y} \colon E_Y \to E_X$, the map between fibers over $Y$ and $X$, \S\ref{sec:moving_between_fibers}, page~\pageref{ind:fib map}.
\item $f_X \colon E \to E_X$, $f_0 \colon E \to E_0$, assembled maps to the fiber, \S\ref{sec:moving_between_fibers}, page~\pageref{ind:ass map}.
\item $D_x = f_X^{-1}(x)$, $\bar D_x = D_x \cap \bar E$, the horizontal disk, and truncated disk, through $x \in E_X$, \S\ref{sec:moving_between_fibers}, page~\pageref{ind:horiz D}.
\item $f_\alpha \colon \bar E \to \partial \mathcal B_\alpha$, horizontal closest point projection, \S\ref{sec:moving_between_fibers}, page~\pageref{ind:horiz closest}.
\item $T_\alpha$, the tree dual to foliation of $E_0$ in direction $\alpha \in \CP$, \S\ref{S:tree discussion}, page~\pageref{ind:tree}.
\item $t_\alpha \colon E \to T_\alpha$, the projection to the tree, \S\ref{S:tree discussion}, page~\pageref{ind:tree proj}.
\item $\Sigma_X \subset E_X$, $\Sigma \subset E$, $\bar \Sigma = \Sigma \cap E$, cone points in a fiber, and their union in $E$ and $\bar E$, \S\ref{S:cone points}, page~\pageref{ind:cone points}.
\item $P \colon E \to \hat E$, $\bar P \colon \bar E \to \hat E$, quotient maps collapsing $\mathcal B_\alpha$ to $T_\alpha$, \S\ref{sec:bundle_metrics}, page~\pageref{ind:E quots}, see also diagram \ref{Eq:diagram of maps}, page~\pageref{Eq:diagram of maps}.
\item $d_\alpha$, $d$, $\bar d$, $\hat d$, metrics on $T_\alpha$, $E$, $\bar E$, $\hat E$, respectively, \S\ref{sec:bundle_metrics}, page~\pageref{ind:metrics}.
\item $\vtx \subset \hat E$, the union of all vertices of all trees, \S\ref{sec:bundle_metrics}, page~\pageref{ind:vtx}.
\item $B_v = B_{\alpha(v)}$, $\mathcal B_v = \mathcal B_{\alpha(v)}$, for $v \in \vtx$ in the tree $T_{\alpha(v)}$, \S\ref{sec:bundle_metrics}, page~\pageref{ind:alpha(v)}.
\item $\bar \pi \colon \bar E \to \bar D$, $\hat \pi \colon \hat E \to \hat D$, associated projection maps, \S\ref{sec:bundle_metrics}, page~\pageref{ind:other proj}, see also diagram \ref{Eq:diagram of maps}, page~\pageref{Eq:diagram of maps}.
\item $\theta^v_X \subset \bTheta^v_X \subset E_X$ the $v$--spine and thickened $v$--spine, for $v \in \vtx$, \S\ref{S:spines}, page~\pageref{ind:spines}.
\item $\theta^v,\bTheta^v \subset \partial \mathcal B_v$, the $v$--spine bundle and thickened $v$--spine bundle over $\partial B_v$, for $v \in \vtx$, \S\ref{S:spines}, page~\pageref{ind:spine bundles}.
\item $\varsigma(x,y)$, $\hat \varsigma(x,y) = P(\varsigma(x,y))$, preferred path and collapsed preferred path between $x,y \in \Sigma \subset E$, \S\ref{S:preferred paths}, Equation~\ref{Eq:pref path}, page~\pageref{ind:pref path}.
\item $\Delta(x,y,z) = \Delta(f(x),f(y),f(z)) = [f(x),f(y)] \cup [f(y),f(z)] \cup [f(z),f(x)] \subset E_0$, reference triangle for $x,y,z \in \Sigma$, \S\ref{S:nondeg triangles}, page~\pageref{ind:reference triangles}.
\item $\Delta^\varsigma(x,y,z)$, $\Delta^{\hat \varsigma}(x,y,z)$, the preferred path and collapsed preferred path triangles for $x,y,z \in \Sigma$, \S\ref{S:nondeg triangles}, page~\pageref{ind:pref ref triangles}.
\item $\CP(\sigma) \subset \CP$ are directions of all saddle connections that span a non-degenerate triangle with the saddle connection $\sigma$, \S\ref{S:fans}, page~\pageref{ind:spanning directions}.
\item $B(\sigma)$, $\mathcal B(\sigma)$, the horoballs and bundles over them, associated to the directions in $\CP(\sigma)$, \S\ref{S:fans}, page~\pageref{ind:horo spans}.
\item $\frak G = \SAff^\pm(\mathbb R^2) = \mathbb R^2 \rtimes \SL^\pm_2(\mathbb R)$ and $\frak X = \frak G/\frak K$, area preserving affine group of $\mathbb R^2$ and associated homogeneous space, \S\ref{S:homogeneous}, page~\pageref{ind:sp affine}.
\end{itemize}


\bibliographystyle{alpha}
\bibliography{main}

\end{document}